\newcommand{\dt}{\partial_{t}}
\newcommand{\dx}{\partial_{x}}
\newcommand{\dy}{\partial_{y}}
\newcommand{\RR}{\mathbb{R}}
\newcommand{\NN}{\mathbb{N}}
\DeclareMathOperator{\dive}{div}
\newcommand{\ddx}{\text{dx}}
\newcommand{\ddy}{\text{dy}}
\newcommand{\ddt}{\text{dt}}
\DeclareMathOperator{\be}{\bar \epsilon^2}
\newcommand{\hh}{\bar h}
\newcommand{\bK}{\bar K}
\DeclareMathOperator{\ba}{\bar a_1}
\DeclareMathOperator{\baa}{\bar a_2}
\DeclareMathOperator{\bb}{\bar b_1}
\DeclareMathOperator{\bbb}{\bar b_2}
\newcommand{\hk}{H^k}
\newcommand{\hkun}{H^{k+1}}
\newcommand{\lhk}{L^2_T(H^k)}
\newcommand{\lhkde}{L^2_T(H^{k+2})}
\newtheorem{theorem}{Theorem}[section]
\newtheorem{proposition}{Proposition}[section]
\newtheorem{remark}{Remark}[section]
\title{Well-posedness and stability analysis\\ of a landscape evolution model}
\date{}
\author[1]{Julie Binard}
\author[2]{Pierre Degond}
\author[1]{Pascal Noble}
\affil[1]{Institut de Mathématiques de Toulouse; UMR 5219\\ 
Université de Toulouse; CNRS \\
INSA, F-31077 Toulouse, France\\
email JB: binard@insa-toulouse.fr\\
email PN: noble@insa-toulouse.fr
}
\affil[2]{Institut de Mathématiques de Toulouse; UMR 5219\\ 
Université de Toulouse; CNRS \\
UPS, F-31062 Toulouse Cedex 9, France\\
email: pierre.degond@math.univ-toulouse.fr
}
\begin{document}

\maketitle

\begin{abstract}
In this paper, we study a system of partial differential equations modeling the evolution of a landscape  in order to describe the mechanisms of pattern formations. A ground surface is eroded by the flow of water over it, either by sedimentation or dilution. We consider a  model, composed of three evolution equations, one on the elevation of the ground surface, one on the fluid height and one on the concentration of sediments in the fluid layer. We first establish the well-posedness of the system in short time, and under the assumption that the initial fluid height does not vanish. Then, we focus on pattern formation in the case of a film flow over an inclined erodible plane. For that purpose, we carry out a spectral stability analysis of constant state solutions in order to determine instability conditions and identify a mechanism for pattern formations. These patterns, which are rills and gullies, are the starting point of the formation of rivers and valleys in landscapes. Finally, we carry out some numerical simulations of the full system in order to validate the spectral instability scenario, and determine the resulting patterns.
\end{abstract}

\paragraph{Keywords}
Pattern formation, instabilities,
Landscape evolution model, erosion by water, stream incision law, sedimentation.

\paragraph{Mathematics Subject Classification}
35M30, 35Q86, 35B35, 35B36, 35A01, 86-10

\section{Introduction}

The modeling of landscape evolution under the effect of water flow has received increasing attention these last decades. Several physical phenomena must be considered in order to describe the erosion and sedimentation processes, which occur when water flows over an erodible surface. The erosion is the removal of sediments from the soil by a fluid, and the sedimentation is the inverse process, when the sediments settle on the surface. Heuristically, the intensity of the erosion process is strongly related to the flow rate of the fluid, the erosion rate is higher when the flow rate is higher. An other process that affects landscape evolution is the creep effect, which tends to smooth the bottom surface on time scales that are much larger than the ones involved in sediment transport. This effect has multiple causes, such as gravitational forces acting on the soil, and corrosion and dilatation caused by physical and chemical factors. It can be described as a simple diffusion process of the soil. The creep effect was introduced in 1892, in \cite{davis_convex_1892}. It was used in \cite{gilbert_convexity_1909} to explain the convexity of hilltops profile. Later, in 1963 a derivation of the soil creep effect as a limit of a stochastic process had been done in \cite{culling1963soil}. The stochastic effect models soil particles which follow a random motion, under the constrain of gravity. In this article, Culling writes "Soil creep \textemdash so gradual as to be imperceptible \textemdash appears to be the result of the persistent effect of molecular and macromolecular forces tending to displace the soil particles.", and he describes the soil flow as a quasi-viscous fluid.\\

The description of the geological processes that occur in landscape evolution was initiated in \cite{gilbert_report_1877}. In this book, the author gives the fundamental principles of landscape evolution and explains  why the profiles of stream beds are concave upwards. This concavity property had been illustrated later in \cite{culling1960analytical}, with a mathematical description of the erosion of a slope. The slope evolution is described with a reaction diffusion equation, the exponential source term representing the erosion rate. The landscapes evolve mainly in function of these two competitive factors: creep on the hilltops, and stream incision on the lower slopes. Indeed, in the upper slopes the water flow is weak and dispersive, thus the creep effect predominates. Downward, the effect of shear stress of the flow becomes dominant, and bedload transport process increases. This leads to the formation of stream beds and valley, the profiles being concave. Since then, the complexity of the landscape evolution models increased: see \cite{chen_equations_2014} and \cite{chen_landscape_2014} for a review of these models.

The process of erosion and transport of sediments can be described by two different laws, the alluvial transport law or the stream incision law, which depends on the nature of the soil. When the soil is covered by alluviums, the sediments are directly transported by water flux, and the amount of sediments moved by the water flux in a given time $q_s$ follows a law that depends on the water discharge $q$. 
This transport discharge law is described, for example, in \cite{smith_stability_1972} and is given by 
\begin{align}
     q_s = k q^m |\nabla z|^n
\end{align}
where $k$, $m$, $n$ are constants. This is called the transport limited case.

On the other side, the stream incision law is used when the surface is bedrock, because in this case the sediment transport is limited by the resistance of the bedrock to the shear stress caused by the water flux. This is modeled by an erosion source term in the equation of surface height evolution, as described in \cite{howard_channel_1983}. The sediments removed from the soil by the erosion are supposed to be dissolved in water, thus are transported by the water flux. This stream incision law is described below, in Section~\ref{sec_model}. This case, that we study in this paper is called the detachment limited case.\\

In this paper, we study a system of partial differential equations describing the erosion of the soil by water, which had been proposed in \cite{chen_equations_2014}, Section 4.
It includes the modeling of the water flow, the erosion of the surface by water, the transport and deposition of sediments, and the creep effect. The characteristic fluid velocity is usually much larger than the erosion rate: in order to describe pattern formation, we only consider large scale fluctuations of the fluid velocity. As a result, we assume that the fluid velocity is proportional to the gradient of the free surface elevation.The other physical principles taken in account are:
\begin{itemize}
     \item The conservation law for water and sediments dissolved in water,
     \item The stream incision law: the erosion grows with the water speed and the water height,
     \item The sedimentation rate is proportional to the concentration of sediments in water,
     \item The creep effect: the soil is subject to a diffusion process.
\end{itemize}
Other effects, such as infiltration, vegetation, wind, or ice formation are neglected.  Moreover, we suppose that the bottom surface is constituted by one type of sediments.
The system studied in this article models the time evolution of the soil and of the fluid. This system is composed by the following three partial differential equations:
\begin{equation}
\left\{ \begin{array}{lll}
\displaystyle
\dt h - \dive(h \nabla(h+z)) = r(t,x),\vspace{2mm}\\
\displaystyle
\dt z = K \Delta z + sc - e h^m |v|^n,\vspace{2mm}\\
\displaystyle
\dt (ch) + \dive(chv) = e h^m |v|^n - sc.
\end{array} \right.
\label{syst-intro}
\end{equation}

The first equation is a mass conservation law for the fluid and describes the evolution of the fluid height, denoted by $h$, and the fluid is transported at speed $v := -\nabla(h+z)$. The term $r$ is a source term representing an exterior source of water, like the rain. 
The second equation models the time evolution of the bottom topography (denoted by $z$). The constant $K$ is the constant of creep whereas $e$ and $s$ are respectively the constants for the incision law and the sedimentation rate. As in \cite{howard_channel_1983}, we will suppose that the incision law depends on a power of the norm of the water velocity, and on a power of the water height. Thanks to this hypothesis, as long as the water height and velocity do not vanish, the erosion rate is positive. However, in practice the erosion starts if the water velocity is high enough to break the cohesion of the soil. Thus a threshold effect could be introduced in the model. This effect could be easily added to the numerical scheme of the model, but it would increase significantly the difficulty of its mathematical study. Therefore we choose to ignore the threshold effect in the erosion process. This approximation is justified in the regime that we study, because the water height and water velocity are not close to zero.
The last equation is the conservation equation for the sediments dissolved in the fluid. This concentration, denoted by $c$ is the average of the concentration over the height of the fluid, thus is given in gram per square meter. The right hand side is the source term, which represents the exchange of sediments between the ground and the fluid, caused by the erosion and the sedimentation. 
The model is set in two dimensions, the variables are the time $t \in \RR^+$ and the position $(x,y)$, which belongs to a domain $\Omega \subset \RR^2$. \\

 In \cite{chen_equations_2014}, the authors reviewed various landscape evolution models and focused on system~\eqref{syst-intro}. 
They listed  some open mathematical problems like local existence in time, regularity of solutions or stability. They performed a numerical study on the erosion of a gaussian shaped hill to demonstrate the ability of the model to exhibit pattern formation and studied the influence of some parameters on the complexity of these patterns. In \cite{lebrun_numerical_2018}, the authors performed numerical simulations in a more practical context where the initial topography is a real one like the ones found in La Reunion or Madeira islands: they took a particular care of the visualization of landscape evolution through a particular colorization process. The simulations show the capacity of the model to describe a realistic evolution in time of the rivers and gullies on these landscapes, provided its parameters are properly chosen.
Note that the detachment-limit hypothesis is valid for bedrock rivers and must be appropriately modified for alluvial rivers, where sediments form a layer of loose material. In this latter case, other classes of models may be considered like the shallow water equations coupled with Exner equations for the transport of sediments: see  \cite{fernandez2017formal}  for a formal derivation of these models from bi-layer type models (one layer for the fluid and one layer for the sediment) and \cite{fernandez2014influence, escalante2021modelling} for various numerical strategies to perform simulations of bedload sediment transport.
The aim of this paper is to explore the mechanism of pattern formation in the soil caused by water flow, and to precise the form and the frequency of apparition of these patterns. In landscapes, these patterns are channels, bed rivers, valley. 
We will focus on a simple framework where the initial bottom surface is a tilted plane. This surface is covered by a layer of water, the water flow from the top of the plane. The situation is represented in Figure~\ref{experience}, where the tilted plane is seen from the side.

\begin{figure}[ht]
     \begin{tikzpicture}[scale = 0.87]
     \draw[black, ->] (0,0) -- (14.6,0);
     \draw[black] (14.6,-0.3) node {$x$};
     
     \draw[black, ->] (0,0) -- (0,4.5);
     \draw[black] (-0.3,4.3) node {$z$};
     
     \draw[dashed, ->] (0,0) -- (2,1);
     \draw[black] (1.8,1.2) node {$y$};
     
     \draw[blue] (0,4) -- (13.7,0.4);
     \draw[blue] (0,4) -- (3,5.5);
     \draw[blue] (13.7,0.4) -- (16.7,1.9);
     \draw[blue] (3,5.5) -- (16.7,1.9);
     
     \draw[brown] (0,3.6) -- (13.7,0);
     \draw[brown, dashed] (0,3.6) -- (3,5.1);
     \draw[brown, dashed] (13.7,0) -- (16.7,1.5);
     \draw[brown, dashed] (3,5.1) -- (16.7,1.5);
     
     \draw[red, <->] (14,0) -- (16.9,1.45);
     \draw[red] (16,0.5) node {$L_y$};
     
     \draw[red, <->] (0,-0.7) -- (13.7,-0.7);
     \draw[red] (7,-1) node {$L_x$};
     
     \draw[red, ->] (12.5,0) -- (12.55,0.258);
     \draw[red] (12,0.19) node {$\theta$};
          
     \draw[dashed] (5, 0.4) -- (5, 3.2);
     \draw (4.5,0.5) node {$(x,y)$};
     \draw[brown] (5.8, 2.8) node {$z(x,y)$};
     \draw[blue] (5.3, 3.6) node {$(z+h)(x,y)$};
     \fill (5,0.4) circle[radius=1pt];
     \fill[blue] (5,3.2) circle[radius=1pt];
     \fill[brown] (5,2.8) circle[radius=1pt];
     
     \draw[red, ->] (12.5,0) -- (12.55,0.258);
     \draw[red] (12,0.19) node {$\theta$};
     
     \draw[blue, ->] (0,3.8) -- (0.6,3.65);
     \draw[blue, ->] (7.7,1.75) -- (8.3,1.6);
     
     \end{tikzpicture}
     \caption{Sketch of the mathematical framework for stability analysis: the water is flowing on a tilted plane.}
     \label{experience}
\end{figure}
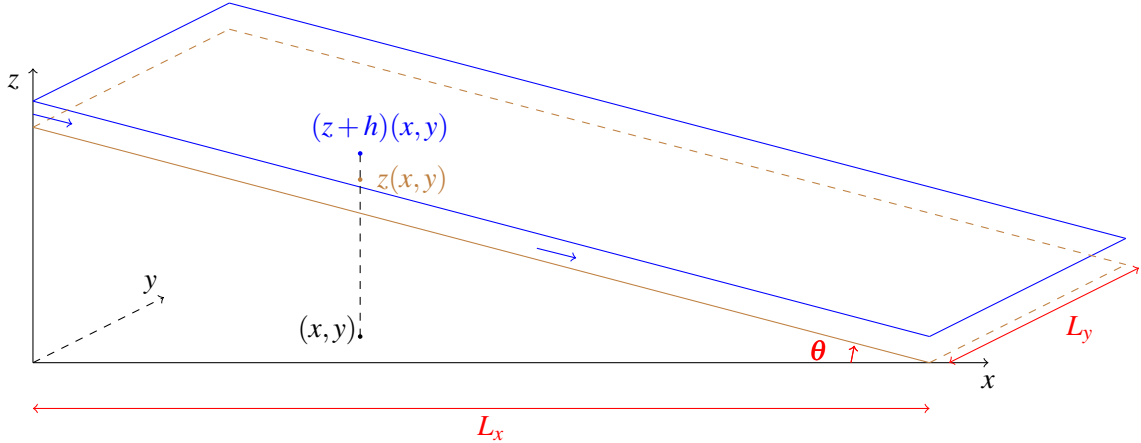

Although it is a less realistic framework than the landscape evolution described in \cite{lebrun_numerical_2018}  where real initial topographies are considered, it is sufficient to study the emergence of patterns on the surface. Note that experiments of erosion on a flat surfaces were conducted in \cite{guerin_streamwise_2020} with a thin film flow which erodes  blocks of plaster or salt. \\

The main outcomes of this paper are the following:
\begin{itemize}
\item We establish the well-posedness of System~\eqref{syst-intro} when the initial fluid height does not vanish.
\item We study the spectral stability of constant states when the bottom is an inclined plane and characterize rigorously the spectral instability through a non dimensional number, so called channelization index. 
\item We perform direct numerical simulations of System~\eqref{syst-intro} for realistic experimental data and on time scales associated to the erosion phenomenon. We tackle the problem of severe CFL restrictions due to very short time scales associated to the fluid flow.
\end{itemize}

We provide thereafter a short description of our main results.
First, we prove the well posedness character of System~\eqref{syst-intro} locally in time under some assumptions on the data and the parameters. We prove the existence of solutions in suitable Sobolev spaces. In order to simplify notations, we will denote for any $T>0$ and $\mathcal{H}$ a Hilbert space
$$
\displaystyle
L^2_T(\mathcal{H}):=L^2((0, T); \mathcal{H});\quad C_T(\mathcal{H}):=C((0, T);\mathcal{H})
$$
the space of functions $u:t\mapsto u(t,.)\in\mathcal{H}$ that are respectively $L^2$ integrable or continuous in time on the interval $(0, T)$. We prove the following result:
\begin{theorem}
\label{th}
Let $m>0$, $n>3$ or $n=2$, $K>0$, and $T_0>0$. Let us fix two constants fluid heights $h_{ref}>h_{min}>0$. Suppose that the initial data $h^0$, $z^0$, $c^0$ satisfy
$$h^0-h_{ref}, \, z^0 \in H^{k+1}(\RR^2), \quad c^0 \in \hk,\quad h^0(x) \geq 2\,h_{min} \, \forall x \in \RR^2.$$
with $k=3$. Suppose that $r \in L^2_{T_0}(\hk) $, $K\,h_{min} - ||h^0||_{L^{\infty}}^2 \geq  0$.
Then there exists $0 < T < T_0$ such that System~\eqref{syst-intro} admits a unique solution $(h,z,c)$ with 
$$h-h_{ref}, \, z \in \lhkde \cap C_T(\hkun), \quad c \in C_T(\hk).$$
\end{theorem}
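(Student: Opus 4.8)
The plan is to exploit the mixed parabolic--hyperbolic structure of the system, which becomes transparent after a reformulation. First I would set $u:=h-h_{ref}$, so the first unknown lies in $\hkun$, and expand the quasilinear flux as $\dive(h\nabla(h+z))=h\Delta h+|\nabla h|^2+\nabla h\cdot\nabla z+h\Delta z$, exhibiting the $h$-equation as uniformly parabolic once $h\ge h_{min}$. Using the first equation in the form $\dt h+\dive(hv)=r$, the sediment equation collapses to the nonconservative transport equation
\begin{equation}
\dt c+v\cdot\nabla c=\frac{1}{h}\bigl(e\,h^m|v|^n-sc-cr\bigr).
\end{equation}
This explains the regularity split in the conclusion: $h$ and $z$ solve quasilinear, resp.\ semilinear, parabolic equations and gain two derivatives in $\lhkde$, while $c$ solves a first-order transport equation and only retains its initial $\hk$ regularity.

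Second, I would set up a Picard-type scheme by freezing coefficients. Given a previous iterate $(\bar h,\bar z,\bar c)$, the next one is obtained by solving a \emph{linear} parabolic equation for $h$ with diffusion coefficient $\bar h$, the linear parabolic equation for $z$, and the linear transport equation for $c$ with velocity $\bar v=-\nabla(\bar h+\bar z)$ (solved by the standard linear theory). The natural setting is a ball centred on the data in
$$X_T:=\{(h,z,c):\ h-h_{ref},\,z\in\lhkde\cap C_T(\hkun),\ c\in C_T(\hk)\},$$
for a short time $T$ to be fixed at the end.

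Third come the a priori estimates. For the parabolic pair I would run an $\hkun$ energy estimate, integrating by parts to expose the two diffusions. Uniform parabolicity of the $h$-equation rests on $h\ge h_{min}$, which yields dissipation at rate $\gtrsim h_{min}$; the coupling term $\dive(h\nabla z)$ then acts as a forcing in the $h$-equation whose absorption by Young's inequality costs a factor $\|h\|_{L^\infty}^2/h_{min}\le\|h^0\|_{L^\infty}^2/h_{min}$, and this must be paid for by the dissipation rate $K$ furnished by the $z$-equation — which is precisely the content of the hypothesis $Kh_{min}-\|h^0\|_{L^\infty}^2\ge0$. Maximal parabolic regularity then closes the estimate with the gain to $\lhkde$. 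For the transport equation I would use an $\hk$ energy estimate with Kato--Ponce commutator bounds, which close because $v=-\nabla(h+z)\in C_T(\hk)\hookrightarrow C_T(W^{1,\infty}(\RR^2))$, valid since $k=3>2$ in dimension two. The nonlinear source $e\,h^m|v|^n$ is controlled in $\hk$ by Moser product and composition estimates; here the hypothesis $n>3$ or $n=2$ guarantees that $w\mapsto|w|^n$ is smooth enough (polynomial when $n=2$, and $C^k$ near the origin when $n>3$, with $k=3$) to be estimated in $H^3$, while the factor $1/h$ is handled using $h\ge h_{min}$ and the algebra property of $H^3(\RR^2)$.

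Finally, I would close the scheme: a self-map estimate shows the iteration preserves the ball for $T$ small, and a contraction estimate taken one derivative below, in $\lhkun\times\lhkun\times C_T(H^{k-1})$ (the usual device to avoid the derivative loss), gives convergence to a fixed point, i.e.\ a solution. The lower bound $h\ge h_{min}$ propagates on a short interval because $h-h_{ref}\in C_T(\hkun)\hookrightarrow C_T(C^0)$ together with $h^0\ge 2h_{min}$, which keeps the $h$-equation uniformly parabolic and $1/h$ bounded; uniqueness follows from the same contraction estimate applied to the difference of two solutions and Gr\"onwall's lemma. The main obstacle is the tightness of the derivative budget: the transport equation neither gains nor should lose regularity, and closing its $\hk$ estimate requires exactly $v\in\hk$, i.e.\ $(h,z)\in\hkun$ — the very regularity supplied by the parabolic part — so the coupling has to be balanced with care, and the condition $Kh_{min}\ge\|h^0\|_{L^\infty}^2$ is what permits the two parabolic equations to be estimated jointly in spite of the destabilising cross term $\dive(h\nabla z)$.
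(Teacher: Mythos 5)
Your proposal follows essentially the same route as the paper: the same reformulation into two parabolic equations plus a transport equation for $c$, a Picard iteration with frozen coefficients, $H^{k+1}/H^k$ energy estimates in which the cross term $\dive(h\nabla z)$ is absorbed thanks to $Kh_{min}\ge\|h^0\|_{L^\infty}^2$ (the paper phrases this as positivity of the quadratic form $Ka^2+h_{min}b^2-\|h\|_{L^\infty}ab$), control of $h^m|v|^n$ via the smoothness of $w\mapsto|w|^n$ for $n>3$ or $n=2$, propagation of $h\ge h_{min}$ by continuity, and a contraction in a lower-regularity norm combined with interpolation and Gr\"onwall for convergence and uniqueness. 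The only cosmetic differences are that the paper runs the contraction all the way down in $L^2$ rather than one derivative below, and uses direct Leibniz expansions and energy dissipation rather than invoking Kato--Ponce or maximal parabolic regularity.
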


The proof of Theorem~\ref{th} is based on energy estimates and on a fixed point argument. The problem of the well posedness of this system was raised in the conclusion of \cite{chen_equations_2014}. We prove that it is well-posed locally in time, and under the assumption that the fluid height does not vanish. With this assumption, the equations on water height and water concentration are parabolic, and this property is used in the proof, for the energy estimates.This validates the model in the situation where the fluid height is close to a constant, hence, far from vanishing. This will be the case in our stability study, and our numerical simulations. The case of vanishing fluid height is more involved as the model becomes degenerated parabolic and the regularizing effect on the fluid height, and thus on the fluid velocity, is lost: one has to consider weaker classes of solutions. More precisely, dropping the $z$ dependence, the equation on $h$ reads
$$
\displaystyle
\partial_t h=h\Delta h+|\nabla h|^2.
$$
In \cite{brandle2005viscosity}, the authors prove the well posedness for this equation within the class of bounded, continuous and non negative viscosity solutions.
This is beyond the scope of this paper to couple this type of solutions with the other equations of the model, and we leave this problem open for future studies. An other interesting question is the numerical simulation of the model in the presence of dry areas: this will be carried out in a forthcoming work.

The second part of the paper is dedicated to pattern formation when the initial topography is an inclined plane and the bottom surface is weakly eroded. Our aim is to identify instability mechanisms that could explain the formation of patterns. For that purpose, we linearize System~\eqref{syst-intro} around a constant state and we study the conditions of spectral instability and the nature of the instabilities. We expect that when the system is spectrally unstable for some parameters and wave vectors, a slight perturbation of the system with these unstable modes will grow up and lead to the formation of patterns in the soil.

Stability studies have been done previously, for other landscape evolution models. The papers \cite{smith_stability_1972} and \cite{loewenherz_stability_1991} analyse a model of two equations, where the water is supposed to be at equilibrium. This model has steady solutions for which the soil height can be concave in some areas and convex in other areas, depending on the sediment discharge law. They show that the linearised system is stable in the convex parts and unstable in the concave parts, with a stronger instability in the transverse direction. In their model they use the sediment transport law, thus its not the same framework as in the model~\eqref{syst-intro}.

Note that in a couple of recent papers \cite{anand2020linear} and \cite{bonetti2020channelization}, the following system of 2 PDEs was considered:
\begin{equation}\label{eq-zh}
\displaystyle
\partial_t z=K\Delta z-e(\frac{h}{H})^m|\nabla z|^n+U,\quad \partial_t h=\dive(hv_0\frac{\nabla z}{|\nabla z|})+R,
\end{equation}

where $R,U$ and $V_0$ are constants. A numerical scheme is designed in \cite{anand2020linear} for~\eqref{eq-zh} where the time derivative of the fluid height is neglected with test cases where the initial bottom topography is pyramidal. It is found that a channelization index 
$$\displaystyle \mathcal{C}_I=\frac{e\ell^{m+n}}{K^nU^{1-n}}$$
drives the formation of channels: the number of channels and their branching increase with $\mathcal{C}_I$. This analysis is completed by a spectral stability analysis of a spatially non homogeneous steady state where the topography is a hillslope which is divided in the middle. It is found numerically that there exists a critical $\mathcal{C}_I^0$ such that the steady state is stable if  $\mathcal{C}_I\leq \mathcal{C}_I^0$ and unstable otherwise.\\

The spectral study carried out in this paper is new and our analysis provides some explanations for the formation of patterns in landscapes. The appearance of channels on the flat plane is indeed the initial stage of development for the formation of valley and rivers in landscapes.  
The stability of the system depends on the parameters, in particular the constant of creep $K$ plays an important role in this study. We show that there is a critical value $\bar K$ such that if $K \geq \bar K$, and if another condition on parameters is satisfied, then the system is spectrally stable at all frequencies. If $K < \bar K$ then there exist some wave numbers and vectors for which the system is spectrally unstable. Moreover, the instabilities grow as the wave vectors of the perturbations points in the direction transverse to the flow, which explains the formation of gullies and channels aligned with the direction of the fluid flow. We then recover qualitatively the results of \cite{bonetti2020channelization}.\\

Finally, our stability study is completed by direct numerical simulations, which illustrate the appearance of patterns for the nonlinear system. 
The space and time scales of the model can take a large range of values, depending of the environment. On real landscapes, the domain size can be measured in kilometers, with a very slow erosion rate, in the order of $20-200$ millimeters per thousand year. In the experience on salt and plaster made in \cite{guerin_streamwise_2020}, the domain has a size of the order of ten centimeters whereas the erosion speed is around one millimeter per hour (so, much faster than in real landscape) and the fluid velocity is 1 meter per second. Parameters chosen in the numerical simulations are based on these experiments.
As in \cite{anand2020linear, bonetti2020channelization}, we have observed that decreasing $K$ (respectively increasing the channelization index $\mathcal{C}_I$) reinforces the channelization process. Note that we have focused here on the formation of channels: unlike simulations made in \cite{lebrun_numerical_2018} where the initial state is a matured landscape, we start from a simple state and the landscape evolves by himself in the simulations. \\


The paper is organised as follow: 
First, in Section~\ref{sec_model}, we describe the model and the associated system of equations. Then, Section~\ref{WP} is devoted to the proof of the well posedness character of the system in short time, (see Theorem~\ref{th}).
Next, in Section~\ref{stability} we carry out a spectral stability analysis of the  System~\eqref{syst-intro} linearized about a stationary solution. These stability results are compared to direct numerical simulations of the nonlinear system~\eqref{syst-intro} in Section~\ref{sec_cclstab}. Finally, Section~\ref{sec_ccl} draws a brief conclusion of the paper and provides some future perspectives.

\section{The landscape evolution model} \label{sec_model}

In this section, we introduce the landscape evolution model considered in this paper. This is a system of three partial differential equations for the fluid height $h$, the bottom topography $z$ and the sediment concentration $c$.

\paragraph{Evolution of topography. } \label{sec_evol_topo}

The evolution of the bottom topography $z$ is given by:
\begin{equation*}
	\dt z = K \Delta z - E + S.
\end{equation*}

The functions $E = E(t,x,y)$ and $S = S(t,x,y)$ represent the erosion speed of the soil and the sedimentation speed respectively, with $t \geq 0$, $(x,y) \in \RR^2$. \\

The parameter $K > 0$ is a constant, and the term $K \Delta z$ models the creep of the soil. This phenomenon is a slow diffusive movement of the soil which occurs at large time and space scales. This movement is caused by several processes, such as the gravitational flow of the soil, wind, rain splash, expansions and contractions of the soil due to freeze-thaw, wet-dry and hot-cold cycles, or biological activity. In sufficiently eroded landscape there are generally not many sharp edges, and this creep term, which tends to smooth the bottom surface models this phenomenon. We shall see that this term plays a significant role in the well-posedness of the model. However, the creep effect is not supposed to be relevant in the formation of patterns as it is a short time effect and should be supposed to be small in comparison to the erosion and sedimentation terms.\\ 

The erosion of the surface is caused by the shear stress and the friction of the water flow. Assuming that the fluid velocity is constant across the fluid layer, this amounts to consider that the erosion increases with the (norm of the) water velocity, and with the water discharge $Q$, as in \cite{howard_channel_1983}:
$$\displaystyle E(t,x,y) = \alpha Q^\mu v^\nu.$$
Consequently, as $Q = hv$, we will suppose that the erosion speed depends on a power of the norm of the water velocity, and of a power of the water height. Thus we set 
$$
\displaystyle
E(t,x,y)= e \left( \frac{h(t,x,y)}{H} \right)^m \left( \frac{|v(t,x,y)|}{V} \right)^n,
$$

where $e$ is the erosion speed in the conditions $h=H$ and $|v|=V$ with $H,V>0$ that respectively represent a reference fluid height and fluid velocity. \\

The sedimentation occurs when the concentration of sediments in water is high enough. The sedimentation speed increases with the concentration of sediment in the fluid. For the sake of simplicity, we  suppose that this speed is proportional to the concentration and we set: 
$$
\displaystyle
S(t,x,y) = s \, \frac{c(t,x,y)}{c_{sat}},
$$
with $s$ the speed of sedimentation in the reference condition $c=c_{sat}$. Therefore, the soil elevation evolves according to the equation :
\begin{equation}\label{evol-z}
	\dt z = K \Delta z - e \left( \frac{h(t,x,y)}{H} \right)^m \left( \frac{|v(t,x,y)|}{V} \right)^n + s \, \frac{c(t,x,y)}{c_{sat}}.
\end{equation}

\paragraph{The landscape evolution model.}

We complete Equation~\eqref{evol-z} with two evolution equations for the fluid height $h$ and sediment concentration $c$. The mass conservation law for the fluid reads
\begin{equation}\label{evol-h}
\displaystyle
\dt h+\dive(hv)=r,
\end{equation}

where $r$ is a source term, modeling an incoming flow in a channel or the rain over the bottom.\smallskip

On the other hand, the mass conservation law for the sediment reads
\begin{equation}\label{evol-c}
\dt (hc)+\dive(chv)=\rho_s(E-S),    
\end{equation}
where $\rho_s$ is the volumetric mass density of the sediments, and is constant. In order to close System~\eqref{evol-z},~\eqref{evol-h},~\eqref{evol-c}, we need to write an equation for the fluid velocity. One possibility would be to write a shallow water type model with an evolution equation for the momentum $hv$. We rather choose the simpler closure 
\begin{equation}\label{evol-v}
\displaystyle
v=-\mu\nabla (h+z),
\end{equation}

where $\mu>0$ is some characteristic fluid velocity and $\nabla(h+z)$ is the gradient of the fluid surface elevation. System~\eqref{evol-z},~\eqref{evol-h},~\eqref{evol-c},~\eqref{evol-v} is closed and we shall consider its well-posedness in Section~\ref{WP}.\smallskip

We are also interested in the pattern formation at the surface of the soil. For that purpose, we have chosen to explore the case of water flowing down an inclined plane. This situation was considered experimentally in \cite{guerin_streamwise_2020}. The domain $\Omega\subset\mathbb{R}^2$ has length $L_x$ and width $L_y$: $\Omega = [0,L_x] \times [0,L_y]$. Denote $\theta$ the inclination of the plane. We can decompose the bottom topography $z$ as $z(t,x,y) = (L_{x}-x) \tan \theta + \tilde{z}(t,x,y)$ where $\tilde{z}$ is the eroded height of the soil. Thus the fluid velocity is written as $v(t,x,y) = \mu (\tan \theta,0) - \mu \nabla (\tilde{z}+h)$.  Consequently, omitting the tilde over $z$,
System~\eqref{evol-z},~\eqref{evol-h},~\eqref{evol-c},~\eqref{evol-v} admits the new form:
\begin{subequations}
\begin{empheq}[left= \empheqlbrace]{align}
& \displaystyle
\dt h + \mu \tan \theta \dx h = \mu \dive (h \nabla (h+z)), \vspace{2mm} \label{eq_systcomplet_h} \\
&\displaystyle
h \dt c + \mu h \tan \theta \dx c = \mu h \nabla (h+z).\nabla c + \rho_s \, e \left( \frac{h(t,x)}{H} \right)^m \left( \frac{|v(t,x)|}{V} \right)^n - \rho_s \, s \, \frac{c(t,x)}{c_{sat}},\vspace{2mm} \label{eq_systcomplet_c} \\
&\displaystyle
\dt z = K \Delta z - e \left( \frac{h(t,x)}{H} \right)^m \left( \frac{|v(t,x)|}{V} \right)^n + s \, \frac{c(t,x)}{c_{sat}}. \label{eq_systcomplet_z} 
\end{empheq}
\label{syst_complet}
\end{subequations}

\section{Well-posedness of the landscape evolution model} \label{WP}

In this section we study the existence and uniqueness of solutions of the system~\eqref{evol-z},~\eqref{evol-h},~\eqref{evol-c},~\eqref{evol-v}, locally in time.

\subsection{Hypothesis on the system of equations} \label{WP_syst}


We consider System~\eqref{evol-z},~\eqref{evol-h},~\eqref{evol-c},~\eqref{evol-v} where we set, for simplicity, $\mu = 1$, $\rho_s = 1, c_{sat} = 1$ and $H = V = 1$. The choice of these constants does not change anything in the proof Theorem~\ref{th}, we fix them to simplify the notations. Provided that $\forall (t,x) \in \RR^+ \times \RR^2$, $h(t,x) \geq h_{min} > 0$, the equations on $h$ and on $z$ are parabolic equations.
As long as $h$ does not vanish, the equation on $c$ can be written as 
$$
\dt c + v.\nabla c = e h^{m-1} |v|^m - sc/h - rc/h.
$$

Therefore, in order to prove Theorem~\ref{th}, we consider the equivalent following system, composed of two parabolic equations and one transport equation:
\begin{subequations}
\begin{empheq}[left= \empheqlbrace]{align}
\hspace{1mm} & \displaystyle\dt h -\dive(h\nabla h) - \dive(h \nabla z) = r(t,x), \label{eq_h} \\[0.2em]
& \displaystyle\dt z = K \Delta z + sc - e h^m |v|^n, \label{eq_z} \\[0.2em]
& \displaystyle\dt c + v.\nabla c = e h^{m-1} |v|^n - sc/h - rc/h, \label{eq_c} 
\end{empheq}
\label{eq_WPsyst}
\end{subequations}
with the initial conditions $h(0,x) = h^0(x)$, $z(0,x) = z^0(x)$ and $c(0,x) = c^0(x)$, and where $t \in \RR^+$, $x \in \RR^2$.\\

In Subsection~\ref{useful properties}, we recall some results concerning Sobolev spaces that will be used to prove Theorem~\ref{th}. The well posedness of System~\eqref{eq_WPsyst} is proved in Subsections~\ref{a priori estimate}~-~\ref{uniqueness} by using a fixed point argument and energy inequalities. We first establish an a priori estimate on the solutions of~\eqref{eq_WPsyst} in Subsection~\ref{a priori estimate}. Then, we build a sequence of approximate solutions in
Subsection~\ref{approximate system} and provide uniform estimates on these solutions. We show that it forms a Cauchy sequence and converges to a solution of System~\eqref{eq_WPsyst}. Finally, we prove the uniqueness of  solutions in Subsection~\ref{uniqueness}. In these sections, we denote by $\nabla^p f$ the vector made by the partial derivatives of order $p$ of the function $f$. We also denote $(\nabla f)^2 = (\partial_{x_i} f \, \partial_{x_j} f, 1 \leq i \leq 2, 1 \leq j \leq 2)$.

\subsection{Sobolev injections} \label{useful properties}

In this section we recall some properties of Sobolev spaces, that will be used in the next sections. The proofs of these properties can be found in \cite{adams_sobolev_2003}, and in  \cite{evans_partial_2010} for Proposition ~\ref{prop_Sob_reg_derivative}.
The first proposition concerns the imbedding of the Hilbert space $H^{k}(\RR^2)$ into Sobolev spaces with smaller derivation index. The symbol $\hookrightarrow$ indicates that the injection is continuous.
\begin{proposition}
If $2 < q < +\infty$ and $k \in \NN$ then $H^{k+1}(\RR^2) \hookrightarrow W^{k,q}(\RR^2)$.
\end{proposition}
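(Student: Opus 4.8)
The plan is to reduce the claimed embedding to the single borderline Sobolev inequality in dimension two, namely $H^1(\RR^2) \hookrightarrow L^q(\RR^2)$ for every finite $q$, and then apply it derivative by derivative. First I would recall the Gagliardo--Nirenberg inequality in the plane: for $2 \le q < \infty$ there is a constant $C_q > 0$ such that
$$
\|u\|_{L^q(\RR^2)} \le C_q \, \|u\|_{L^2(\RR^2)}^{2/q} \, \|\nabla u\|_{L^2(\RR^2)}^{1-2/q}
$$
for all $u \in H^1(\RR^2)$. In particular $\|u\|_{L^q(\RR^2)} \le C_q \|u\|_{H^1(\RR^2)}$, which is exactly the continuous injection $H^1(\RR^2) \hookrightarrow L^q(\RR^2)$. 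The interpolation exponent $\theta = 1 - 2/q$ lies in $[0,1)$ precisely when $2 \le q < \infty$, which explains why the range of $q$ must be finite: the critical value $q = \infty$, corresponding to $\theta = 1$, is genuinely excluded since $H^1(\RR^2)$ does not embed into $L^\infty(\RR^2)$.

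With this inequality in hand, the rest is a routine unwinding of the norms. For $f \in H^{k+1}(\RR^2)$ and any multi-index $\alpha$ with $|\alpha| \le k$, the derivative $\partial^\alpha f$ belongs to $H^{k+1-|\alpha|}(\RR^2) \subseteq H^1(\RR^2)$, because $k+1-|\alpha| \ge 1$. Applying the displayed inequality to $u = \partial^\alpha f$ gives
$$
\|\partial^\alpha f\|_{L^q(\RR^2)} \le C_q \, \|\partial^\alpha f\|_{H^1(\RR^2)} \le C_q \, \|f\|_{H^{k+1}(\RR^2)}.
$$
Summing these estimates over all multi-indices $\alpha$ with $|\alpha| \le k$ produces a constant $C = C(k,q)$ such that $\|f\|_{W^{k,q}(\RR^2)} \le C \|f\|_{H^{k+1}(\RR^2)}$, which is the asserted continuous embedding. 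The case $q = 2$ is of course trivial, so the content lies entirely in $q > 2$.

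The main obstacle is the borderline inequality itself, since we are at the critical scaling where the usual Sobolev exponent $\frac{np}{n-p}$ degenerates for $n = p = 2$. To prove it from scratch I would start from the elementary Gagliardo--Nirenberg--Sobolev estimate $\|u\|_{L^2(\RR^2)} \lesssim \|\nabla u\|_{L^1(\RR^2)}$, obtained by integrating $|u|$ along each coordinate direction via the fundamental theorem of calculus and combining the two one-dimensional bounds by the Cauchy--Schwarz (Tonelli) argument. Applying this to the power $|u|^{s}$ with a suitable exponent $s > 1$ and then invoking H\"older's inequality to split $\|\,|u|^{s-1}\nabla u\,\|_{L^1}$ yields, after optimizing, the interpolation inequality above for each finite $q$; a density argument extends it from $C_c^\infty(\RR^2)$ to all of $H^1(\RR^2)$. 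Alternatively, this borderline embedding is exactly the content of the relevant Sobolev imbedding theorem in \cite{adams_sobolev_2003}, which may be cited directly.
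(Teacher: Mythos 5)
Your argument is correct. Note that the paper does not actually prove this proposition: it is stated in the subsection on Sobolev injections as a recalled fact, with the proof deferred entirely to the reference \cite{adams_sobolev_2003}, so there is no in-paper proof to compare against line by line. What you propose is the standard textbook route, and it is sound: the reduction of $H^{k+1}(\RR^2)\hookrightarrow W^{k,q}(\RR^2)$ to the borderline case $H^1(\RR^2)\hookrightarrow L^q(\RR^2)$ by applying the two-dimensional Gagliardo--Nirenberg inequality to each derivative $\partial^\alpha f$ with $|\alpha|\le k$ (which indeed lies in $H^{k+1-|\alpha|}\subseteq H^1$) and summing is exactly how one derives the higher-order statement from the first-order one, and your sketch of the borderline inequality itself (the $L^1$-gradient estimate applied to $|u|^s$, H\"older, optimization in $s$, then density of $C_c^\infty$ in $H^1$) is the classical proof. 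Your remark that $q=\infty$ is genuinely excluded because $H^1(\RR^2)\not\hookrightarrow L^\infty(\RR^2)$ correctly identifies why the hypothesis $q<\infty$ is needed. The only thing your write-up adds beyond what the paper needs is the self-contained derivation; citing the imbedding theorem from Adams, as the paper does and as you note at the end, would already suffice.
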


The following proposition gives the imbedding of the Hilbert space $H^k(\RR^2)$ into a space of smooth functions. 
\begin{proposition}
If $k \geq 2$ then $H^{k}(\RR^2) \hookrightarrow C^{k-2, \alpha}(\RR^2), \, \forall \, 0<\alpha <1$. In particular, $H^{2}(\RR^2) \subset C^0_b(\RR^2)$.
\end{proposition}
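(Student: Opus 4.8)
The plan is to derive the statement from the preceding proposition together with Morrey's inequality, which is available in \cite{adams_sobolev_2003}. Morrey's inequality asserts that in dimension $n$, whenever $q>n$ one has the continuous embedding $W^{1,q}(\RR^n)\hookrightarrow C^{0,1-n/q}(\RR^n)$. Here $n=2$, so any $q>2$ produces a Hölder exponent $1-2/q\in(0,1)$, and letting $q$ grow pushes this exponent towards $1$ from below. The whole idea is therefore to transport the derivatives of order $k-2$ of a function $f\in\hk$ into such a $W^{1,q}$ space, and then apply Morrey.

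First I would apply the previous proposition with its derivation index shifted by one: writing $\hk=H^{(k-1)+1}$, it gives $\hk\hookrightarrow W^{k-1,q}(\RR^2)$ for every $2<q<+\infty$. Consequently, if $f\in\hk$ then each partial derivative $\nabla^{k-2}f$ of order $k-2$ lies in $W^{1,q}(\RR^2)$, with norm controlled by $\|f\|_{\hk}$. Applying Morrey's inequality to every such derivative yields $\nabla^{k-2}f\in C^{0,1-2/q}(\RR^2)$, that is $f\in C^{k-2,\,1-2/q}(\RR^2)$, again with the Hölder norm bounded by a constant times $\|f\|_{\hk}$. This already establishes the embedding for the one exponent $\alpha=1-2/q$.

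To reach an arbitrary $\alpha\in(0,1)$, I would fix $\alpha$ and choose $q$ large enough that $\beta:=1-2/q>\alpha$, i.e. $q>2/(1-\alpha)$, and then downgrade the Hölder exponent from $\beta$ to $\alpha$. On the unbounded domain $\RR^2$ this downgrade is not automatic: splitting the Hölder quotient according to whether $|x-y|\le1$ or $|x-y|>1$, one bounds it by $[g]_{\beta}$ in the first regime and by $2\|g\|_{L^\infty}$ in the second, where $g$ denotes any derivative of $f$ of order at most $k-2$. Since the embedding into $C^{0,\beta}$ already controls both the $\beta$-seminorm and the sup norm of these derivatives by $\|f\|_{\hk}$, this gives $f\in C^{k-2,\alpha}(\RR^2)$ with continuity of the injection. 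The case $k=2$ is then immediate: there $C^{k-2,\alpha}=C^{0,\alpha}$, whose elements are bounded and uniformly continuous, so $H^2(\RR^2)\subset C^0_b(\RR^2)$.

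I expect the only genuine care to lie in this last step: the quantifier over all $\alpha\in(0,1)$ cannot be served by a single $q$, the borderline value $\alpha=1$ is truly unreachable by this route (consistent with its exclusion in the statement), and the Hölder-exponent downgrade must be performed globally on $\RR^2$ rather than on a bounded set, which is precisely why the uniform sup-norm control furnished by the embedding is indispensable.
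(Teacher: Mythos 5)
The paper offers no proof of this proposition: it is stated as a recalled fact with a pointer to the reference on Sobolev spaces, so there is no internal argument to compare yours against. Your derivation is correct and self-contained relative to the paper's toolkit. Reindexing the preceding proposition as $H^{k}(\RR^2)=H^{(k-1)+1}(\RR^2)\hookrightarrow W^{k-1,q}(\RR^2)$ and then applying Morrey's inequality $W^{1,q}(\RR^2)\hookrightarrow C^{0,1-2/q}(\RR^2)$ to every derivative of order at most $k-2$ does yield $f\in C^{k-2,\,1-2/q}(\RR^2)$ with norm control by $\|f\|_{H^k}$; note that landing in $C^{k-2,\alpha}$ also requires the derivatives of order strictly less than $k-2$ to be bounded and continuous, which your formulation covering every derivative of order at most $k-2$ already handles, since those lie in $W^{1,q}$ as well. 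The two delicate points are both treated correctly: the quantifier over all $\alpha\in(0,1)$ is served by choosing $q>2/(1-\alpha)$ and then lowering the H\"older exponent, and that lowering on the unbounded domain $\RR^2$ genuinely needs the sup-norm bound (splitting the difference quotient at $|x-y|=1$), which the $C^{0,\beta}$ embedding supplies. The final inclusion $H^2(\RR^2)\subset C^0_b(\RR^2)$ is just the case $k=2$. In short, where the paper delegates to a textbook citation, you give a short correct proof built from the paper's own Proposition on $W^{k,q}$ embeddings plus one standard external ingredient (Morrey), which is a perfectly legitimate and arguably more transparent route.
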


\begin{proposition}
   If $k>1$ then  $H^{k}(\RR^2) \hookrightarrow W^{k-2,\infty}(\RR^2)$. 
\end{proposition}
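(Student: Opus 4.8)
The plan is to reduce the statement to the basic endpoint embedding $H^s(\RR^2)\hookrightarrow L^\infty(\RR^2)$, valid for $s>1$, and then to apply it derivative by derivative. Since $k$ is a positive integer throughout, the hypothesis $k>1$ is equivalent to $k\geq 2$, so that $k-2\geq 0$ and the target space $W^{k-2,\infty}(\RR^2)$ is well defined.

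First I would establish the scalar embedding $H^s(\RR^2)\hookrightarrow L^\infty(\RR^2)$ for $s>1$ by Fourier analysis. For $f\in H^s(\RR^2)$, Cauchy--Schwarz gives
\[
\|\hat f\|_{L^1}=\int_{\RR^2}|\hat f(\xi)|\,(1+|\xi|^2)^{s/2}(1+|\xi|^2)^{-s/2}\,d\xi\leq \|f\|_{H^s}\left(\int_{\RR^2}(1+|\xi|^2)^{-s}\,d\xi\right)^{1/2}.
\]
The remaining integral, computed in polar coordinates as $2\pi\int_0^\infty (1+\rho^2)^{-s}\rho\,d\rho$, converges precisely when $2s>2$, i.e. $s>1$; this is the single place where the dimension $n=2$ and the critical threshold enter. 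Hence $\hat f\in L^1(\RR^2)$, so $f$ agrees almost everywhere with a bounded (indeed continuous) function and $\|f\|_{L^\infty}\leq \|\hat f\|_{L^1}\leq C_s\,\|f\|_{H^s}$.

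Next I would bootstrap to derivatives. Let $f\in H^k(\RR^2)$ and let $\alpha$ be a multi-index with $|\alpha|\leq k-2$. Then $\partial^\alpha f\in H^{k-|\alpha|}(\RR^2)$ and $k-|\alpha|\geq 2>1$, so the scalar embedding applies to $\partial^\alpha f$ and yields $\|\partial^\alpha f\|_{L^\infty}\leq C\,\|\partial^\alpha f\|_{H^{k-|\alpha|}}\leq C\,\|f\|_{H^k}$. Taking the maximum over all $|\alpha|\leq k-2$ gives $\|f\|_{W^{k-2,\infty}}\leq C\,\|f\|_{H^k}$, which is exactly the asserted continuous embedding. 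As a shortcut, since $k\geq 2$ the previous proposition already provides $H^k\hookrightarrow C^{k-2,\alpha}(\RR^2)$, and the trivial inclusion $C^{k-2,\alpha}(\RR^2)\subset W^{k-2,\infty}(\RR^2)$ (bounded derivatives up to order $k-2$) closes the argument without redoing the Fourier computation.

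The only genuinely delicate point is the convergence of $\int_{\RR^2}(1+|\xi|^2)^{-s}\,d\xi$, which fixes the critical exponent $s=n/2=1$ and explains the hypothesis $k>1$; everything else is routine once this threshold has been located.
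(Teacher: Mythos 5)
Your proof is correct: the Cauchy--Schwarz/Fourier argument correctly locates the critical exponent $s>1=n/2$ in dimension two, and applying the resulting $L^\infty$ bound to each derivative $\partial^\alpha f$ with $|\alpha|\leq k-2$ (which lies in $H^{k-|\alpha|}$ with $k-|\alpha|\geq 2>1$) yields exactly the stated embedding with the norm estimate. The paper gives no proof of this proposition and simply defers to the reference on Sobolev spaces; your argument is the standard one, and your closing observation that the result also follows from the preceding proposition $H^{k}(\RR^2)\hookrightarrow C^{k-2,\alpha}(\RR^2)$ together with the inclusion $C^{k-2,\alpha}\subset W^{k-2,\infty}$ is the shortest route given what the paper already states.
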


Finally, Proposition ~\ref{prop_algebra} provides a bound on the norm of a power of functions, that will be useful to control non linear terms in the equations.

\begin{proposition}
$H^k(\RR^2)$ is an algebra for $k > 1$. Thus if $q\in\mathbb{N} ^*$ then $||f^q||_{H^{k}(\RR^2)} \leq C ||f||_{H^k(\RR^2)}^q$. If $q\in\RR\setminus\mathbb{N}$ and $q>k$, one has $||f^q||_{H^{k}(\RR^2)}\leq C ||f||_{H^k(\RR^2)}^q$.
\label{prop_algebra}
\end{proposition}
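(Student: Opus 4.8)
The plan is to prove the three assertions in order, deriving both power estimates from the algebra property. For the algebra property I would work on the Fourier side. Writing $\langle\xi\rangle=(1+|\xi|^2)^{1/2}$ and recalling $\|u\|_{H^k}^2=\int_{\RR^2}\langle\xi\rangle^{2k}|\hat u(\xi)|^2\,d\xi$, I would use $\widehat{fg}=\hat f*\hat g$ together with the elementary subadditivity bound $\langle\xi\rangle^{k}\le C_k(\langle\xi-\eta\rangle^{k}+\langle\eta\rangle^{k})$ (which follows from $\langle\xi\rangle\le\langle\xi-\eta\rangle+\langle\eta\rangle$ and convexity) to split $\langle\xi\rangle^k|\widehat{fg}(\xi)|$ into two convolutions, one of the form $(\langle\cdot\rangle^k|\hat f|)*|\hat g|$ and the symmetric one. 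Applying Young's inequality $\|u*w\|_{L^2}\le\|u\|_{L^2}\|w\|_{L^1}$ reduces the estimate to bounding $\|\hat g\|_{L^1}$ (resp. $\|\hat f\|_{L^1}$) by a Sobolev norm. This last step is exactly where the hypothesis $k>1$ enters: by Cauchy--Schwarz, $\|\hat g\|_{L^1}\le\|\langle\cdot\rangle^{-k}\|_{L^2}\,\|g\|_{H^k}$, and $\langle\cdot\rangle^{-k}\in L^2(\RR^2)$ precisely when $2k>2$, i.e. $k>1=n/2$. Combining the two pieces yields $\|fg\|_{H^k}\le C\|f\|_{H^k}\|g\|_{H^k}$, so $H^k(\RR^2)$ is a Banach algebra.

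The integer power estimate then follows by a straightforward induction on $q\in\NN^*$: writing $f^q=f\cdot f^{q-1}$ and applying the algebra inequality gives $\|f^q\|_{H^k}\le C\|f\|_{H^k}\|f^{q-1}\|_{H^k}$, and iterating produces a factor $C^{q-1}$ that I would absorb into a $q$-dependent constant.

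The non-integer case $q\in\RR\setminus\NN$, $q>k$, is the heart of the proposition and the main obstacle; here $f^q$ is understood for $f\ge0$ (as in the intended application, where $f=h>0$). I would estimate each derivative $\partial^\alpha(f^q)$ with $|\alpha|=p\le k$ via the Fa\`a di Bruno formula, which expresses it as a finite linear combination of terms $f^{q-j}\,\partial^{\beta_1}f\cdots\partial^{\beta_j}f$ with $\beta_1+\dots+\beta_j=\alpha$, each $|\beta_i|\ge1$, and $1\le j\le p$, the coefficients being products $q(q-1)\cdots(q-j+1)$. For each such term I would bound the factor $f^{q-j}$ in $L^\infty$ by $\|f\|_{L^\infty}^{q-j}$ and control the product of derivatives in $L^2$ by a Moser/Gagliardo--Nirenberg interpolation inequality of the form $\|\partial^{\beta_1}f\cdots\partial^{\beta_j}f\|_{L^2}\le C\|f\|_{L^\infty}^{j-1}\|f\|_{H^k}$, valid because $\sum|\beta_i|=p\le k$. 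Multiplying, each term is bounded by $C\|f\|_{L^\infty}^{q-1}\|f\|_{H^k}$, and the embedding $H^k(\RR^2)\hookrightarrow L^\infty(\RR^2)$ for $k>1$ (available from the preceding propositions) finally converts this into $C\|f\|_{H^k}^{q}$; the lowest-order term $\|f^q\|_{L^2}\le\|f\|_{L^\infty}^{q-1}\|f\|_{L^2}$ is handled the same way. The role of the assumption $q>k$ is precisely to keep every exponent $q-j\ge q-k>0$, so that all powers $f^{q-j}$ stay bounded; equivalently, it guarantees that the scalar map $u\mapsto u^q$ is $C^k$ up to $u=0$ with controlled derivatives, which is what the composition estimate requires. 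I expect the bookkeeping of the interpolation exponents in the Gagliardo--Nirenberg step, together with the care needed near $\{f=0\}$, to be the only genuinely delicate points.
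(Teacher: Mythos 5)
Your argument is sound, but there is nothing in the paper to compare it against: the authors do not prove Proposition~\ref{prop_algebra} at all, they simply refer the reader to \cite{adams_sobolev_2003} for the statements of Subsection~\ref{useful properties}. What you have written is the standard self-contained route and it works. The Fourier-side proof of the algebra property (splitting $\langle\xi\rangle^k$ by subadditivity, Young's inequality, and $\langle\cdot\rangle^{-k}\in L^2(\RR^2)$ exactly when $k>1=n/2$) is correct, and the induction for integer powers is immediate. For the fractional case, the Fa\`a di Bruno/Moser combination is the right tool, and you have correctly identified the role of the hypothesis $q>k$: it keeps every exponent $q-j$ strictly positive for $j\le p\le k$, so that $u\mapsto u^q$ is $C^k$ up to $u=0$ and the factors $f^{q-j}$ are controlled by $\|f\|_{L^\infty}^{q-j}$, which the embedding $H^k(\RR^2)\hookrightarrow L^\infty$ converts into powers of $\|f\|_{H^k}$. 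Two small remarks: (i) the proposition as stated is silent about the sign of $f$, and for non-integer $q$ the estimate only makes sense for $f\ge0$ (in the paper's application $f=h\ge h_{min}>0$, or $f=|v|$), so your restriction is the correct reading rather than a loss of generality; (ii) the Moser product bound $\|\partial^{\beta_1}f\cdots\partial^{\beta_j}f\|_{L^2}\le C\|f\|_{L^\infty}^{j-1}\|f\|_{H^p}$ for $\sum_i|\beta_i|=p$ is standard (Gagliardo--Nirenberg plus H\"older with exponents $2p/|\beta_i|$), so the ``bookkeeping'' you defer is routine. No gap.
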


Finally, Proposition ~\ref{fdtf} concerns spaces involving time, and gives the continuity in time of a function provided this function and its derivative in time have enough regularity.
\begin{proposition}
\label{fdtf}
If $f \in L^2_T(H^{k+2}(\RR^2))$ and $\dt f \in L^2_T(H^{k}(\RR^2))$ then $f \in C^0_T(H^{k+1}(\RR^2))$.
\label{prop_Sob_reg_derivative}
\end{proposition}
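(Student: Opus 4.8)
This is the classical time-regularity result for an evolution (Gelfand) triple, specialized to the Sobolev scale on $\RR^2$, and my plan is to recognize it as such and then deploy the standard energy-identity argument. The underlying abstract statement is that if $f\in L^2(0,T;V)$ and $\dt f\in L^2(0,T;V')$ for an evolution triple $V\hookrightarrow H\hookrightarrow V'$ with pivot $H$, then $f\in C^0([0,T];H)$. So the first task is to identify $V$, $H$, $V'$ correctly; the second is to carry out (or quote) the abstract argument.

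First I would set up the triple using the spatial Fourier transform. Writing $w(\xi)=(1+|\xi|^2)^{1/2}$, the space $H^s(\RR^2)$ is characterized by $\int w^{2s}|\hat f|^2\,d\xi<\infty$. The key arithmetic observation is the identity $k+(k+2)=2(k+1)$: for $u\in\hk$ and $v\in\lhkde$ (spatially, $v\in H^{k+2}$) the pairing $\langle u,v\rangle:=\int w^{2(k+1)}\,\hat u\,\overline{\hat v}\,d\xi$ satisfies $|\langle u,v\rangle|\le \|u\|_{\hk}\,\|v\|_{H^{k+2}}$ by Cauchy--Schwarz, after splitting $w^{2(k+1)}=w^{k}\cdot w^{k+2}$. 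Hence $H^{k+2}\hookrightarrow\hkun\hookrightarrow\hk$ is an evolution triple with pivot $\hkun$ and $\hk\cong(H^{k+2})'$ under this pairing, so that the hypothesis $\dt f\in\lhk$ is exactly the requirement $\dt f\in L^2_T(V')$. This index bookkeeping is the one genuinely scale-specific ingredient; everything else is abstract.

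The heart of the proof is then the energy identity $\frac{d}{dt}\|f(t)\|_{\hkun}^2=2\langle\dt f(t),f(t)\rangle$ for a.e.\ $t$, whose right-hand side is bounded by $2\,\|\dt f(t)\|_{\hk}\,\|f(t)\|_{H^{k+2}}$, a product of two $L^2(0,T)$ functions and thus in $L^1(0,T)$. This makes $t\mapsto\|f(t)\|_{\hkun}^2$ absolutely continuous, hence continuous and bounded on $[0,T]$. To upgrade continuity of the norm to strong continuity into $\hkun$, I would observe that $t\mapsto f(t)$ is weakly continuous in $\hkun$ (pairing against any fixed element is absolutely continuous by the same estimate) and invoke the Hilbert-space fact that weak convergence together with convergence of norms yields strong convergence. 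The main obstacle, and the only delicate point, is justifying the energy identity rigorously, since $f$ possesses only a weak time derivative: I would mollify $f$ in time, prove the identity for the smooth approximants $f_\varepsilon$ by classical calculus, verify $f_\varepsilon\to f$ in $\lhkde$ and $\dt f_\varepsilon\to\dt f$ in $\lhk$, and pass to the limit. Applying the same identity to the differences $f_\varepsilon-f_\delta$ shows that $(f_\varepsilon)$ is Cauchy in $C^0_T(H^{k+1}(\RR^2))$, which simultaneously delivers the identity and the continuity of the limit. This is precisely the scheme of the reference \cite{evans_partial_2010}, transported to the present Sobolev scale.
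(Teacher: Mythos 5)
Your argument is correct and is essentially the proof the paper points to: the paper gives no proof of Proposition~\ref{prop_Sob_reg_derivative} but cites \cite{evans_partial_2010}, where the corresponding result is established by exactly your scheme --- the duality bound $|\langle \dt f, f\rangle|\leq \|\dt f\|_{H^k}\|f\|_{H^{k+2}}$ coming from the splitting $w^{2(k+1)}=w^{k}\cdot w^{k+2}$, followed by mollification in time and the energy identity applied to differences of approximants to get a Cauchy sequence in $C^0_T(H^{k+1})$. The only detail worth making explicit is that $f$ must be extended past the endpoints $t=0,T$ (e.g.\ by reflection) before mollifying, so that the approximants are defined on all of $[0,T]$.
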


In what follows, we will denote $H^{k}(\RR^2)=H^k$ for the sake of simplicity.

\subsection{An a priori estimate} \label{a priori estimate}

In this section we give an a priori estimate on solutions of System~\eqref{eq_WPsyst}, which will be used in the Section~\ref{convergence}. We fix $k=3$, and assume $m>0$, $n>3$ or $n=2$ as in the hypothesis of Theorem~\ref{th}.

\begin{proposition}
Let $(h-h_{ref},z,c) \in \left(\lhkde \cap C_T(H^{k+1}) \right)^2 \times C_T(H^k)$ be a solution of System~\eqref{eq_WPsyst}. Assume that the hypothesis of Theorem~\ref{th} are satisfied. Then there exists $T_1 \leq T$ such that $h-h_{ref}$ and $z$ are bounded in $C_{T_1}(H^{k+1})\cap L^2_{T_1}(H^{k+2})$, $c$ is bounded in $C_{T_1}(H^k)$, and they satisfy the estimate:
$$
\displaystyle
\mathcal{E}(t)\leq e^{Ct}\left(\mathcal{E}(0)+C\int_0^t\|r\|^2_{H^k}\right),\quad
\displaystyle
\int_0^t \left( \|\nabla h\|_{H^{k+1}}^2+\|\nabla z\|_{H^{k+1}}^2 \right) \leq C\,e^{Ct}\left(\mathcal{E}(0)+C\int_0^t\|r\|^2_{H^k}\right),
$$
where $C$ is a constant depending on $h^0, z^0, c^0, h_{min}$ and
$\mathcal{E}$ is defined as:
$$
\displaystyle
\mathcal{E}(t)=\frac{1}{2}\left(\|h-h_{ref}\|^2+\|\nabla h\|_{H^k}^2+\|z\|_{H^{k+1}}^2+\|c\|_{H^k}^2\right).
$$
\label{prop_apriori_estim}
\end{proposition}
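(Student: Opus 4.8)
The plan is to derive the energy estimate by performing $H^k$-type energy estimates on each equation of System~\eqref{eq_WPsyst} separately, then summing them and closing the resulting differential inequality for $\mathcal{E}(t)$ via a Gronwall argument. Because we work at the regularity level $k=3$, all of the Sobolev embeddings and the algebra property from Subsection~\ref{useful properties} are available: in particular $\hk$ is an algebra, $H^{k}\hookrightarrow W^{k-2,\infty}$, and $\hk\hookrightarrow C^0_b$, so nonlinear products and composition with $|v|^n$ can be bounded by powers of the $\hk$-norms. First I would differentiate Equations~\eqref{eq_h} and~\eqref{eq_z} up to order $k+1$ in space, multiply by the corresponding derivative of the unknown and integrate over $\RR^2$, integrating by parts to expose the parabolic dissipation. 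The key structural point is that each parabolic equation produces a good term: for $h$ the operator $-\dive(h\nabla h)$ gives, after integration by parts and using $h\geq h_{min}>0$, a coercive contribution of the form $h_{min}\|\nabla h\|_{H^{k+1}}^2$; for $z$ the creep term $K\Delta z$ gives $K\|\nabla z\|_{H^{k+1}}^2$. These are precisely the dissipative integrals appearing on the left-hand side of the claimed estimate. The lower bound $h\geq h_{min}$ must be propagated: since $h^0\geq 2h_{min}$ and $h-h_{ref}\in C_T(H^{k+1})\subset C_T(C^0_b)$, by continuity there is a time $T_1\leq T$ on which $h(t,\cdot)\geq h_{min}$ stays valid, which is exactly where the restriction $T_1\leq T$ enters.

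Next I would bound all the remaining terms as lower-order perturbations absorbable into $C\,\mathcal{E}(t)$ (plus the source term $\|r\|_{H^k}^2$). The coupling term $-\dive(h\nabla z)$ in~\eqref{eq_h} and the forcing terms $sc - eh^m|v|^n$ in~\eqref{eq_z} are handled by Cauchy--Schwarz and Young's inequality, splitting each product so that the top-order derivative falls on the dissipative quantity (and can be absorbed using a small constant $\varepsilon$ against the coercive terms) while the remaining factors are controlled in $L^\infty$ or $\hk$ by the algebra property. The erosion term $eh^m|v|^n$ with $v=-\nabla(h+z)$ is the most delicate nonlinearity: here Proposition~\ref{prop_algebra} is essential, giving $\|h^m\|_{\hk}\leq C\|h\|_{\hk}^m$ (using the appropriate branch of the proposition according to whether $m\in\NN$), and the factor $|v|^n$ requires handling $|v|=|\nabla(h+z)|$; this is where the hypothesis $n>3$ or $n=2$ is used, since the function $w\mapsto|w|^n$ is smooth enough to compose with an $\hk$ function and satisfy an estimate $\||v|^n\|_{\hk}\leq C\|v\|_{\hk}^n$ only when $n$ avoids the low non-integer range where $w\mapsto|w|^n$ lacks sufficient regularity at the origin. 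For the concentration $c$, which solves the transport equation~\eqref{eq_c}, no parabolic dissipation is available, so I would instead perform a plain $\hk$ energy estimate: differentiating~\eqref{eq_c}, the top-order transport term $v\cdot\nabla(\partial^\alpha c)$ integrates to a commutator that is controlled by $\|\dive v\|_{L^\infty}\|c\|_{\hk}^2$ and commutator terms bounded via the algebra property, while the source terms $eh^{m-1}|v|^n$, $sc/h$, and $rc/h$ are bounded in $\hk$ using $1/h\leq 1/h_{min}$ and the algebra property again.

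The main obstacle, as indicated above, is controlling the erosion nonlinearity $eh^m|v|^n$ at the $\hk=H^3$ level in a way that feeds back only into $\mathcal{E}(t)$ and the dissipative integrals rather than producing uncontrollable top-order growth. The subtlety is twofold: first, one must verify the composition estimate for $|v|^n$ under the precise hypotheses on $n$, justifying the exclusion of the range $0<n\leq 3$, $n\neq 2$; second, since $|v|^n$ carries up to $k+1$ derivatives of $h$ and $z$ (because $v$ itself is a gradient), one must carefully distribute derivatives so that at most one top-order derivative lands on each of $h$ or $z$, using Young's inequality with a small parameter to absorb that single top-order contribution into the coercive terms $h_{min}\|\nabla h\|_{H^{k+1}}^2$ and $K\|\nabla z\|_{H^{k+1}}^2$. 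Once all terms are bounded, summing over the three equations yields
\begin{equation*}
\frac{d}{dt}\mathcal{E}(t) + c_0\left(\|\nabla h\|_{H^{k+1}}^2+\|\nabla z\|_{H^{k+1}}^2\right)\leq C\,\mathcal{E}(t)+C\|r\|_{H^k}^2,
\end{equation*}
for some $c_0>0$ and a constant $C$ depending on $h^0,z^0,c^0,h_{min}$. Integrating this in time and applying Gronwall's lemma gives the stated bound on $\mathcal{E}(t)$, and a further time integration of the dissipation term (after discarding the nonnegative $\mathcal{E}(t)$ on the left) yields the bound on $\int_0^t(\|\nabla h\|_{H^{k+1}}^2+\|\nabla z\|_{H^{k+1}}^2)$. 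The hypothesis $K\,h_{min}-\|h^0\|_{L^\infty}^2\geq 0$ is presumably what guarantees $c_0>0$ after the coupling term $-\dive(h\nabla z)$ is symmetrized between the $h$- and $z$-estimates, ensuring the combined quadratic form in $(\nabla h,\nabla z)$ remains positive definite.
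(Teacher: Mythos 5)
Your overall route is the paper's: order-by-order energy estimates up to $H^{k+1}$ on the two parabolic equations exposing the dissipation $h_{min}\|\nabla h\|_{H^{k+1}}^2$ and $K\|\nabla z\|_{H^{k+1}}^2$, a plain $H^k$ transport estimate for $c$ using $1/h\leq 1/h_{min}$, control of the erosion term via the $C^3$ regularity of $w\mapsto|w|^n$ (which is exactly why $n>3$ or $n=2$ is assumed), a quadratic form in $\left(\|\nabla z\|_{H^{k+1}},\|\nabla h\|_{H^{k+1}}\right)$ whose positive definiteness is what the hypothesis $Kh_{min}-\|h^0\|_{L^\infty}^2\geq 0$ buys (your ``presumably'' is on target: the cross term comes from $\dive(h\nabla z)$ in the $h$-equation and the relevant form is $Ka^2+h_{min}b^2-\|h\|_{L^\infty}ab$), and a final Gronwall argument.

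There is, however, one step that fails as written: the claim that all remaining terms are ``lower-order perturbations absorbable into $C\,\mathcal{E}(t)$'' with $C$ depending only on the data. They are not. The commutator terms from $\dive(h\nabla h)$ already produce contributions of the type $\|\nabla h\|_{H^{k}}^2\|\nabla h\|_{H^{k+1}}$, which after Young's inequality leave $\|\nabla h\|_{H^k}^4\sim\mathcal{E}^2$, and the erosion term produces even higher powers of the norms. The differential inequality one actually obtains is
$$
\frac{d}{dt}\mathcal{E}(t)+c_0\left(\|\nabla h\|_{H^{k+1}}^2+\|\nabla z\|_{H^{k+1}}^2\right)\leq C\|r\|_{H^k}^2+C\left(1+C\mathcal{E}(t)^{\alpha}\right)\mathcal{E}(t)
$$
for some $\alpha>0$, which is superlinear and cannot be closed by Gronwall alone; applied naively, your linear version would yield an estimate valid on all of $[0,T]$, which should not be possible for this quasilinear system and contradicts the necessarily local statement. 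The missing ingredient is a continuity (bootstrap) argument: one first shows there exists $T_1\leq T$ such that $\mathcal{E}(t)\leq 2\mathcal{E}(0)$ on $[0,T_1]$, which freezes the factor $\mathcal{E}^{\alpha}$ into a constant depending on the initial data, and only then does Gronwall produce the stated bounds. The same continuity device is also what keeps $\|h\|_{L^\infty}^2$ strictly below $4Kh_{min}$ for $t>0$ so that the quadratic form remains coercive; your proposal attributes the restriction $T_1\leq T$ solely to preserving the pointwise lower bound $h\geq h_{min}$, which is only part of the reason the time must be taken small.
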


\begin{proof}
We first provide Sobolev estimates on the fluid height $h$. We multiply the equation~\eqref{eq_h} by $h-h_{ref}$ and integrate it over $\RR^2$. One obtains: 
\begin{equation}\label{h1}
\displaystyle
 \frac{1}{2} \frac{d}{dt} \|h-h_{ref}\|^2_{L^2} =
 \int_{\RR^2} (h-h_{ref}) \dive \left(h \nabla (h+z) \right)+\int_{\RR^2} (h-h_{ref}) r.
\end{equation}
By integrating by part~\eqref{h1}, and under the assumption that $h \geq h_{min}$, we obtain for any $t\geq 0$ fixed:
\begin{equation}\label{h11}
\frac{1}{2} \frac{d}{dt} \|h-h_{ref}\|^2_{L^2} + h_{min} \|\nabla h\|^2_{L^2}  \leq \|r\|_{L^ 2} \|h-h_{ref}\|_{L^2} + \|h\|_{L^\infty} \|\nabla z\|_{L^2}  \|\nabla h\|_{L^2}.
\end{equation}

Then for all $p \in \{1, \dots, k+1 \}$, we differentiate $p$ times Equation~\eqref{eq_h}, multiply it by $\nabla^p h$ and integrate it with respect to the space variable:
\begin{equation}
\displaystyle
\frac{1}{2} \frac{d}{dt} \|\nabla^p h\|^2_{L^2} + \int_{\RR^2} \nabla^{p-1} \left[ \dive(h\nabla (h+z)) \right] \nabla^{p+1} h
= -\int_{\RR^2} \nabla^{p-1} r \, \nabla^{p+1} h
 \label{h2}
\end{equation}
We estimate the second term in~\eqref{h2}. As $H^1 \hookrightarrow L^{4}$ and $H^2 \hookrightarrow L^{\infty}$, we find:
{\setlength\arraycolsep{1pt}
\begin{eqnarray*}
\displaystyle
\int_{\RR^2} \nabla^{p-1} \left[ \dive(h\nabla h) \right] \nabla^{p+1} h 
&=& \int_{\RR^2} h |\nabla^{p+1} h|^2+ \int_{\RR^2} \sum\limits_{i=1}^{p-1} \binom{p}{i} \nabla^i h \nabla^{p-i+1} h \nabla^{p+1} h + \int_{\RR^2} \nabla h \nabla^p h \nabla^{p+1} h \\ 
\displaystyle
&\geq& h_{min} ||\nabla^{p+1} h||_{L^2}^2-C\|\nabla h\|_{H^{p}}^2\|\nabla h\|_{H^{k+1}}.
\end{eqnarray*}
}
For the third term in~\eqref{h2}, we proceed similarly: 
\begin{align*}
\displaystyle
\left|\int_{\RR^2} \nabla^{p-1} \dive(h \nabla z) \nabla^{p+1} h\right| &= \left|\int_{\RR^2} h \nabla^{p+1} z \nabla^{p+1} h + \int_{\RR^2} \sum\limits_{i=1}^{p} \binom{p}{i} \nabla^i h \nabla^{p-i+1} z \nabla^{p+1} h\right| \\
\displaystyle
& \leq ||h||_{L^\infty} ||\nabla z||_{H^{k+1}} ||\nabla^{p+1} h||_{L^2} + C ||\nabla z||_{H^{p}}||\nabla h||_{H^{p}} ||\nabla h||_{H^{k+1}}.
\end{align*}
By inserting these two estimates into~\eqref{h2}, one obtains:
{\setlength\arraycolsep{1pt}
\begin{eqnarray}
\displaystyle
\frac{1}{2} \frac{d}{dt} \|\nabla h\|_{H^{k}}^2 + h_{min} \|\nabla h\|_{H^{k+1}}^2 
&\leq & \|r\|_{H^k} \|\nabla h\|_{H^{k+1}}+ \|h\|_{L^\infty} \|\nabla z\|_{H^{k+1}} \|\nabla h\|_{H^{k+1}}\nonumber\\
&&+ C \left(\|\nabla z\|_{H^{k}}+ \|\nabla h\|_{H^{k}}\right)^2\|\nabla h\|_{H^{k+1}}.
\label{eq_in_energy_h}
\end{eqnarray}
}

Next, we derive an estimate on the bottom topography $z$. We multiply equation~\eqref{eq_z} by $z$ and integrate over space $\mathbb{R}^2$. We get:
{\setlength\arraycolsep{1pt}
\begin{eqnarray}
\displaystyle
\frac{1}{2} \frac{d}{dt} ||z||^2_{L^2} + K ||\nabla z||^2_{L^2} &=& s \int_{\RR^2} c \, z - e \int_{\RR^2} h^m |v|^n z\leq s ||c||_{L^2} ||z||_{L^2} + e ||h||_{L^{\infty}}^m ||v||_{L^{2n}}^n ||z||_{L^2} \vspace{3mm}\nonumber\\
\displaystyle
&\leq & s ||c||_{L^2} ||z||_{L^2} + C e\|h\|_{L^\infty}^m \left( ||\nabla h||_{H^1} + ||\nabla z||_{H^1} \right)^n ||z||_{L^2}.
\label{z0}
\end{eqnarray}
}
The last inequality is a consequence of the injection $H^1 \hookrightarrow L^q$, with $q = 2n \geq 2$. Let us now estimate the derivatives of $z$ of order $p \in \{1, \dots, k+1 \}$. We differentiate Equation~\eqref{eq_z} $p$ times and multiply by $\nabla^p z$. 
{\setlength\arraycolsep{1pt}
\begin{eqnarray}
\displaystyle \frac{1}{2} \frac{d}{dt} \|\nabla^p z\|^2_{L^2} + K \|\nabla^{p+1} z\|^2_{L^2} &=& s \int_{\RR^2} \nabla^p c \nabla^p z - e \int_{\RR^2} \nabla^p \left( h^m |v|^n \right) \nabla^p z\nonumber\\
\displaystyle 
&=& -s \int_{\RR^2} \nabla^{p-1} c \nabla^{p+1} z + e \int_{\RR^2} \nabla^{p-1} \left( h^m |v|^n \right) \nabla^{p+1} z\nonumber\\
\displaystyle
&\leq& s ||\nabla^{p-1} c||_{L^2} ||\nabla^{p+1} z||_{L^2} + e ||\nabla^{p-1} (h^m |v|^n)||_{L^2} ||\nabla^{p+1} z||_{L^2}.\label{z1}
\end{eqnarray}
}

By adding the estimates~\eqref{z0} and ~\eqref{z1} for $p={1,\dots,k+1}$, we obtain:

\begin{equation}
\frac{1}{2} \frac{d}{dt} \|\nabla z\|_{H^{k}}^2 + K \|\nabla z\|_{H^{k+1}}^2 
\leq s\, \|c\|_{H^k} \|\nabla z\|_{H^{k+1}} + e \|h^m |v|^n\|_{H^k} \|\nabla z\|_{H^{k+1}}.
\label{z2}
\end{equation}

There remains to estimate the erosion term $h^m|v|^n$ in $H^k$ norm. We assumed that $n>3$ or $n=2$, which implies that $v\mapsto |v|^n$ is $\mathcal{C}^3(\RR^2; \RR)$. We estimate successively $\nabla^p(h^m|v|^n)$ for $p=1,2,3=k$. By using successively the injections $L^q\hookrightarrow H^1$ for $q\geq 2$ and $H^2\hookrightarrow L^\infty$,  one finds:
$$
\begin{array}{lll}
\displaystyle
\|\nabla(h^m |v|^n)\|_{L^2}\leq \|h^{m-1}\|_{L^\infty}\left(\|\nabla h\|_{H^2}+\|\nabla z\|_{H^2}\right)^n(\|h\|_{L^\infty}+\|\nabla h\|_{L^2}), \vspace{3mm}\\
\displaystyle
\|\nabla^2(h^m |v|^n)\|_{L^2}\leq \|h^{m-2}\|_{L^\infty}\left(\|\nabla h\|_{H^2}+\|\nabla z\|_{H^2}\right)^n(\|h\|_{L^\infty}+\|\nabla h\|_{H^1})^2, \vspace{3mm}\\
\displaystyle
\|\nabla^3(h^m |v|^n)\|_{L^2}\leq \|h^{m-3}\|_{L^\infty}\left(\|\nabla h\|_{H^3}+\|\nabla z\|_{H^3}\right)^n(\|h\|_{L^\infty}+\|\nabla h\|_{H^1})^2\left(\|h\|_{L^\infty}+\|\nabla h\|_{H^3}+\|\nabla z\|_{H^3}\right).
\end{array}
$$




We deduce the following estimate on the bottom topography:
{\setlength\arraycolsep{1pt}
\begin{eqnarray}
\displaystyle
\frac{1}{2} \frac{d}{dt} \|\nabla z\|_{H^{k}}^2 + K \|\nabla z\|_{H^{k+1}}^2&\leq& s \|c\|_{H^k} \|\nabla z\|_{H^{k+1}}\nonumber\\ 
\displaystyle
&&+C\|h^{m-3}\|_{L^\infty}\left(\|h\|_{L^\infty}+\|\nabla h\|_{H^3}+\|\nabla z\|_{H^3}\right)^{n+3}\|\nabla z\|_{H^{k+1}}.
\label{eq_in_energy_z}
\end{eqnarray}
}

Finally, we derive a priori estimate for the sediment concentration $c$: the basic energy estimates reads:
\begin{equation}
\displaystyle    
\frac{1}{2}\frac{d}{dt}\|c\|_{L^2}^2\leq \left(\|\nabla h\|_{H^3}+\|\nabla z\|_{H^3}\right)\|c\|_{L^2}^2+e\|h^{m-1}\|_{L^\infty}\left(\|\nabla h\|_{H^1}+\|\nabla z\|_{H^1}\right)^n\|c\|_{L^2}+(s+\|r\|_{L^\infty})\frac{\|c\|^2_{L^2 }}{h_{min}}.
\label{c1}
\end{equation}
The first term on the right hand side of~\eqref{c1} is related to the advection, the second term to the erosion and the third one to the sedimentation and the source term. Now, for all $p \in \{1, \dots, k \}$, we differentiate $p$ times Equation~\eqref{eq_c} and multiply it by $\nabla^p c$. There is no additional issue with respect to the former computations on $z$ and one finds:
\vspace{-4mm}
{\setlength\arraycolsep{1pt}
\begin{eqnarray}
\displaystyle 
\frac{1}{2} \frac{d}{dt} ||c||^2_{H^k} &\leq& C\left(\|\nabla h\|_{H^{k+1}}+\|\nabla z\|_{H^{k+1}}\right)\|c\|^2_{H^k}+(s+\|r\|_{H^k})\frac{\|c\|_{H^k}^2}{h_{min}}P\left(\frac{\|\nabla h\|_{H^{k+1}}}{h_{min}}\right)\nonumber\\
\displaystyle
&&+eC\|h^{m-4}\|_{L^\infty}\left(\|h\|_{L^\infty}+\|\nabla h\|_{H^3}+\|\nabla z\|_{H^3}\right)^{n+3}.
\label{eq_in_energy_c}
\end{eqnarray}
}
Here $P$ is some polynomial of degree three with positive coefficients. Let us denote
$$
\mathcal{Q}_h(a,b)=Ka^2+h_{min}b^2-\|h\|_{L^\infty}ab.
$$

By combining the estimates~\eqref{h11},~\eqref{eq_in_energy_h},~\eqref{z0},~\eqref{eq_in_energy_z} and~\eqref{eq_in_energy_c} and the Young inequality on products:
\begin{align*}
    \forall (a,b)\in\RR^2,\forall\varepsilon>0, \quad ab \leq \frac{\varepsilon}{2} a^2 + \frac{1}{2 \varepsilon} b^2,
\end{align*}
one finds that for any $\varepsilon>0$, there exists $C(\varepsilon)$ such that
\begin{equation}
\displaystyle 
\frac{d}{dt}\mathcal{E}(t)+\mathcal{Q}_{h}\left(\|\nabla h\|_{H^{k+1}}, \|\nabla z\|_{H^{k+1}} \right)\leq \varepsilon\left(\|\nabla h\|_{H^{k+1}}^2+\|\nabla z\|_{H^{k+1}}^2\right)+C(\varepsilon)\left(\|r\|_{H^k}^2+\mathcal{F}(h,z,c)(t)\right),
\label{eq-E}
\end{equation}
where 
{\setlength\arraycolsep{1pt}
\begin{eqnarray}
\displaystyle
\mathcal{F}(h,z,c)&=&\|h-h_{ref}\|^2+\|h\|_{L^\infty}\|\nabla z\|_{L^2}\|\nabla h\|_{L^2}+\|c\|_{L^2}\|z\|_{L^2}\nonumber\\
&&+\|c\|_{H^k}^2+\|c\|_{H^k}^4\left(1+(h_{min})^{-2}P^2\left(\frac{\|\nabla h\|_{H^{k+1}}}{h_{min}}\right)\right)\nonumber\\
\displaystyle
&&+\|h\|_{L^\infty}^m\left(\|\nabla h\|_{H^1}+\|\nabla z\|_{H^1}\right)^n\|z\|_{L^2}+\left(\|\nabla h\|_{H^k}+\|\nabla z\|_{H}^k\right)^4\nonumber\\
\displaystyle
&&+\|h^{m-3}\|_{L^\infty}^2\left(\|h\|_{L^\infty}+\|\nabla h\|_{H^k}+\|\nabla z\|_{H^k}\right)^{2(n+1)}\nonumber\\
&&+\|h^{m-4}\|_{L^\infty}^2\left(\|h\|_{L^\infty}+\|\nabla h\|_{H^k}+\|\nabla z\|_{H^k}\right)^{(n+1)}\|c\|_{H^k}.
\label{def-F}
\end{eqnarray}
}
From~\eqref{def-F}, one easily proves that there exists a constant $C$ depending only on $h_{min}$ and $\alpha>0$  such that 
$$
\displaystyle
\mathcal{F}(h,z,c)(t)\leq C\left(1+C\mathcal{E}(t)^\alpha\right)\mathcal{E}(t)
$$
Note that we have used the estimate:
$$
\displaystyle
\|h\|_{L^\infty}\leq h_{ref}+\|h-h_{ref}\|_{L^2}+C\|\nabla h\|_{H^1},\quad \|h^{m-i}\|_{L^\infty}\leq \|h\|_{L^\infty}^m(h_{min})^{-i}, i=3,4.
$$

Now, the quadratic form $\mathcal{Q}_h$ is positive semi-definite provided that $Kh_{min}>\|h\|_{L^\infty}^2/4$. Under the assumption that $\|h^0\|_{L^\infty}<\sqrt{Kk_{min}}$, there exists a time $T^*$ such that $\sup_{(0, T^*)}\|h\|_{L^\infty}\leq \sqrt{3Kk_{min}}$. Thus there exists $\varepsilon>0$ such that $\mathcal{Q}_{h}(a, b)\geq 2\varepsilon(a^2+b^2)$. We deduce from~\eqref{eq-E} that for all $t\in (0, T^*)$, one has:
\begin{equation}
\displaystyle 
\frac{d}{dt}\mathcal{E}(t)+\varepsilon\left(\|\nabla h\|_{H^{k+1}}^2+\|\nabla z\|_{H^{k+1}}^2\right)\leq C(\varepsilon)\|r\|_{H^k}^2+C\left(1+C\mathcal{E}(t)^\alpha\right)\mathcal{E}(t),
\label{eq-final}
\end{equation}

Now, we integrate  Equation~\eqref{eq-final} with respect to time: one finds
\begin{equation}
\displaystyle
\mathcal{E}(t)+\varepsilon\int_0^t \left( \|\nabla h\|_{H^{k+1}}^2+\|\nabla z\|_{H^{k+1}}^2 \right) \leq \mathcal{E}(0)+C\int_0^t\|r\|_{H^k}^2+C\int_0^t(1+C\mathcal{E}(s)^\alpha)\mathcal{E}(s)ds.
\end{equation}

By apply one more time a continuity argument, there exists $T_1\leq T^*$ ,  such that $\mathcal{E}(t)\leq 2\mathcal{E}(0)$ for all $t\in[0, T_1]$. This implies that

\begin{equation}
\displaystyle
\mathcal{E}(t)+\varepsilon\int_0^t \left( \|\nabla h\|_{H^{k+1}}^2+\|\nabla z\|_{H^{k+1}}^2 \right) \leq \mathcal{E}(0)+C\int_0^t\|r\|_{H^k}^2+C(1+2^\alpha C\mathcal{E}(0)^\alpha)\int_0^t\mathcal{E}(s)ds.
\end{equation}

By applying Gronwall lemma, one finds
$$
\begin{array}{ll}
\displaystyle
\mathcal{E}(t)\leq e^{Ct}\left(\mathcal{E}(0)+C\int_0^t\|r\|^2_{H^k}\right),\\
\displaystyle
\int_0^t \left( \|\nabla h\|_{H^{k+1}}^2+\|\nabla z\|_{H^{k+1}}^2 \right) \leq C\,e^{Ct}\left(\mathcal{E}(0)+C\int_0^t\|r\|^2_{H^k}\right).
\end{array}
$$
This concludes the proof of the energy estimate.

\end{proof}

\subsection{The approximate system} \label{approximate system}

In this section, we prove the existence of a sequence of solutions $(h_i, z_i, c_i)_{i \in \NN}$ to the following linear system:
\begin{subequations}
\begin{empheq}[left= \empheqlbrace]{align}
&\displaystyle \dt h_{i+1} -\dive(h_i \nabla h_{i+1}) - \dive(h_{i+1} \nabla z_i) = r(t,x), \label{eq_hi} \\
&\displaystyle \dt z_{i+1} = K \Delta z_{i+1} + sc_i - e h_i^m |v_i|^n, \label{eq_zi}\\
&\displaystyle \dt c_{i+1} + v_i.\nabla c_{i+1} = e h_i^{m-1} |v_i|^n - sc_i/h_i - rc_i/h_i, \label{eq_ci}\\
&\displaystyle h_i(0,x) = h(0,x), \, z_i(0,x) = z(0,x), \, c_i(0,x) = c(0,x). \nonumber
\end{empheq}
\label{eq_systapprox}
\end{subequations}

with initial conditions $h_0(t,x) = h^0(x)$, $z_0(t,x) = z^0(x)$ and $c_0(t,x) = c^0(x)$. Then we show that these solutions are uniformly bounded with respect to $i\in\mathbb{N}$ for a suitable Sobolev norm. In what follows, we will denote $h_{min}=inf_{x\in\\R^2}(h_0(x))/2$.\smallskip

The classical theory of parabolic equations and of linear transport equations provides conditions to obtain a well-posed system of equations. For the two parabolic equations~\eqref{eq_hi} and~\eqref{eq_zi}, we state the following result (see \cite{cherrier_linear_2012} for more details):

\begin{proposition}
\label{parab}
Assume that $h^0-h_{ref}, z^0 \in H^{k+1}(\RR^2)$, $r$, $sc_i - e h_i^m |v_i|^n \in \lhk$, $h_i \in L^\infty([0, T]\times \RR^2)$ with $h_i\geq h_{min}$ and $\nabla h_i, \nabla z_i \in L^1_T(H^k)$. Then there exists a unique solution $(h_{i+1}, z_{i+1})$ to the equations~\eqref{eq_hi},~\eqref{eq_zi}, and $h_{i+1}-h_{ref}, \, z_{i+1} \in \lhkde \cap C_T(H^{k+1}(\RR^2))$.
\end{proposition}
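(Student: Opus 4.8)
The plan is to treat the two equations of System~\eqref{eq_systapprox} separately, since \eqref{eq_zi} is a constant-coefficient heat equation that does not involve $h_{i+1}$, whereas \eqref{eq_hi} is a \emph{linear} parabolic equation whose coefficients are frozen from the previous iterates $h_i$ and $z_i$. For each equation I would check that the data meet the hypotheses of the classical existence, uniqueness and maximal-regularity theory for linear parabolic equations recalled in \cite{cherrier_linear_2012}, and then read off the claimed time-continuity from Proposition~\ref{fdtf}.

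First I would solve \eqref{eq_zi}. Writing $f_i := sc_i - e h_i^m |v_i|^n$, the equation is $\dt z_{i+1} - K\Delta z_{i+1} = f_i$ with $z_{i+1}(0)=z^0$. By hypothesis $f_i \in \lhk$ and $z^0 \in \hkun$, which are precisely the source and initial-trace regularities in the maximal-regularity class for the heat operator (here $K>0$ makes the principal part uniformly elliptic). The classical theory then yields a unique $z_{i+1}$ with $z_{i+1}\in\lhkde$ and $\dt z_{i+1}\in\lhk$; Proposition~\ref{fdtf} upgrades this to $z_{i+1}\in C_T(\hkun)$.

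Next I would solve \eqref{eq_hi}. Setting $u := h_{i+1}-h_{ref}$ and using that $h_{ref}$ is constant, the equation rewrites as
\[
\dt u - \dive(h_i\nabla u) - \nabla z_i\cdot\nabla u - (\Delta z_i)\,u = r + h_{ref}\,\Delta z_i .
\]
This is a linear parabolic equation for $u$ whose principal coefficient $h_i$ is uniformly elliptic, since $h_{min}\le h_i$ and $h_i\in L^\infty([0,T]\times\RR^2)$, and whose lower-order coefficients $\nabla z_i$ and $\Delta z_i$ are controlled by the embeddings of Section~\ref{useful properties} (in particular $H^2\hookrightarrow L^\infty$) together with the regularity of $z_i$. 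The right-hand side $\tilde r := r + h_{ref}\,\Delta z_i$ lies in $\lhk$, because $r\in\lhk$ and $\Delta z_i$ inherits the regularity of $z_i$. Invoking the linear parabolic theorem of \cite{cherrier_linear_2012} gives a unique $u\in\lhkde$. Reading $\dt u$ off the equation, the products $\dive(h_i\nabla u)=\nabla h_i\cdot\nabla u + h_i\Delta u$, $\nabla z_i\cdot\nabla u$ and $(\Delta z_i)\,u$ all lie in $\lhk$ by the algebra property of Proposition~\ref{prop_algebra}, so $\dt u\in\lhk$ and Proposition~\ref{fdtf} yields $u\in C_T(\hkun)$, i.e. $h_{i+1}-h_{ref}\in\lhkde\cap C_T(\hkun)$.

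The main obstacle is propagating the top-order spatial regularity $H^{k+2}$ through the \emph{variable}-coefficient principal part $\dive(h_i\nabla\,\cdot\,)$: unlike the constant-coefficient operator in the $z$ equation, gaining two derivatives on $u$ forces the coefficient $h_i$ to carry matching $H^{k+1}$ regularity, so that after differentiating the equation $k$ times the commutator terms $\nabla^j h_i\,\nabla^{p-j}u$ remain in $\lhk$. This is exactly where the algebra property (Proposition~\ref{prop_algebra}) and the Sobolev embeddings of Section~\ref{useful properties} enter, and it motivates the hypotheses imposed on $\nabla h_i$ and $\nabla z_i$. Uniqueness for both equations is standard: taking the difference of two solutions, testing it against itself, integrating in space, and applying a Gronwall argument shows that the difference vanishes identically.
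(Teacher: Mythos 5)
Your argument is correct and matches the paper's intent: the paper states this proposition without proof, simply citing the classical linear parabolic theory of \cite{cherrier_linear_2012}, and your decoupling of the constant-coefficient heat equation for $z_{i+1}$ from the frozen-coefficient divergence-form equation for $h_{i+1}-h_{ref}$, followed by Proposition~\ref{fdtf} for the time continuity, is exactly the argument that citation is meant to supply. One small caveat: your step $\tilde r = r + h_{ref}\,\Delta z_i \in \lhk$ uses more than the literal hypothesis $\nabla z_i \in L^1_T(H^k)$ (which only yields $\Delta z_i \in L^1_T(H^{k-1})$), but since the stronger regularity $z_i \in \lhkde\cap C_T(H^{k+1})$ is available at each step of the induction where the proposition is invoked, this is a defect of the proposition's stated hypotheses rather than of your proof.
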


For the transport equation~\eqref{eq_ci}, we use theorem $7.2.2$ in \cite{metivier_para-differential_2008} to state the result:
\begin{proposition}
\label{transport}
Assume that $c^0 \in H^{k}(\RR^2)$, $\, \nabla v_i \in L^\infty_T(H^{k-1})$, $e h_i^{m-1} |v_i|^n - sc_i/h_i - rc_i/h_i \in L^{1}_T(H^k)$ and $v_i \in L^1_T(W^{1,\infty})$. Then there exists a unique solution $c_{i} \in C_T(\hk)$ to the equation~\eqref{eq_ci}.
\end{proposition}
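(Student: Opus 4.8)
The plan is to read~\eqref{eq_ci} as a \emph{linear} transport equation $\dt c + v_i\cdot\nabla c = f_i$ in which the transport field $v_i$ and the right-hand side $f_i := e\,h_i^{m-1}|v_i|^n - s c_i/h_i - r c_i/h_i$ are given data, built from the already-constructed iterate $(h_i,z_i,c_i)$, and then to apply the abstract existence and uniqueness result for such equations, Theorem 7.2.2 of \cite{metivier_para-differential_2008}. Since the equation is linear in the unknown $c_{i+1}$, the entire content of the proof reduces to checking that $v_i$, $f_i$ and the initial datum carry exactly the regularity that theorem requires.

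First I would isolate the field and source and verify the hypotheses one at a time: $c^0 \in \hk$ for the datum; $f_i \in L^1_T(\hk)$ for the source; and, for the coefficient, the transport-type regularity $v_i \in L^1_T(W^{1,\infty})$ together with $\nabla v_i \in L^\infty_T(H^{k-1})$. These are precisely the assumptions stated in the proposition, so this step is immediate, and the theorem then delivers a unique $c_{i+1} \in C_T(\hk)$.

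If one instead wants a self-contained argument, the engine is the $\hk$ energy estimate. The $L^2$ estimate follows by multiplying the equation by $c$ and integrating, using $\int_{\RR^2}(v_i\cdot\nabla c)\,c = -\tfrac12\int_{\RR^2}(\dive v_i)\,c^2$, which gives $\frac{d}{dt}\|c\|_{L^2}^2 \leq \|\dive v_i\|_{L^\infty}\|c\|_{L^2}^2 + 2\|f_i\|_{L^2}\|c\|_{L^2}$, and Gronwall's lemma closes it since $\dive v_i \in L^1_T(L^\infty)$. For the top order one applies $\nabla^p$, $p\leq k$, to the equation and estimates the commutator $[\nabla^p, v_i\cdot\nabla]c$ in $L^2$ by a Moser--Kato--Ponce inequality of the form $C(\|\nabla v_i\|_{L^\infty}\|c\|_{\hk} + \|\nabla v_i\|_{H^{k-1}}\|\nabla c\|_{L^\infty})$; this is exactly where $v_i\in L^1_T(W^{1,\infty})$ and $\nabla v_i\in L^\infty_T(H^{k-1})$ are used. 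Summing over $p$ and applying Gronwall bounds $\|c\|_{\hk}$; existence then follows by regularizing $v_i$ and $f_i$ and passing to the limit using this uniform bound, uniqueness follows from the $L^2$ estimate by linearity, and the continuity $c\in C_T(\hk)$ is recovered from the equation together with the energy bounds.

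The hard part, in the self-contained version, is the commutator estimate: distributing the $p$ derivatives across $v_i\cdot\nabla c$ and balancing the extreme terms, where either all derivatives fall on $v_i$ (controlled by $\nabla v_i\in H^{k-1}$ paired with $\nabla c\in L^\infty$) or on $c$ (controlled by $\nabla v_i\in L^\infty$ paired with $c\in\hk$). Because the paper defers this to Theorem 7.2.2 of \cite{metivier_para-differential_2008}, the technical work is avoided and the only genuine task is the bookkeeping check that the coefficients and source of~\eqref{eq_ci} have the required regularity.
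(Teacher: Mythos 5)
Your proposal is correct and follows exactly the route the paper takes: the paper gives no further argument for this proposition beyond invoking Theorem 7.2.2 of \cite{metivier_para-differential_2008} for the linear transport equation with coefficient $v_i$ and source $e\,h_i^{m-1}|v_i|^n - sc_i/h_i - rc_i/h_i$, after noting that the stated hypotheses are precisely those required by that theorem. Your supplementary self-contained sketch via $H^k$ energy estimates and the Moser--Kato--Ponce commutator bound is a sound account of what that theorem's proof amounts to, but it is not needed and is not carried out in the paper.
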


We prove by induction that for all $i \in \NN$, System~\eqref{eq_systapprox} is well-posed.
First, for $i=0$, the functions of $(h_i,z_i,z_i)$ are time independent and $h^0\geq h_{min}$. Thus, as $T_0=T$ is finite, we have $sc_0-eh_0^m|v_0|^n\in L^2_{T_0}(H^k)$, $h_0\in L_{T_0}^\infty(\RR^2)$
and $\nabla h_0,\nabla z_0\in L^1_{T_0}(H^k)$. Then there exists a unique solution $(h_1, z_1)\in  L^2_{T_1}(H^{k+2}) \cap C_{T_1}(H^{k+1})$ of~\eqref{eq_hi} and~\eqref{eq_zi} with $0<T_1\leq T_0$ such that $h_1\geq h_{min}$. Similarly, there exists $c_1\in C_{T_1}(H^k)$ solution of~\eqref{eq_ci}. Now, if we assume that $h_{i}-h_{ref}, z_{i} \in \lhkde \cap C_{T_i}(H^{k+1})$ and $c_i\in C_{T_i}(H^k)$ then, one has $h_i\in h_{ref}+C_{T_i}(H^2)\subset L^\infty([0,T]\times\RR^2)$. Moreover, we have $\nabla h_i, \nabla z_i\in C_{T_i}(H^{k})\subset L^1_{T_i}(H^{k})$ and $c_i\in C_{T_i}(H^k)\subset L^2_{T_i}(H^k)$. Finally, one has
$$
\displaystyle
\|h_i^m|v_i|^n\|_{H^k}\leq C\|h_i^{m-3}\|_{L^\infty}\left(\|h_i\|_{L^\infty}+\|\nabla h_i\|_{H^k}+\|\nabla z_i\|_{H^k}\right)^{n+3}\in L^\infty_{T_i}(H^k)\subset L^2_{T_i}(H^k).
$$
Thus, the assumptions of Proposition ~\ref{parab} are satisfied and there exists a unique solution $h_{i+1}, z_{i+1}$ of (\ref{eq_hi},\ref{eq_zi}) such that $h_{i+1}-h_{ref}, \, z_{i+1} \in L^2_{T_{i+1}}(H^{k+2}) \cap C_{T_{i+1}}(H^{k+1}(\RR^2))$ with $0<T_{i+1}\leq T_i$ such that $h_{i+1}\geq h_{min}$. The existence of a solution $c_{i+1}\in C_{T_{i+1}}(H^k)$ follows similarly.


\begin{proposition}[Uniform bounds]
Denote 
$$
\displaystyle
\mathcal{E}_i(t)=\|h_i-h_{ref}\|_{L^2}+\|\nabla h_i\|_{H^k}+\|z_i\|_{H^{k+1}}+\|c_i\|_{H^k}.
$$
There exists $B>0$ and $0<T^*\leq T$ independent of $i\in\mathbb{N}$ such that, for all $i \in \NN$
$$
\displaystyle
\sup_{t\in[0, T^*]}\mathcal{E}_i(t)\leq B,\qquad {\rm and}\qquad h_i(t,x)\geq h_{min},\quad \forall x\in\RR^2. 
$$
Moreover, 
$$
\displaystyle
\int_0^{T^*} \left( \|\nabla h_ i\|^2_{H^{k+1}}+\|\nabla z_i\|_{H^{k+1}}^2 \right) \leq C(T^*)B,\quad \forall i\in\mathbb{N}^*.
$$
\label{uniformBounds}
\end{proposition}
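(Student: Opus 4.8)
The plan is to argue by induction on $i$, reproducing the linear energy estimates for System~\eqref{eq_systapprox} in the same spirit as Proposition~\ref{prop_apriori_estim}, but now tracking carefully that every constant depends only on the bound $B$ and on $h_{min}$, $K$, and never on $i$. The induction hypothesis is that on some interval $[0,T^*]$ one has $\sup_{[0,T^*]}\mathcal{E}_i\leq B$ and $h_i\geq h_{min}$; I then derive the same two properties for the $(i+1)$-th iterate on the \emph{same} interval, with the \emph{same} $B$ and $T^*$, the pair $(B,T^*)$ being fixed once and for all at the end. Throughout I work with the squared functional $\mathcal{E}_{i+1}^2$, which is equivalent to the sum of squared norms appearing in the definition of $\mathcal{E}_{i+1}$.

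First I would carry out the energy computations of Proposition~\ref{prop_apriori_estim} for the linear equations~\eqref{eq_hi},~\eqref{eq_zi},~\eqref{eq_ci}. Multiplying~\eqref{eq_hi} by $h_{i+1}-h_{ref}$ and its derivatives up to order $k+1$ and integrating by parts, the diffusion $\dive(h_i\nabla h_{i+1})$ produces a dissipation bounded below by $h_{min}\|\nabla h_{i+1}\|_{H^{k+1}}^2$ (using $h_i\geq h_{min}$), while the coefficients $h_i,\nabla z_i$ are controlled by $B$. Equation~\eqref{eq_zi} is a pure heat equation with source $sc_i-eh_i^m|v_i|^n$, giving the full dissipation $K\|\nabla z_{i+1}\|_{H^{k+1}}^2$ and a source bounded in $H^k$ by a function of $B$ through the erosion estimate already obtained in the proof of Proposition~\ref{prop_apriori_estim} (itself relying on Proposition~\ref{prop_algebra}). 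The transport equation~\eqref{eq_ci} is estimated exactly as for $c$ in that proof, all coefficients being frozen at the previous iterate and hence controlled by $B$.

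The structural difference with the nonlinear a priori estimate is that the coupling is now \emph{external}: the top-order cross term arising from $\dive(h_{i+1}\nabla z_i)$ has the form $\|h_{i+1}\|_{L^\infty}\|\nabla z_i\|_{H^{k+1}}\|\nabla h_{i+1}\|_{H^{k+1}}$, in which $\|\nabla z_i\|_{H^{k+1}}$ is a \emph{coefficient} rather than a dissipation of the current unknown. I would handle it with Young's inequality, absorbing $\varepsilon\|\nabla h_{i+1}\|_{H^{k+1}}^2$ into the $h_{min}$-dissipation and leaving a remainder $C(\varepsilon)\|h_{i+1}\|_{L^\infty}^2\|\nabla z_i\|_{H^{k+1}}^2$. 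This remainder is only bounded in $L^1$ in time, but that is exactly enough: by the induction hypothesis together with the second estimate of Proposition~\ref{prop_apriori_estim}, $\int_0^{T^*}\|\nabla z_i\|_{H^{k+1}}^2\leq C(T^*)B$. Collecting all contributions, $\mathcal{E}_{i+1}^2$ satisfies a differential inequality of the form
$$
\frac{d}{dt}\mathcal{E}_{i+1}^2+\varepsilon\left(\|\nabla h_{i+1}\|_{H^{k+1}}^2+\|\nabla z_{i+1}\|_{H^{k+1}}^2\right)\leq C(B)\left(1+\mathcal{E}_{i+1}^2\right)+C(B)\,g_i(t),
$$
where $g_i\in L^1(0,T^*)$ with $\int_0^{T^*}g_i\leq C(B,T^*)$ uniformly in $i$. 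Integrating and applying Gronwall's lemma yields
$$
\mathcal{E}_{i+1}^2(t)\leq e^{C(B)t}\left(\mathcal{E}_{i+1}(0)^2+C(B)\int_0^t\left(\|r\|_{H^k}^2+g_i\right)\right),
$$
with all constants independent of $i$ and the common initial value $\mathcal{E}_{i+1}(0)$ determined solely by $h^0,z^0,c^0$. Choosing $B$ large compared with this initial value and with $\|r\|_{L^2_{T_0}(H^k)}$, and then $T^*$ small enough that $e^{C(B)T^*}(\mathcal{E}_{i+1}(0)^2+C(B)(\|r\|^2+C(B,T^*)))\leq B^2$, closes the induction for the energy bound; the $L^2_t$ dissipation bound follows by integrating the same inequality. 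For the positivity I would use the embedding $H^{k+1}\hookrightarrow C^0_b$ (valid since $k+1=4\geq 2$) together with $h_{i+1}(t)-h^0=\int_0^t\dt h_{i+1}$ to bound $\|h_{i+1}(t)-h^0\|_{L^\infty}$ by $T^*$ times a function of $B$; since $h^0\geq 2h_{min}$, shrinking $T^*$ once more guarantees $h_{i+1}\geq h_{min}$ on $[0,T^*]$, again uniformly in $i$.

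I expect the main obstacle to be the bootstrap bookkeeping: one must verify that $C(B)$ genuinely depends only on $B$, $h_{min}$, $K$ through the frozen-coefficient estimates, and not on $i$, and that a single pair $(B,T^*)$ can be fixed \emph{before} the induction so that the self-consistent estimate $\mathcal{E}_{i+1}\leq B$ holds for every $i$ simultaneously. The delicate point within that is precisely the top-order coupling term, where only the time-integrated control of $\|\nabla z_i\|_{H^{k+1}}^2$ is available, so the Young splitting and the use of the $L^2_t(H^{k+2})$ bound from the previous iterate must be arranged consistently.
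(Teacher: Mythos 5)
Your proposal follows essentially the same architecture as the paper's proof: induction on $i$, frozen-coefficient energy estimates that mirror those of Proposition~\ref{prop_apriori_estim}, Gronwall's lemma, a single pair $(B,T^*)$ fixed from the data before the induction, and positivity of $h_{i+1}$ obtained by bounding $\|h_{i+1}(t)-h^0\|_{L^\infty}$ by a quantity that is small for small $T^*$. The one point where you genuinely depart from the paper is the top-order cross term coming from $\dive(h_{i+1}\nabla z_i)$. The paper asserts the pointwise-in-time inequality~\eqref{eq-Ei}, whose right-hand side involves only $\mathcal{E}_i(t)^\alpha\,\mathcal{E}_{i+1}(t)$; but at order $p=k+1$ this term produces $\|h_{i+1}\|_{L^\infty}\|\nabla z_i\|_{H^{k+1}}\|\nabla h_{i+1}\|_{H^{k+1}}$, and $\mathcal{E}_i$ controls only $\|\nabla z_i\|_{H^{k}}$, not $\|\nabla z_i\|_{H^{k+1}}$; moreover the quadratic-form absorption used in the a priori estimate is unavailable because the $z$-dissipation produced at step $i+1$ concerns $z_{i+1}$, not $z_i$. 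Your decision to treat this as an $L^1$-in-time coefficient controlled by the previous iterate's dissipation integral is the correct repair, and is more careful than what the paper writes.

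One caveat on how you close the bootstrap. Since $\|h_{i+1}\|_{L^\infty}^2$ cannot be bounded by $C(B)$ before the estimate is proved, the remainder is really of the form $C(\varepsilon)\,g_i(t)\,(1+\mathcal{E}_{i+1}^2(t))$ and $g_i$ enters the Gronwall \emph{exponent}. If you only invoke $\int_0^{T^*}g_i\leq C(T^*)B$ as stated, this integral does not shrink as $T^*\to0$, the exponential factor is of size $e^{cB}$, and the self-consistency condition $e^{C(B)T^*}(\cdots)\leq B^2$ cannot be met by enlarging $B$. The fix is to use the finer consequence of the $z_i$-energy identity, namely $\int_0^{T^*}\|\nabla z_i\|^2_{H^{k+1}}\leq K^{-1}\|\nabla z^0\|^2_{H^{k}}+T^*K^{-2}C(B)$, whose leading part is a data constant independent of $B$ and $T^*$; the Gronwall factor is then a fixed constant and $B$, then $T^*$, can be chosen consistently. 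This is bookkeeping rather than a new idea, but it is exactly the step your sketch leaves implicit, and it is also the step the paper itself glosses over.
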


\begin{proof}
We proceed by induction. One has easily $\displaystyle\mathcal{E}_0(t)=\mathcal{E}(0)\leq B$ and $h_0\geq h_{min}$ since $h_0,z_0,c_0$ are time independent. Following the strategy used to derive a priori estimates, one can prove that for some $\varepsilon>0$, for all $t\in[0, T]$,
\begin{equation}\label{eq-E1}
\displaystyle
\frac{d}{dt}\mathcal{E}_1(t)+\varepsilon\left(\|\nabla h_1\|^2_{H^{k+1}}+\|\nabla z_1\|^2_{H^{k+1}}\right)\leq C(\varepsilon)(\|r\|^2_{H^k}+\mathcal{E}_0(t)^\alpha)\mathcal{E}_{1}(t).
\end{equation}
for some constant $C(\varepsilon)$ depending only on $\varepsilon$ and $h^0$. By integrating Equation~\eqref{eq-E1} with respect to time, one finds:
$$
\displaystyle
\mathcal{E}_1(t)+\varepsilon\int_0^t \|\nabla h_1\|^2_{H^{k+1}}+\|\nabla z_1\|^2_{H^{k+1}}\leq \mathcal{E}(0)+C(\varepsilon)\int_0^t\|r\|_{H^k}^2+\int_0^tC(\varepsilon)\mathcal{E}_0(s)^\alpha\mathcal{E}_1(s)ds.
$$
Denote 
$$
\displaystyle
B=2\left(\mathcal{E}(0)+C(\varepsilon)\int_0^T\|r\|^2_{H^k}\right).
$$
We choose $\tilde{T}^*$ such that 
$$\displaystyle e^{\tilde{T}^*C(\varepsilon)B^\alpha}\leq 2.
$$
Then, by applying Gronwall's lemma, one obtains:
$$
\begin{array}{ll}
\displaystyle
\mathcal{E}_1(t)\leq B,\qquad \forall t\in[0, \tilde{T}^*],\\
\displaystyle
\int_0^{\tilde{T}^*}\|\nabla h_1\|^2_{H^{k+1}}+\|\nabla z_1\|^2_{H^{k+1}}\leq C(\varepsilon)\frac{1+\ln(2)}{\varepsilon}B.
\end{array}
$$
Next, we have 
$$
h_1=h^0+\int_0^t\dive(h_0\nabla (h_1)+h_1\nabla z_0))+r.
$$
Thus 
$\displaystyle \|h_1-h^0\|_{L^\infty}\leq C(\tilde{B}t+\sqrt{Bt})$
for some constant $C$ independent of the problem and related to Sobolev injections whereas $\tilde B=B+h_{ref}\sqrt{B}$. Then, there exists $0<T^*\leq\tilde{T}^*$ such that $C(\tilde{B}t+\sqrt{Bt})\leq h_{min}$ and we deduce that 
$$
\displaystyle
h_1\geq h_0-h_{min}\geq h_{min}\qquad \forall t\in[0, T^*].
$$
This proves the initial step for $i=1$. Now assume that $\mathcal{E}_i(t)\leq B$ and $h_i\geq h_{min}$ for all $t\in[0, T^*]$. The estimates on $h_{i+1}, z_{i+1}$ and $c_{i+1}$ are a direct consequence of the energy estimate:
\begin{equation}\label{eq-Ei}
\displaystyle
\frac{d}{dt}\mathcal{E}_{i+1}(t)+\varepsilon\left(\|\nabla h_{i+1}\|^2_{H^{k+1}}+\|\nabla z_{i+1}\|^2_{H^{k+1}}\right)\leq C(\varepsilon)(\|r\|^2_{H^k}+\mathcal{E}_{i}(t)^\alpha)\mathcal{E}_{i+1}(t)
\end{equation}
which is proved by following the strategy used to derive the a priori estimates. This completes the proof of the proposition.

\end{proof}

\begin{proposition}
The sequences $(\dt h_i)$ et $(\dt z_i)$ are uniformly bounded in $L^2_{T^*}(H^k)$. The sequence $(\dt c_i)$ is uniformly bounded on $L^2_{T^*}(H^{k-1})$.
\label{bounds_dt}
\end{proposition}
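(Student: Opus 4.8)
The plan is to isolate each time derivative directly from the defining equations \eqref{eq_hi}--\eqref{eq_ci} and to bound the resulting right-hand sides, using nothing more than the uniform bounds of Proposition~\ref{uniformBounds}. Recall that the latter gives, uniformly in $i$, both $\sup_{[0,T^*]}\mathcal{E}_i\le B$ and $\int_0^{T^*}\left(\|\nabla h_i\|_{H^{k+1}}^2+\|\nabla z_i\|_{H^{k+1}}^2\right)\le C(T^*)B$; equivalently, $h_i-h_{ref}$ and $z_i$ are bounded in $L^\infty_{T^*}(H^{k+1})\cap L^2_{T^*}(H^{k+2})$, $c_i$ is bounded in $L^\infty_{T^*}(H^k)$, and $v_i=-\nabla(h_i+z_i)$ is bounded in $L^\infty_{T^*}(H^k)$. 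The key structural observation is that every term appearing below factors as an $L^\infty_{T^*}$-bounded quantity times an $L^2_{T^*}$-bounded quantity, so that integration in time over $[0,T^*]$ closes the estimate.

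For the topography, I rewrite \eqref{eq_zi} as $\dt z_{i+1}=K\Delta z_{i+1}+sc_i-eh_i^m|v_i|^n$. The diffusion term obeys $\|\Delta z_{i+1}\|_{H^k}\le\|z_{i+1}\|_{H^{k+2}}$, which is in $L^2_{T^*}$ precisely by the parabolic gain $\int_0^{T^*}\|\nabla z_{i+1}\|_{H^{k+1}}^2$; the term $sc_i$ is uniformly in $L^\infty_{T^*}(H^k)$; and the erosion term is controlled by reusing the bound $\|h_i^m|v_i|^n\|_{H^k}\le C\|h_i^{m-3}\|_{L^\infty}\left(\|h_i\|_{L^\infty}+\|\nabla h_i\|_{H^k}+\|\nabla z_i\|_{H^k}\right)^{n+3}$ established in the a priori analysis, which stays bounded because $\mathcal{E}_i\le B$. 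This yields the uniform $L^2_{T^*}(H^k)$ bound on $\dt z_{i+1}$. For the fluid height I write \eqref{eq_hi} as $\dt h_{i+1}=\dive(h_i\nabla h_{i+1})+\dive(h_{i+1}\nabla z_i)+r$ and estimate $\|\dive(h_i\nabla h_{i+1})\|_{H^k}\le\|h_i\nabla h_{i+1}\|_{H^{k+1}}\le C\|h_i\|_{H^{k+1}}\|\nabla h_{i+1}\|_{H^{k+1}}$, using that $H^{k+1}$ is an algebra (Proposition~\ref{prop_algebra}); here $\|h_i\|_{H^{k+1}}$ is uniformly in $L^\infty_{T^*}$ while $\|\nabla h_{i+1}\|_{H^{k+1}}\in L^2_{T^*}$, and the term $\dive(h_{i+1}\nabla z_i)$ is treated identically. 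Together with $r\in L^2_{T^*}(H^k)$, this gives the uniform bound on $\dt h_{i+1}$ in $L^2_{T^*}(H^k)$.

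For the concentration, I rewrite \eqref{eq_ci} as $\dt c_{i+1}=-v_i\cdot\nabla c_{i+1}+eh_i^{m-1}|v_i|^n-sc_i/h_i-rc_i/h_i$. The transport term is the only one responsible for a loss of one derivative: since $H^{k-1}$ is an algebra, $\|v_i\cdot\nabla c_{i+1}\|_{H^{k-1}}\le C\|v_i\|_{H^{k-1}}\|\nabla c_{i+1}\|_{H^{k-1}}\le C\|v_i\|_{H^k}\|c_{i+1}\|_{H^k}$, a product of two uniformly $L^\infty_{T^*}$-bounded factors. This is exactly why $\dt c_i$ is bounded only in $L^2_{T^*}(H^{k-1})$ and not in $L^2_{T^*}(H^k)$. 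The erosion term $eh_i^{m-1}|v_i|^n$ enjoys the same uniform $H^k\subset H^{k-1}$ bound as above, while the quotient terms are handled by observing that, since $h_i\ge h_{min}>0$, the map $1/h_i$ differs from the constant $1/h_{ref}$ by an $H^{k+1}$ correction (composition estimate), so that $c_i/h_i$ is uniformly in $L^\infty_{T^*}(H^k)$ and $rc_i/h_i$ lies in $L^2_{T^*}(H^{k-1})$ by the product estimate with $r\in L^2_{T^*}(H^k)$.

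The argument is essentially bookkeeping, and the only points demanding care are the matching of derivative indices: the parabolic terms $\Delta z_{i+1}$ and $\dive(h_i\nabla h_{i+1})$ must be absorbed by the $L^2_{T^*}(H^{k+2})$ regularity of Proposition~\ref{uniformBounds}, not by the weaker $L^\infty_{T^*}(H^{k+1})$ bound, and the single derivative lost in the transport term of \eqref{eq_ci} must be tracked, which accounts for the $H^{k-1}$ regularity of $\dt c_i$. No genuine difficulty arises beyond this, as all nonlinear contributions are tamed by the algebra property (Proposition~\ref{prop_algebra}) together with the uniform bound $\mathcal{E}_i\le B$.
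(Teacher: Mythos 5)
Your proposal is correct and follows essentially the same route as the paper: the paper's (very brief) proof multiplies the $p$-th derivative of each equation by $\nabla^p\dt h_i$, $\nabla^p\dt z_i$, $\nabla^p\dt c_i$ and integrates, which after Cauchy--Schwarz is exactly your direct bound of the right-hand sides in $H^k$ (resp.\ $H^{k-1}$) using the uniform bounds of Proposition~\ref{uniformBounds}, the algebra property, and the $L^2_{T^*}(H^{k+2})$ gain for the second-order terms. Your accounting of why $\dt c_i$ only lands in $H^{k-1}$ matches the paper's statement as well.
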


\begin{proof}

For $p \in \{0, \dots, k\}$ we differentiate $p$ times the equations~\eqref{eq_hi},~\eqref{eq_zi}~\eqref{eq_zi} and multiply it by $\nabla^p \dt h_i$, $\nabla^p \dt z_i$ and $\nabla^p \dt c_i$ respectively, and integrate in space. Using Proposition ~\ref{uniformBounds}, we obtain bounds for $\|\dt h_i\|_{L^2_{T^*}(H^k)}$, $\|\dt z_i\|_{L^2_{T^*}(H^k)}$ and $\|\dt c_i\|_{L^2_{T^*}(H^{k-1})}$.

\end{proof}

\subsection{Convergence of the sequences} \label{convergence}

In what follows, we denote $T^*=T$ in order to simplify the notations.

\begin{proposition}
The sequences $(h_i)_{i\in\mathbb{N}}$, $(\dt h_i)_{i\in\mathbb{N}}$ and $(z_i)_{i\in\mathbb{N}}$, $(\dt z_i)_{i\in\mathbb{N}}$ are Cauchy sequences in $L^2_{T}(L^2)$. The sequences $(c_i)_{i\in\mathbb{N}}$ and $(\dt c_i)_{i\in\mathbb{N}}$ are Cauchy sequences in $C_{T}(L^2)$.
\label{cauchy}
\end{proposition}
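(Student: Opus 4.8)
The plan is to control the consecutive differences $\delta h_i:=h_{i+1}-h_i$, $\delta z_i:=z_{i+1}-z_i$, $\delta c_i:=c_{i+1}-c_i$ and to show that a suitable energy at level $i$ decays geometrically in $i$. Subtracting two consecutive copies of~\eqref{eq_systapprox}, one finds for the fluid height the parabolic difference equation
\begin{equation*}
\dt\delta h_i-\dive(h_i\nabla\delta h_i)=\dive(\delta h_{i-1}\nabla h_i)+\dive(h_{i+1}\nabla\delta z_{i-1})+\dive(\delta h_i\nabla z_{i-1}),
\end{equation*}
the heat equation $\dt\delta z_i-K\Delta\delta z_i=s\,\delta c_{i-1}-e(h_i^m|v_i|^n-h_{i-1}^m|v_{i-1}|^n)$ for the topography, and the transport equation $\dt\delta c_i+v_i\cdot\nabla\delta c_i=-\delta v_{i-1}\cdot\nabla c_i+\cdots$ for the concentration, where $\delta v_{i-1}=-\nabla(\delta h_{i-1}+\delta z_{i-1})$ and the dots stand for the differences of the erosion and sedimentation source terms. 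Since $h_i^m|v_i|^n$ and the maps $v\mapsto|v|^n$, $(c,h)\mapsto c/h$ are Lipschitz on the bounded set furnished by the uniform estimates of Proposition~\ref{uniformBounds} (recall $h_i\geq h_{min}$), all these source differences are controlled in $L^2$ by $\|\delta h_{i-1}\|_{L^2}+\|\nabla\delta h_{i-1}\|_{L^2}+\|\nabla\delta z_{i-1}\|_{L^2}+\|\delta c_{i-1}\|_{L^2}$.

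I would then run the base-level ($L^2$) energy estimates of Proposition~\ref{prop_apriori_estim}, testing the three equations against $\delta h_i$, $\delta z_i$, $\delta c_i$. Writing $a_i=\|\delta h_i\|_{L^2}^2$, $b_i=\|\delta z_i\|_{L^2}^2$, $d_i=\|\delta c_i\|_{L^2}^2$ and $E_i=\sup_{[0,T]}(a_i+b_i+d_i)+\int_0^T(\|\nabla\delta h_i\|_{L^2}^2+\|\nabla\delta z_i\|_{L^2}^2)$, the parabolic dissipations $h_{min}\|\nabla\delta h_i\|_{L^2}^2$ and $K\|\nabla\delta z_i\|_{L^2}^2$ absorb the self-interaction terms. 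The key observation is that \emph{every} cross-level term in which a difference $\delta h_i,\delta z_i,\delta c_i$ appears \emph{without} a gradient can be integrated in time by Cauchy--Schwarz, $\int_0^t\|f\|_{L^2}\|\nabla g\|_{L^2}\leq\sqrt{t}\,\bigl(\sup_{[0,t]}\|f\|_{L^2}^2\bigr)^{1/2}\bigl(\int_0^t\|\nabla g\|_{L^2}^2\bigr)^{1/2}$, producing an explicit factor $\sqrt t$ (the term carrying $r$, only $L^2$ in time, likewise contributes the vanishing factor $\|r\|_{L^2_t(L^\infty)}$). Denoting by $\omega(T)\to0$ the aggregate of these small prefactors, the $z$- and $c$-estimates then give $\sup_{[0,T]}b_i+K\!\int_0^T\|\nabla\delta z_i\|_{L^2}^2\leq\omega(T)E_{i-1}$ and $\sup_{[0,T]}d_i\leq\omega(T)E_{i-1}$.

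The one term resisting this treatment, which I expect to be the main obstacle, is $\int h_{i+1}\nabla\delta h_i\cdot\nabla\delta z_{i-1}$ arising from $\dive(h_{i+1}\nabla\delta z_{i-1})$ in the $\delta h_i$ equation: both factors carry a gradient, so no $\sqrt t$ can be extracted, and a crude Young inequality yields $\tfrac{h_{min}}2\|\nabla\delta h_i\|_{L^2}^2+\tfrac{\|h\|_{L^\infty}^2}{2h_{min}}\|\nabla\delta z_{i-1}\|_{L^2}^2$, whose time integral contributes $\tfrac{\|h\|_{L^\infty}^2}{2h_{min}}\!\int_0^T\|\nabla\delta z_{i-1}\|_{L^2}^2$ with an $O(1)$ coefficient (note $\|h\|_{L^\infty}\geq h_{min}$, so it is not small by itself). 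The resolution is that this $z$-dissipation is itself small: by the $z$-estimate one level down, $K\!\int_0^T\|\nabla\delta z_{i-1}\|_{L^2}^2\leq\omega(T)E_{i-2}$. Substituting this, and invoking the uniform-in-$i$ bound $\sup_{[0,T^*]}\|h_i\|_{L^\infty}^2\leq 3Kh_{min}$ which, as in Proposition~\ref{prop_apriori_estim}, descends from the hypothesis $Kh_{min}-\|h^0\|_{L^\infty}^2\geq0$, one gets $\sup_{[0,T]}a_i+\tfrac{h_{min}}2\!\int_0^T\|\nabla\delta h_i\|_{L^2}^2\leq\omega(T)(E_i+E_{i-1}+E_{i-2})$. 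Collecting the three estimates yields, for $T$ small enough that $\omega(T)\leq\tfrac1{10}$, the two-step contraction $E_i\leq\theta(E_{i-1}+E_{i-2})$ with $\theta<\tfrac12$; hence $E_i\to0$ geometrically and $\sum_iE_i<\infty$. This shows that $(h_i)$ and $(z_i)$ are Cauchy in $L^2_T(L^2)$ (with $\nabla\delta h_i,\nabla\delta z_i$ summable in $L^2_T(L^2)$ as well) and that $(c_i)$ is Cauchy in $C_T(L^2)$.

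Finally, the Cauchy property of the time derivatives follows by interpolation. The uniform bounds of Proposition~\ref{uniformBounds} control $(h_i-h_{ref},z_i)$ in $L^2_T(H^{k+2})\cap C_T(H^{k+1})$ and $(c_i)$ in $C_T(H^k)$ uniformly in $i$; interpolating this high-order control against the $L^2_T(L^2)$ (resp. $C_T(L^2)$) convergence just obtained upgrades $\delta h_i,\delta z_i\to0$ to $L^2_T(H^{s})$ and $\delta c_i\to0$ to $C_T(H^{s})$ for every $s<k+2$ (resp. $s<k$). Reading $\dt\delta h_i,\dt\delta z_i$ off the two parabolic difference equations and $\dt\delta c_i$ off the transport difference equation, and estimating the right-hand sides with the algebra property of $H^k$ (Proposition~\ref{prop_algebra}) together with the uniform bounds, then shows $\dt\delta h_i,\dt\delta z_i\to0$ in $L^2_T(L^2)$ and $\dt\delta c_i\to0$ in $C_T(L^2)$, which is exactly the claim.
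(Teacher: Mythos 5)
Your proposal is correct and its core is the same as the paper's: subtract consecutive iterates, run $L^2$-level energy estimates on $(\delta h_i,\delta z_i,\delta c_i)$ using the uniform bounds of Proposition~\ref{uniformBounds} and the Lipschitz control of the erosion/sedimentation nonlinearities on the set $\{h\geq h_{min}\}$, and then close the resulting two-level recursion. Where you diverge is in the closing mechanism: the paper integrates, applies Gronwall, and runs an induction on $i$ yielding the summable bound $C\,(Te^{CT})^i/i!$, so no further shrinking of $T$ is needed; you instead extract explicit $\sqrt{T}$ (or $\|r\|_{L^2_t}$) prefactors from every cross term in which a difference appears without a gradient, and convert the system into a two-step geometric contraction $E_i\leq\theta(E_{i-1}+E_{i-2})$, $\theta<1/2$, valid after a further reduction of $T$. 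The factorial route is slicker when it applies, but it is delicate precisely at the term you single out, $\int h_{i+1}\nabla\delta z_{i-1}\cdot\nabla\delta h_i$, where both factors carry gradients and the level-$(i-1)$ dissipation enters with an $O(1)$ coefficient rather than under an extra factor of $t$; the paper's displayed computation writes this term with $\delta z_{i-1}$ in place of $\nabla\delta z_{i-1}$ and bounds it by $\|\delta z_{i-1}\|_{L^2}$ alone, which glosses over exactly this point. Your resolution --- absorbing it via the $z$-dissipation one level down, at the price of a two-step rather than one-step recursion --- is a legitimate and arguably more careful way to handle it. Your final paragraph on the time derivatives (interpolation of the $L^2$ convergence against the uniform high-order bounds, then reading $\partial_t\delta h_i$, $\partial_t\delta z_i$, $\partial_t\delta c_i$ off the difference equations) fills in what the paper leaves as a one-line remark and is consistent with how the paper later upgrades convergence in Subsection~\ref{convergence}.
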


\begin{proof}
In the following we denote the quantities of the form $f_{i+1}-f_i$ by $\delta f_i$. For all $i\geq 1$, the equations for $\delta h_i$, $\delta z_i$, $\delta c_i$ are written as:
\begin{subequations}
\begin{empheq}[left= \empheqlbrace]{align}
& \displaystyle \dt \delta h_i - \dive (h_i \nabla \delta h_i + \delta h_{i-1}  \nabla h_i) - \dive (h_{i+1} \nabla \delta z_i + \delta h_i \nabla z_{i-1}) = 0, \label{eq_delta_hi} \\[0.3em]
& \displaystyle \dt \delta z_i - K \delta \Delta z_i = s \delta c_i - e \left(h_i^m (|v_i|^n - |v_{i-1}|^n) + |v_{i-1}|^n (h_i^m - h_{i-1}^m) \right), \label{eq_delta_zi} \\[0.3em]
& \begin{array}{ll}
    \displaystyle \hspace{-1,8mm} \dt \delta c_i + \delta v_{i-1} \nabla c_{i+1} + v_{i-1} \delta \nabla c_i & \displaystyle \hspace{-2,5mm} = e \left(h_i^{m-1} (|v_i|^n - |v_{i-1}|^n) + |v_{i-1}|^n (h_i^{m-1} - h_{i-1}^{m-1}) \right) \smallskip \\ 
    & \displaystyle -(s+r) \left(\displaystyle \frac{c_i}{h_i} - \frac{c_{i-1}}{h_{i-1}} \right). \end{array} \label{eq_delta_ci}
\end{empheq}
\label{eq_delta}
\end{subequations}

\paragraph{Bounds on $\delta h_i$, $\delta z_i$ and $\delta c_i$: }
We multiply the equation~\eqref{eq_delta_hi} by $\delta h_i$, and integrate over space: 
\begin{align*}
\frac{1}{2} \frac{d}{dt} \|\delta h_i\|_{L^2}^2 + \int_{\RR^2} (h_i \nabla h_{i+1} - h_{i-1} \nabla h_i) \nabla \delta h_i + \int_{\RR^2} (\nabla z_i h_{i+1} - \nabla z_{i-1} h_i)\nabla \delta h_i = 0.
\end{align*}
First, on the one hand, we have 
$$
\displaystyle
\int_{\RR^2} (h_i \nabla h_{i+1} - h_{i-1} \nabla h_i) \nabla \delta h_{i} = \int_{\RR^2} h_i (\nabla \delta h_i)^2 + \delta h_{i-1}  \nabla h_i \nabla \delta h_i \geq h_{min} \|\nabla \delta h_i\|_{L^2}^2 + \int_{\RR^2} \delta h_{i-1}  \nabla h_i \nabla \delta h_i.
$$
On the other hand, we have the estimate:
\begin{align*}
\displaystyle \int_{\RR^2} (\nabla z_i h_{i+1} - \nabla z_{i-1} h_i)\nabla \delta h_i &= \frac{1}{2} \int_{\RR^2} \nabla z_i \nabla (\delta h_i)^2 + \int_{\RR^2} \delta z_{i-1} h_i \nabla \delta  h_i = -\frac{1}{2} \int_{\RR^2} \Delta z_i (\delta h_i)^2 + \int_{\RR^2} \delta z_{i-1} h_i \nabla \delta  h_i	\\
& \displaystyle \leq C \|\delta h_i\|^2_{L^2} + \| h_i\|_{L^\infty} \|\delta z_{i-1}\|_{L^2} \|\nabla \delta h_i\|_{L^2}\\
& \displaystyle \leq  C \|\delta h_i\|^2_{L^2} + \frac{1}{2 h_{min}} \|\delta z_{i-1}\|^2_{L^2} + \frac{h_{min}}{2} \|\nabla \delta h_i\|^2_{L^2}.
\end{align*}
Consequently, we obtain:
\begin{align}
&\frac{d}{dt} \|\delta h_i\|_{L^2}^2 + h_{min} \|\nabla \delta h_i\|^2_{L^2} \leq C \|\delta h_i\|_{L^2}^2 + C \|\delta h_{i-1}\|_{L^2}^2 + C \|\delta z_{i-1}\|_{L^2}^2.
\label{eq_bound_delta_hi}
\end{align}

We proceed similarly for $\delta z_i$. By using Equation~\eqref{eq_delta_zi}, one finds:
\begin{align*}
& \displaystyle \frac{d}{dt} \|\delta z_i\|_{L^2}^2 + 2K \|\nabla \delta z_i\|^2 \leq s (\|\delta c_{i}\|^2_{L^2} + \|\delta z_i\|^2_{L^2}) + 2e\left| \int_{\RR^2} (h_i^m |v_i|^n - h_{i-1}^m |v_{i-1}|^n) \delta z_i\right|.
\end{align*}
In order to bound the right-hand term, we use the inequality $||x|^n-|y|^n| \leq n |x-y| \max(|x|,|y|)^{n-1}$:
\begin{align*}
h_i^m |v_i|^n - h_{i-1}^m |v_{i-1}|^n &= h_i^m (|v_i|^n-|v_{i-1}|^n) + |v_{i-1}|^n (h_i^m - h_{i-1}^m)\\
&\leq n h_i^m |v_i-v_{i-1}| \, \max(\|v_{i-1}\|_{L^\infty},\|v_i\|_{L^\infty})^{n-1}\\ 
&+ \frac{m}{h_{min}} |v_{i-1}^n |h_i - h_{i-1}| \, \max(\|h_i\|_{L^\infty},\|h_{i-1}\|_{L^\infty})^{m}.
\end{align*}
Consequently, as $h_i, h_{i-1}$ and $v_i, v_{i-1}$ are uniformly bounded in $L^\infty([0,T] \times \RR^2)$:
\begin{align*}
\displaystyle \int_{\RR^2} (h_i^m |v_i|^n - h_{i-1}^m |v_{i-1}|^n) \delta z_i &\leq
C (\|v_i - v_{i-1}\|_{L^2} + \|h_i-h_{i-1}\|_{L^2})\|\delta z_i\|_{L^2}\\
& \displaystyle \leq C (\|\nabla \delta h_{i-1}\|_{L^2} + \|\nabla \delta z_{i-1}\|_{L^2} + \|\delta h_{i-1}\|_{L^2})\|\delta z_i\|_{L^2}\\
& \displaystyle \leq C (\|\nabla \delta h_{i-1}\|_{L^2}^2 + \|\nabla \delta z_{i-1}\|_{L^2}^2) + \|\delta h_{i-1}\|_{L^2}^2 + \|\delta z_i\|_{L^2}^2 ).
\end{align*}
Thus,
\begin{align}
&\frac{d}{dt} \|\delta z_i\|_{L^2}^2 + 2K \|\nabla \delta z_i\|^2_{L^2} \leq C \|\delta z_i\|_{L^2}^2 + C (\|\nabla \delta h_{i-1}\|_{L^2}^2 + \|\nabla \delta z_{i-1}\|_{L^2}^2 + \|\delta h_{i-1}\|_{L^2}^2).
\label{eq_bound_delta_zi}
\end{align}

Finally, for $\delta c_i$, we use the same method as before with the equation~\eqref{eq_delta_ci} and we obtain:
\begin{align}
\displaystyle \frac{1}{2} & \frac{d}{dt} \|\delta c_i\|_{L^2}^2 = -\int \left( \delta v_{i-1} \nabla c_{i+1} + v_i \nabla \delta c_i \right)\delta c_i + e \int (h_i^m |v_i|^n - h_{i-1}^m |v_{i-1}|^n) \delta c_i - (s+r) \int \left(\frac{c_i}{h_i} - \frac{c_{i-1}}{h_{i-1}} \right) \delta c_i \nonumber \\
& \displaystyle= -\int \left( \delta v_{i-1}\nabla c_{i+1}\delta c_i - \frac{1}{2} \dive(v_i) (\delta c_i)^2 \right) + e \int (h_i^m |v_i|^n - h_{i-1}^m |v_{i-1}|^n) \delta c_i - (s+r) \int \left(\frac{c_i}{h_i} - \frac{c_{i-1}}{h_{i-1}} \right) \delta c_i \nonumber \\
& \displaystyle \leq \|\delta v_{i-1}\|_{L^2} \|\nabla c_{i+1}\|_{L^\infty} \|\delta c_i\|_{L^2} + \|\nabla v_i\|_{L^\infty} \|\delta c_{i}\|_{L^2}^2 + C (\|\nabla \delta h_{i-1}\|_{L^2}^2 + \|\nabla \delta z_{i-1}\|_{L^2}^2 + \|\delta h_{i-1}\|_{L^2}^2) \nonumber \\
& \displaystyle \qquad + \frac{s+\|r\|_{H^2}}{h_{min}} \left(\|c_i\|_{L^2} + \|c_{i-1}\|_{L^2}\right) \|\delta c_i\|_{L^2}.
\label{eq_bound_delta_ci}
\end{align}

Then we add the inequalities~\eqref{eq_bound_delta_hi},~\eqref{eq_bound_delta_zi} and~\eqref{eq_bound_delta_ci} and integrate on $[0,t]$ with $0\leq t\leq T$:
\begin{align*}
    \displaystyle \|\delta h_i\|_{L^2}^2 & + \|\delta z_i\|_{L^2}^2 + \|\delta c_i\|_{L^2}^2
    + h_{min} \int_0^t \|\nabla \delta h_i\|^2_{L^2} dt + 2K \int_0^t \|\nabla \delta z_i\|^2_{L^2} dt\\
    & \displaystyle \leq \|\delta h_0\|_{L^2}^2 + \|\delta z_0\|_{L^2}^2 + \|\delta c_0\|_{L^2}^2
    + C \int_0^t \left(\|\delta h_i\|_{L^2}^2 + \|\delta z_i\|_{L^2}^2+ \|\delta c_i\|_{L^2}^2 \right) dt\\
    & \displaystyle + C \int_0^t \left(\|\delta h_{i-1}\|_{L^2}^2 + \|\delta z_{i-1}\|_{L^2}^2 + \|\delta c_{i-1}\|_{L^2}^2 \right) dt + C \int_0^t (\|\nabla \delta h_{i-1}\|_{L^2}^2 + \|\nabla \delta z_{i-1}\|_{L^2}^2) dt.
\end{align*}

We apply the Gronwall lemma and obtain, for all $t\in [0, T]$,
\begin{align*}
    & \displaystyle\|\delta h_i\|_{L^2}^2 + \|\delta z_i\|_{L^2}^2 + \|\delta c_i\|_{L^2}^2
    + h_{min} \int_0^t \|\nabla \delta h_i\|^2_{L^2} dt + 2K \int_0^t \|\nabla \delta z_i\|^2_{L^2} dt
    \leq C \biggl[\|\delta h_0\|_{L^2}^2 + \|\delta z_0\|_{L^2}^2 + \|\delta c_0\|_{L^2}^2 \\
    & \displaystyle+ \int_0^t \left(\|\delta h_{i-1}\|_{L^2}^2 + \|\delta z_{i-1}\|_{L^2}^2 + \|\delta c_{i-1}\|_{L^2}^2 \right) dt + \int_0^t (\|\nabla \delta h_{i-1}\|_{L^2}^2 + \|\nabla \delta z_{i-1}\|_{L^2}^2) dt \biggr] e^{Ct}.
\end{align*}

With this inequality, we deduce by induction on $i$ that $\forall i \in  \NN$,
\begin{align*}
\|\delta h_i\|_{L^2_T(L^2)}^2 + \|\delta z_i\|_{L^2_T(L^2)}^2 + \|\delta c_i\|_{L^\infty_T(L^2)}^2 &\leq C \frac{\left(Te^{CT} \right)^i}{i!} \left(\|\delta h_0\|_{H^1}^2 + \|\delta z_0\|_{H^1}^2 + \|\delta c_0\|_{L^2}^2 \right).
\end{align*}

Consequently the series $\sum\|\delta h_i\|_{L^2_T(L^2)}^2$, $\sum \|\delta z_i\|_{L^2_T(L^2)}^2$ and $\sum\|\delta c_i\|_{L^\infty_T(L^2)}^2$ converge, thus the sequences $(h_i)_{i\in\mathbb{N}}$, $(z_i)_{i\in\mathbb{N}}$, $(c_i)_{i\in\mathbb{N}}$  are Cauchy sequences in the required spaces.

\paragraph{Bounds on $\dt \delta h_i$, $\dt \delta z_i$ and $\dt \delta c_i$:}
Like the estimates in Proposition ~\ref{bounds_dt}, we use the system~\eqref{eq_delta} and Proposition ~\ref{uniformBounds} to obtain the bounds.
\end{proof}

By Proposition ~\ref{cauchy} there exists $h-h_{ref}, z \in L^2_T(L^2)$ such that $h_i-h_{ref}$ converges to $h-h_{ref}$ and $z_i$ converges to $z$ in $L^2_T(L^2)$. As $(h_i-h_{ref})$ and $(z_i)$ are uniformly bounded in $\lhkde$, we obtain by interpolation that $\forall \, 1/2 < \theta < 1$, $\forall i>j \in \NN$,
\begin{align*}
\displaystyle \int_0^T \|h_i - h_j\|_{H^{\theta(k+2)}}^2 dt &\leq \int_0^T \left( \|h_i-h_j\|_{L^2}^{1-\theta} \|h_i-h_j\|_{H^{k+2}}^{\theta} \right) dt \\
& \displaystyle\leq \|h_i-h_j\|_{C_T(L^2)}^{1-\theta} \int_0^T \|h_i-h_j\|_{H^{k+2}}^{\theta} dt\\
& \displaystyle\leq \|h_i-h_j\|_{C_T(L^2)}^{1-\theta} T^{2/(2-\theta)} \|h_i-h_j\|_{L^2_T(H^{k+2})}^{\theta}\\
& \displaystyle\leq \|h_i-h_j\|_{C_T(L^2)}^{1-\theta} T^{2/(2-\theta)} (2C_1)^{\theta} \underset{n \to +\infty}{\longrightarrow} 0.
\end{align*}
Therefore $\forall k/2 < s < k$, the sequences $(h_i-h_{ref})$ and $(z_i)$ are Cauchy sequences in $L^2_T(H^{s+2})$, thus $h-h_{ref}$, $z \in L^2_T(H^{s+2})$. Moreover $\dt h_i$ converges to $\dt h$ in $L^2_T(L^2)$ and $(\dt h_i)$ is uniformly bounded in $\lhk$, similarly for $\dt z$. Consequently $\forall s <k$, $\dt h$, $\dt z \in L^2_T(H^s)$, thus by Proposition ~\ref{fdtf}, $h-h_{ref}$, $z \in C_T(H^{s+1})$. Finally the a priori estimates on $h$ and $z$ allow to conclude that $h-h_{ref}$, $z \in \lhkde \cap C_T(H^{k+1})$.
In particular, as $k+1 =4$, $(h_i-h_{ref})$, $(\nabla h_i)$, $(\nabla^2 h_i)$, $(\dt h_i)$ and $(z_i)$, $(\nabla z_i)$, $(\nabla^2 h_i)$, $(\dt z_i)$ converges in $C([0,T] \times \RR^2)$.

Now we consider $(c_i)_{i\in\mathbb{N}}$. By Proposition ~\ref{cauchy}, there exists $c \in C_T(L^2)$ limit of $(c_i)_{i\in\mathbb{N}}$ in this space. We know that $(c_i)_{i\in\mathbb{N}}$ is uniformly bounded in $C_T(H^k)$, so by interpolation: $\forall \, 0 < \theta < 1$, $\forall i>j \in \NN$,
\begin{align*}
\underset{t \in [0,T]}{\text{sup}} \|c_i - c_j\|_{H^{\theta(k)}} &\leq \underset{t \in [0,T]}{\text{sup}} \|c_i-c_j\|_{L^2}^{1-\theta} \underset{t \in [0,T]}{\text{sup}} \|c_i-c_j\|_{H^{k}}^{\theta} \\
&\leq \|c_i-c_j\|_{C_T(L^2)}^{1-\theta} (\|c_i\|_{C_T(H^{k})}+\|c_j\|_{C_T(H^{k})})^{\theta}\\
&\leq C \|c_i-c_j\|_{C_T(L^2)}^{1-\theta} \underset{n,\,m \to +\infty}{\longrightarrow} 0.
\end{align*}
Therefore $(c_i)_{i\in\mathbb{N}}$ is a Cauchy sequence, and thus converges to $c$ in the space $C_T(H^s)$, for all $s<k$. Moreover $(\dt c_i)$ converges to $\dt c$ in $C_T(L^2)$ and is uniformly bounded in $C_T(H^{k-1})$, so it converges to $c$ in $C_T(H^{s-1})$. And we conclude by the a priori estimate on $c$ that $c \in C_T(H^k)$.
To conclude, $(c_i)_{i\in\mathbb{N}}$, $(\dt c_i)_{i\in\mathbb{N}}$ and $(\nabla c_i)_{i\in\mathbb{N}}$ converge in $C_T(\RR^2)$ thus we can take the limit in the equations, and $(h,z,c)$ is solutions of System~\eqref{eq_WPsyst}. This concludes the proof of the existence.

\subsection{Uniqueness} \label{uniqueness}

\begin{proposition}
Let $(h_1, z_1, c_1)$ and $(h_2, z_2, c_2)$ be two solutions of System~\eqref{eq_WPsyst} satisfying the hypotheses of Theorem~\ref{th}. Then $\forall t \in [0,T]$ :
\begin{align*}
\|h_1(t)-h_2(t)\|^2_{L^2} + \|z_1(t)-z_2(t)\|^2_{L^2} + \|c_1(t)-c_2(t)\|^2_{L^2}
\leq \left(\|h^0_1 - h^0_2\|_{L^2}^2 + \|z^0_1 - z^0_2\|_{L^2}^2 + \|c_1^0 - c^0_2\|_{L^2}^2 \right) e^{CT}
\end{align*}
\end{proposition}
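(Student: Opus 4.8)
The plan is to adapt the Cauchy-sequence estimate of Proposition~\ref{cauchy} to the difference of two genuine solutions. Write $\delta h := h_1 - h_2$, $\delta z := z_1 - z_2$, $\delta c := c_1 - c_2$ and note $\delta v := v_1 - v_2 = -\nabla(\delta h + \delta z)$. Subtracting the two copies of System~\eqref{eq_WPsyst} and using the identities $h_1\nabla h_1 - h_2\nabla h_2 = h_1\nabla\delta h + \delta h\,\nabla h_2$ and $h_1\nabla z_1 - h_2\nabla z_2 = \delta h\,\nabla z_1 + h_2\nabla\delta z$ yields linear equations for $(\delta h,\delta z,\delta c)$ whose coefficients are the (fixed) solutions $(h_i,z_i,c_i)$. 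By Theorem~\ref{th} both solutions satisfy $h_i - h_{ref}, z_i \in C_T(H^{k+1})$ with $k=3$ and $c_i \in C_T(H^k)$; since $H^2 \hookrightarrow L^\infty$, all of $h_i, \nabla h_i, \Delta z_i, \nabla z_i, v_i, \nabla v_i, \nabla c_i$ are bounded in $L^\infty([0,T]\times\RR^2)$, and $h_i \geq h_{min} > 0$. These uniform bounds are what I would use to control every coefficient below.

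Next I would run the basic $L^2$ energy estimate on each difference equation. Multiplying the $\delta h$-equation by $\delta h$ and integrating, the term $\int h_1|\nabla\delta h|^2$ produces the coercive contribution $h_{min}\|\nabla\delta h\|_{L^2}^2$, the term $\int\delta h\,\nabla z_1\cdot\nabla\delta h = -\tfrac12\int(\Delta z_1)(\delta h)^2$ is harmless since $\Delta z_1\in L^\infty$, and $\int\delta h\,\nabla h_2\cdot\nabla\delta h$ is absorbed by Young's inequality into $\varepsilon\|\nabla\delta h\|_{L^2}^2 + C\|\delta h\|_{L^2}^2$; the only genuinely coupling term left is $\int h_2\,\nabla\delta z\cdot\nabla\delta h$. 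For the $\delta z$-equation, multiplying by $\delta z$ yields the coercive term $K\|\nabla\delta z\|_{L^2}^2$, while the erosion difference $h_1^m|v_1|^n - h_2^m|v_2|^n$ is controlled using $\big||v_1|^n-|v_2|^n\big|\leq n\,|v_1-v_2|\max(|v_1|,|v_2|)^{n-1}$ together with the $L^\infty$ bounds, giving the pointwise bound $C(|\nabla\delta h| + |\nabla\delta z| + |\delta h|)$ and hence, after Young, a contribution $\varepsilon(\|\nabla\delta h\|_{L^2}^2+\|\nabla\delta z\|_{L^2}^2) + C(\|\delta h\|_{L^2}^2+\|\delta z\|_{L^2}^2+\|\delta c\|_{L^2}^2)$. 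For the $\delta c$-equation I would integrate the transport term by parts, $\int v_2\cdot\nabla\delta c\,\delta c = -\tfrac12\int(\dive v_2)(\delta c)^2$, estimate $\delta v\cdot\nabla c_1$ using $\nabla c_1\in L^\infty$, and rewrite the source difference as $\tfrac{c_1}{h_1}-\tfrac{c_2}{h_2} = \tfrac{\delta c}{h_1} - c_2\tfrac{\delta h}{h_1 h_2}$, which is controlled thanks to $h_i\geq h_{min}$ and the $L^\infty$ bounds on $c_2$ and $r$; all the resulting gradient terms again appear with a small prefactor $\varepsilon$.

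Summing the three estimates, the left-hand side carries $\tfrac{d}{dt}\mathcal E + h_{min}\|\nabla\delta h\|_{L^2}^2 + K\|\nabla\delta z\|_{L^2}^2$ with $\mathcal E := \tfrac12(\|\delta h\|_{L^2}^2+\|\delta z\|_{L^2}^2+\|\delta c\|_{L^2}^2)$, while the right-hand side is $\|h_2\|_{L^\infty}\|\nabla\delta z\|_{L^2}\|\nabla\delta h\|_{L^2} + \mathcal O(\varepsilon)(\|\nabla\delta h\|_{L^2}^2 + \|\nabla\delta z\|_{L^2}^2) + C\mathcal E$. The crux, and the step I expect to be the main obstacle, is to absorb the genuine cross term $\|h_2\|_{L^\infty}\|\nabla\delta z\|_{L^2}\|\nabla\delta h\|_{L^2}$ into the diffusive budget: this is precisely the positivity of the quadratic form $\mathcal Q_h$ from Proposition~\ref{prop_apriori_estim}, and after discarding the $\varepsilon$-terms it requires $4Kh_{min}\geq\|h_2\|_{L^\infty}^2$. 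By the hypothesis $Kh_{min}\geq\|h^0\|_{L^\infty}^2$ of Theorem~\ref{th}, together with the continuity argument that keeps $\sup_{[0,T]}\|h\|_{L^\infty}$ close to $\|h^0\|_{L^\infty}$ on the short existence interval, this inequality holds with room to spare, so the entire gradient contribution can be dropped. One is then left with $\tfrac{d}{dt}\mathcal E \leq C(t)\,\mathcal E$, where $C(t)$ depends on the $L^\infty$ norms of the coefficients and on $(s+\|r(t)\|_{L^\infty})/h_{min}$, hence is integrable on $[0,T]$ since $r\in L^2_T(H^k)$ and $H^k\hookrightarrow L^\infty$. Gronwall's lemma then gives $\mathcal E(t)\leq \mathcal E(0)\,e^{CT}$, which is the claimed estimate and, when the initial data coincide, forces $\delta h = \delta z = \delta c = 0$.
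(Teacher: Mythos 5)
Your proposal is correct and follows essentially the same route as the paper, which writes the same difference equations and then invokes the $L^2$ energy argument of Proposition~\ref{cauchy}. You are in fact slightly more explicit than the paper on the one delicate point: for two genuine solutions the cross term $\int h\,\nabla\delta z\cdot\nabla\delta h$ couples the two gradients at the same level (no index shift to hide behind), and absorbing it requires exactly the positivity of $\mathcal{Q}_h$ guaranteed by the hypothesis $K h_{min}\geq \|h^0\|_{L^\infty}^2$ together with the continuity-in-time control of $\|h\|_{L^\infty}$.
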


In particular when the initial conditions are the same for both solutions, these solutions are the same. Consequently this proposition shows the uniqueness of the solution $(h,z,c)$ of the theorem.

\begin{proof}
We first write the equations verified by $\delta h := h_1-h_2$, $\delta z := z_1-z_1$ and $\delta c := c_1-c_2$ :
\begin{subequations}
\begin{empheq}[left= \empheqlbrace]{align}
& \displaystyle\dt \delta h - \dive (h_1 \nabla \delta h + \delta h  \nabla h_2) - \dive (h_1 \nabla \delta z + \delta h \nabla z_2) = 0\label{eq_delta_h} \\[0.3em]
& \displaystyle\dt \delta z - K \delta \Delta z = s \delta c - e \left(h_1^m (|v_1|^n - |v_2|^n) + |v_2|^n (h_1^m - h_{2}^m) \right) \label{eq_delta_z} \\
& \displaystyle\dt \delta c + \delta v \nabla c_2 + v_2 \delta \nabla c =  e \left(h_1^{m-1} (|v_1|^n - |v_{2}|^n) + |v_{2}|^n (h_1^{m-1} - h_{2}^{m-1}) \right) -(s+r) \left(\frac{c_1}{h_1} - \frac{c_{2}}{h_{2}} \right) \label{eq_delta_c}
\end{empheq}
\end{subequations}

Then the bound is obtained with a similar process as in the proof of Proposition ~\ref{cauchy}.
\end{proof}

 \section{Spectral stability of constant states} \label{stability}

In this section, we consider the flow over a topography that is an inclined plane at time $t=0$. We assume that $r=0$ and we study the spectral stability of constant states. We expect that instability will provide a mechanism for pattern formation. We first write System~\eqref{syst_complet} in a non-dimensional form and then linearize this system around constant states. Then we explore numerically the stability of the system. Finally, we carry out the spectral stability analysis by using Routh-Hurwitz theorem: this provides necessary and sufficient conditions for constant states to be spectrally stable. However, these conditions do not provide any insight on the nature of the instabilities. We complete this analysis by an asymptotic expansion of the spectrum around the origin and in the high frequency regime.


\subsection{Non-dimensionalization and linearization of the system} \label{adimensionalisation}

We write System~\eqref{syst_complet} in a non-dimensional form in order to identify the important parameters. We introduce several characteristic quantities : $Z$ is a characteristic eroded height, $H$ a characteristic water height, $L$ a characteristic wavelength and $T$ a characteristic time. We chose $T$ to be the necessary time to erode the soil of a height $Z$, with an erosion speed $e$. Thus $T$ verifies $e\,T = Z$. As $v = \mu \tan \theta e_1 - \mu \nabla (z+h)$, we fix the characteristic water velocity $V = \mu$. We introduce the dimensionless variables :
$$
h' := \frac{h}{H}, \quad z' = \frac{z}{Z}, \quad v' := \frac{v}{V}, \quad c' := \frac{c}{c_{sat}}\quad
x' := \frac{x}{L}, \quad t' := \frac{e}{Z} t.
$$
In order to simplify the notations, we will assume $H = Z$. Then, dropping the primes, System~\eqref{syst_complet} is written as:
\begin{align*}
\left\{ \begin{array}{lll}
\displaystyle
\dt h + \frac{Z V \tan \theta}{eL}\dx h = \frac{Z^2 V}{e L^2} \dive (h \nabla (h+z)), \vspace{2mm} \\
\displaystyle
h\dt c + \frac{Z V \tan \theta}{eL} h \dx c = \frac{Z V}{eL^2} h \nabla (h+z).\nabla c + \frac{\rho_s}{c_{sat}} h^m |\tan \theta e_1 - \frac{Z}{L} \nabla(h+z)|^n - \frac{s}{e} \frac{\rho_s}{c_{sat}} c, \vspace{2mm}\\
\displaystyle
\dt z = \frac{ZK}{eL^2} \Delta z - h^m |\tan \theta e_1 - \frac{Z}{L} \nabla(h+z)|^n + \frac{s}{e} c.
\end{array} \right.
\end{align*}
To simplify the equations, we set $\frac{Z}{L} = \frac{e}{V}$, and we define $\alpha := \frac{Z}{L} = \frac{e}{V}$, $K := \frac{ZK}{eL^2} = \frac{K}{LV}$. The system reads :
\begin{align}
\left\{ \begin{array}{lll}
\displaystyle
\dt h + \tan \theta \dx h = \alpha \dive (h \nabla (h+z)), \vspace{1mm}\\
\displaystyle
h\dt c +  \tan \theta h \dx c = \alpha{h}  \nabla (h+z).\nabla c + \frac{\rho_s}{c_{sat}} h^m |\tan \theta e_1 - \alpha \nabla(h+z)|^n - \frac{\rho_s s}{c_{sat} e} c,\\
\displaystyle
\dt z = K \Delta z - h^m |\tan \theta e_1 - \alpha \nabla(h+z))|^n + \frac{s}{e} c.
\end{array} \right.
\label{systadim}
\end{align}
The stationary states of Equation~\eqref{systadim} for a flat surface, denoted by $(\underline{h}, \underline{c}, \underline{z})$ verify:
\begin{align*}
     \forall t \in \RR^+, \forall (x,y) \in \Omega, \quad 
     \displaystyle
     \left\{ \begin{array}{lll}
          h(t,x,y) &= \underline{h}>0,\\
          c(t,x,y) &= \displaystyle \underline{c} = \frac{e}{s} \underline{h}^m \tan^n \theta >0,\\
          z(t,x,y) &= 0. 
     \end{array} \right.
\end{align*}
This means that the erosion and the deposition process equilibrate each other and the bottom is not eroded, whereas the fluid height is a constant.\\

Let $(\underline{h} + h, \underline{c} + c, z)$, with $|h|,|c|,|z|\ll 1$, be a small perturbation of the constant state, and solution of System~\eqref{systadim}. Then, at first order, this solution verifies the following linear system:
\begin{align}
\left\{ \begin{array}{lll}
\displaystyle
\dt h + \tan \theta \dx h = \alpha \underline{h} \, \Delta (h+z) \vspace{1mm}\\
\displaystyle
\dt c + \tan \theta \dx c = \frac{\rho_s}{c_{sat}} \frac{s}{e} \left( \frac{m \underline{c}}{\underline{h}^2} h - \frac{\alpha n \underline{c}}{\underline{h} \tan \theta} \dx (h+z) - \frac{c}{\underline{h}} \right) \vspace{2mm}\\
\displaystyle
\dt z = K \Delta z - \frac{s}{e} \left( \frac{m \underline{c}}{\underline{h}} h - \frac{\alpha n \underline{c}}{\tan \theta} \dx (h+z) - c \right)
\end{array} \right.
\label{eq_systlin}
\end{align}
We denote $f = (h,c,z)^T$. Then $f$ verifies the equation $\dt f = A_0 f + A_1 \dx f + A_2 \Delta f$ with:
\begin{align*}
A_0 =
\left[ \begin{array}{ccc}
\displaystyle
0 & 0 & 0\\
\vspace{0.2cm} \displaystyle
\frac{a m \underline{c}}{\underline{h}} & - a & 0\\
\vspace{0.2cm} \displaystyle
- \frac{a m \underline{c}}{\bar \rho_s} & \displaystyle \frac{a \underline{h}}{\bar \rho_s} & 0
\end{array} \right],
& \quad A_1 =
\left[ \begin{array}{ccc}
\displaystyle -\tan \theta & 0 & 0\\
\vspace{0.2cm} 
\displaystyle - \frac{\alpha a n \underline{c}}{\tan \theta} & \displaystyle -\tan \theta & \displaystyle - \frac{\alpha a n \underline{c}}{\tan \theta}\\
\vspace{0.2cm} 
\displaystyle \frac{\alpha a n \underline{h} \, \underline{c}}{\bar \rho_s \tan \theta} & 0 & \displaystyle \frac{\alpha a n \underline{h} \, \underline{c}}{\bar \rho_s \tan \theta}
\end{array} \right],
&  A_2 = 
\left[ \begin{array}{ccc}
\displaystyle \alpha \underline{h} & 0 & \displaystyle \alpha \underline{h}\\
\displaystyle
0 & 0 & 0\\
0 & 0 & K
\end{array} \right],
\end{align*}
where we have denoted $\displaystyle a = \frac{s \rho_s}{e \underline{h} c_{sat}}$ and $\bar \rho_s = \frac{\rho_s}{c_{sat}}$.\\

We apply the Fourier transform in space and the equation verified by $\hat{f}$, the Fourier transform in space of $f$, is:
$$\dt \hat f = \left(A_0 + i \xi A_1 - (\xi^2 + \eta^2)A_2 \right) \hat{f} := A(\xi, \eta) \hat{f}.$$

Consequently, in order to study the stability of the system, we have to determine the sign of the real part of the eigenvalues of the matrix $A(\xi,\eta)$. These eigenvalues are denoted by $\lambda^i (\xi, \eta)$ with $i \in \{1,2,3\}$ and the associated eigenvectors are denoted by $V^i (\xi, \eta)$. The expressions for $\lambda^i$ are not explicit: in the next Subsection, we compute numerically the stability of the system. Then Subsection~\ref{stab RH} gives a stability result on the domain. This result is completed  with the asymptotic study of the eigenvalues at low ($|\xi|^2+|\eta|^2\ll 1$) and high ($|\xi|^2+|\eta|^2\ll 1$) frequencies.

\subsection{Numerical exploration of stability} \label{sec_cclstab}

In this section, we explore numerically the stability of the system~\eqref{eq_systlin}. For that purpose, we have computed numerically the three eigenvalues of the matrix $A(\xi,\eta)$, and the system is stable if and only if the real parts of these eigenvalues are negative. Since $A(\xi,-\eta)=A(\xi,\eta)$ and $A(-\xi,\eta)=\overline{A(\xi,\eta)}$, the real part of the spectrum remains unchanged under the transformation $\xi\mapsto-\xi$ and $\eta\mapsto-\eta$. Thus, we examine the behaviour of the system in the top right quarter of the plane.
The choice of parameters for these computations is the same as in Section~\ref{sec_simu} (unless otherwise specified), see Table~\ref{tab_param} for their values. 

In Figure~\ref{stabtr}, we have represented an illustration of the stability of the system when $K>0$, where
$$ \displaystyle K_e=\frac{5\times 10^{-4}}{3600}m^2s^{-1}. $$
The domain represented is a bounded subset of the plane $(\xi,\eta) \subset \RR^2$, and the color represent the stability: the system is stable in the green area, and unstable in the red area. We clearly see the stabilizing effect of the creep effect: when $K$ is higher, the stable area is larger. It seems that the unstable area is bounded. This will be confirmed by Proposition ~\ref{prop_stab_high}. 

\begin{figure}[h]
	\centering
	\begin{subfigure}[b]{0.32\textwidth}
          \centering
          \includegraphics[width=\textwidth]{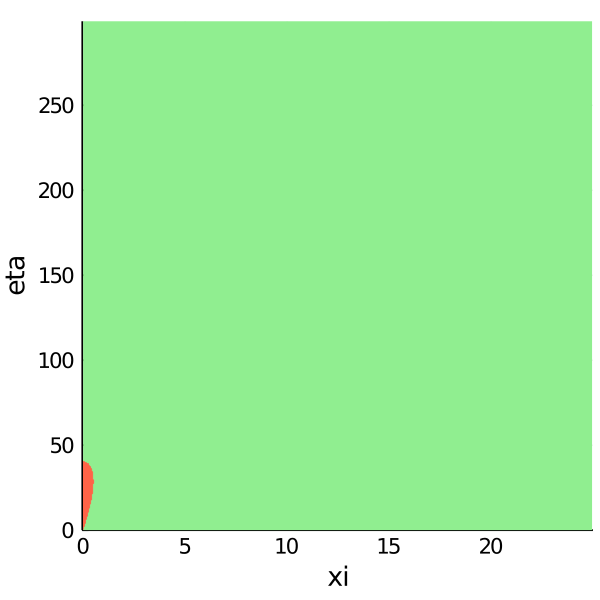}
          \caption{$K = K_e$}
          \label{stabtr_e}
     \end{subfigure}
     \begin{subfigure}[b]{0.32\textwidth}
          \centering
          \includegraphics[width=\textwidth]{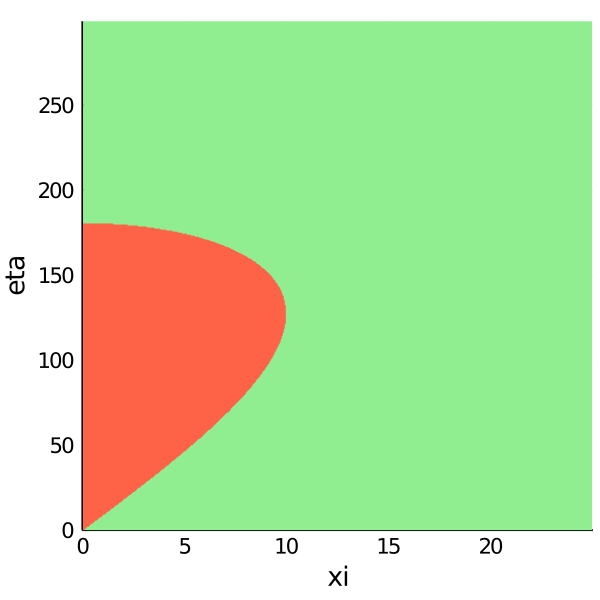}
          \caption{$K = K_e /20$}
          \label{stabtr_e/20}
     \end{subfigure}
     \begin{subfigure}[b]{0.32\textwidth}
          \centering
          \includegraphics[width=\textwidth]{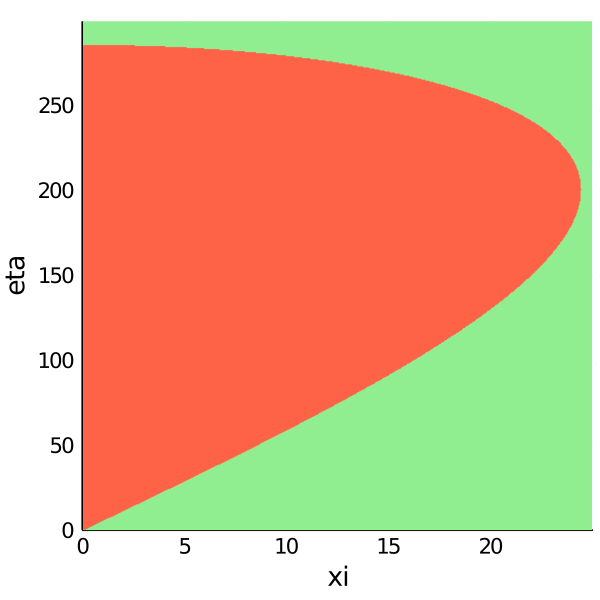}
          \caption{$K = K_e / 50$}
          \label{stabtr_e/50}
     \end{subfigure}
     \caption{Stability diagrams in the plane $(\xi,\eta) \subset \RR^2$, for $K>0$. The system is unstable in the red area, and stable in the green area.}
     \label{stabtr}
\end{figure}

Then, in Figure~\ref{stabK=0} there is no creep effect: $K=0$, and we represent various stability diagrams for several values of the ratio $n/m$. The value of $m$ is the same as in Table~\ref{tab_param}: $m = 1.6$, and $n$ vary between $m/2$ and $10m$. We observe that the behaviour of the system changes with the ratio $m/n$, but the unstable area always seems to be unbounded.

\begin{figure}[h!]
	\centering
     \begin{subfigure}[b]{0.24\textwidth}
          \centering
          \includegraphics[width=\textwidth]{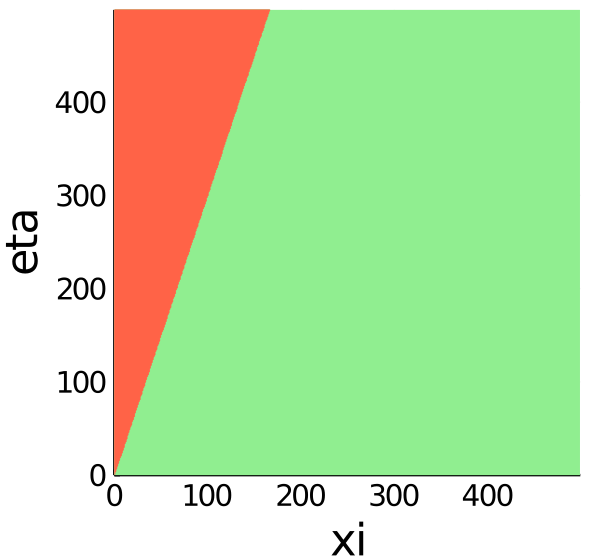}
          \caption{$n = 10m$}
          \label{stabK=0_n=10m}
     \end{subfigure}
     \begin{subfigure}[b]{0.24\textwidth}
          \centering
          \includegraphics[width=\textwidth]{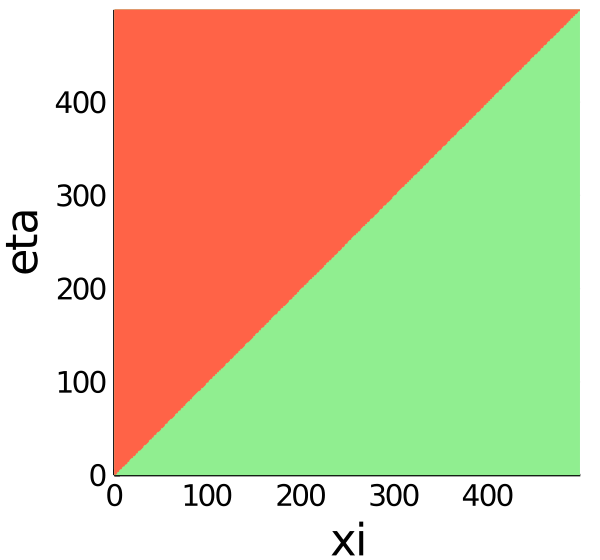}
          \caption{$n = 2m$}
          \label{stabK=0_n=2m}
     \end{subfigure}
     \begin{subfigure}[b]{0.24\textwidth}
          \centering
          \includegraphics[width=\textwidth]{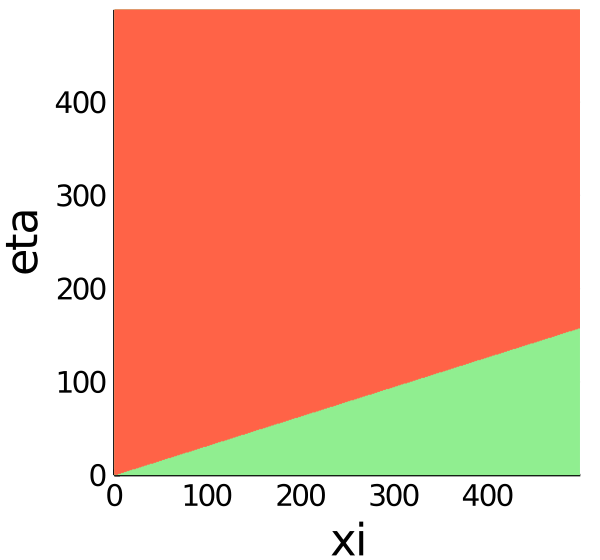}
          \caption{$n = 1.1 m$}
          \label{stabK=0_n=1.1m}
     \end{subfigure}
     \begin{subfigure}[b]{0.24\textwidth}
          \centering
          \includegraphics[width=\textwidth]{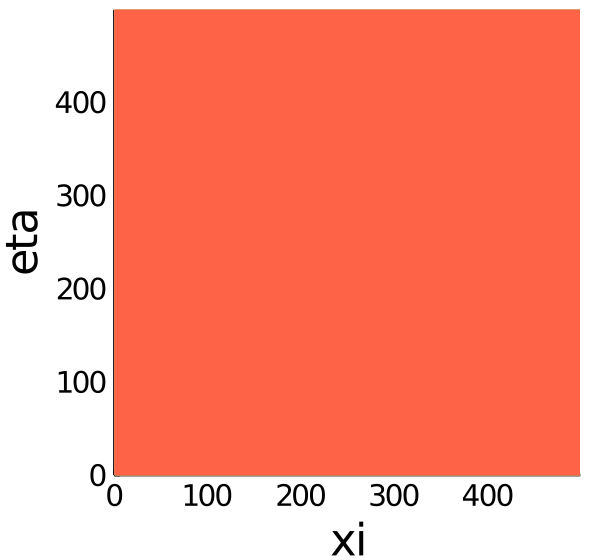}
          \caption{$n = m/2$}
          \label{stabK=0_n=m/2}
     \end{subfigure}
     \caption{Stability diagrams in the plane $(\xi,\eta) \subset \RR^2$, for $K=0$ and various values of $n/m$. The system is stable in the green area and unstable in the red area.}
     \label{stabK=0}
\end{figure}

When $n\gg m$ as in Figure~\ref{stabK=0_n=10m}, the stable area is bigger than the unstable area, and the system is unstable only for perturbations of transverse dominant direction. When $n\approx m$, as in Figure~\ref{stabK=0_n=1.1m}, the system is stable only for longitudinal perturbations. Finally, when $n<m$ the system seems to be unstable at all frequencies, as in Figure~\ref{stabK=0_n=m/2}.\\

The stability exploration of the system~\eqref{eq_systlin} should be quantified by theoretical results, in order to identify the types of instabilities and the transitions between stability and instability. This is done in the following Section.

\subsection{Stability analysis} \label{stab RH}

The following theorem provides a necessary and sufficient condition for the stability of the system, depending on the parameters of the model.
\begin{theorem}
     If $K >0$, there exists a constant $\gamma > 0$ independent of $n$ such that System~\eqref{eq_systlin} is spectrally stable at all frequencies $(\xi,\eta)  \in \RR_*^2$ if and only if 
     $$K  \geq \alpha m \underline{h} \underline{c}/ \bar \rho_s \quad \text{and} \quad n < \gamma.$$
     If $K = 0$, System~\eqref{eq_systlin} is spectrally unstable. More precisely, for $(\xi,\eta)  \in \RR_*^2 := \RR^2 \setminus \{(0,0)\}$, the system is stable if and only if 
     $$m < \frac{\bar \rho_s}{\underline{c}} \quad \text{and} \quad \eta^2 m < \xi^2 (n-m).$$ In particular, if $n \leq m$ then the system is always unstable.
     \label{th_stabDomain}
\end{theorem}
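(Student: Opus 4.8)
The plan is to characterise, for each wave vector $(\xi,\eta)$, the sign of the real parts of the three roots of the characteristic polynomial
\[
p(\lambda;\xi,\eta)=\lambda^3+P_2\lambda^2+P_1\lambda+P_0,
\]
whose coefficients $P_j=P_j(\xi,\eta)$ are \emph{complex} because of the $i\xi A_1$ term. I would first compute $P_2=-\operatorname{tr}A$, $P_1$ (the sum of the principal $2\times2$ minors) and $P_0=-\det A$ as polynomials in $i\xi$ and $k^2:=\xi^2+\eta^2$. The key point is that the particular form of the steady state produces large cancellations through the identities
\[
\frac{am\underline{c}}{\underline{h}}\cdot\frac{a\underline{h}}{\bar\rho_s}=a\cdot\frac{am\underline{c}}{\bar\rho_s},\qquad
\frac{\alpha a n\underline{c}}{\tan\theta}\cdot\frac{a\underline{h}}{\bar\rho_s}=a\cdot\frac{\alpha a n\underline{h}\,\underline{c}}{\bar\rho_s\tan\theta}
\]
relating the entries of $A_0$ and $A_1$. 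These collapse $P_0$ to an expression whose purely advective part is proportional to $i\,\xi\tan\theta\,\frac{\alpha a\underline{h}\,\underline{c}}{\bar\rho_s}\,(m k^2-n\xi^2)=i\,\xi\tan\theta\,\frac{\alpha a\underline{h}\,\underline{c}}{\bar\rho_s}\bigl(m\eta^2-(n-m)\xi^2\bigr)$, the factor $m\eta^2-(n-m)\xi^2$ already announcing the cone in the statement. By the symmetry $A(-\xi,\eta)=\overline{A(\xi,\eta)}$ I may restrict to $\xi,\eta\ge0$.

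Next I would apply the Routh--Hurwitz test in its form for complex coefficients. The cleanest route is to evaluate $p$ on the imaginary axis, $p(i\omega)=R(\omega)+iI(\omega)$ with
\[
R(\omega)=-\operatorname{Re}(P_2)\,\omega^2-\operatorname{Im}(P_1)\,\omega+\operatorname{Re}(P_0),\qquad
I(\omega)=-\omega^3-\operatorname{Im}(P_2)\,\omega^2+\operatorname{Re}(P_1)\,\omega+\operatorname{Im}(P_0),
\]
and to note that a marginally stable mode exists exactly when $R$ and $I$ share a real root $\omega$. Since $\operatorname{Re}(P_2)=a+k^2(\alpha\underline{h}+K)>0$ and the configuration with $K$ large is stable, a continuity argument in the parameters shows that the stable region is the connected component bounded by these crossing surfaces, so it suffices to locate them.

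For $K=0$ this is explicit. Then $\operatorname{Re}(P_0)=0$, so $R(\omega)=-\omega\bigl(\operatorname{Re}(P_2)\,\omega+\operatorname{Im}(P_1)\bigr)$ vanishes at $\omega=0$ and at one nonzero value. The root $\omega=0$ is shared with $I$ precisely when $\operatorname{Im}(P_0)=0$, i.e. on the cone $m\eta^2=(n-m)\xi^2$; this is the first stated boundary, and $n\le m$ makes the stable cone empty. The remaining, frequency-independent condition $m<\bar\rho_s/\underline{c}$ I would obtain from the high-frequency limit: when $K=0$ the $c$- and $z$-equations carry no diffusion, so $h$ relaxes fast to the slaved value $h\simeq-z$ (a flat free surface) and the two surviving slow modes obey a $2\times2$ system whose trace has real part $a\bigl(\tfrac{m\underline{c}}{\bar\rho_s}-1\bigr)$; requiring this to be negative is exactly $m\underline{c}<\bar\rho_s$.

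For $K>0$ the same scheme is carried out, but now the worst wave vectors are oblique, which is where the two thresholds come from. I would expand $p$ near the origin, where $A_0$ has the eigenvalue $0$ with a two-dimensional eigenspace (the first identity above guarantees $A_0$ is diagonalizable, with no Jordan block); the two small eigenvalues are then governed by the $2\times2$ reduction of $i\xi A_1-k^2A_2$ onto this plane, and requiring both to stay in the left half-plane for all directions $(\xi,\eta)$ yields the threshold $K\ge\alpha m\underline{h}\,\underline{c}/\bar\rho_s$. The second condition $n<\gamma$ comes from controlling the advective imaginary parts, which scale linearly in $n$: for large $n$ the common-root condition of $R$ and $I$ fails at some oblique frequency, and a careful estimate shows the threshold is a constant $\gamma$ depending on the remaining parameters but not on $n$. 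The main obstacle is precisely this last step: the complex Routh--Hurwitz inequalities for the full cubic are algebraically heavy, and proving both that the oblique low-frequency reduction produces exactly the clean threshold $\bar K=\alpha m\underline{h}\,\underline{c}/\bar\rho_s$ and that the $n$-bound is a genuine constant $\gamma$ requires tracking all the cancellations uniformly in $(\xi,\eta)$.
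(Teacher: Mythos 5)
Your framework (a complex-coefficient cubic analysed via Routh--Hurwitz / imaginary-axis crossings) is the same one the paper uses, which applies the complex Routh--Hurwitz criterion of Morris to $Q(\lambda)=iP(i\lambda)$ after the shift $Y=X+i\xi\tan\theta$; but as written your argument has two genuine gaps. First, for $K=0$ your marginal-stability analysis only tracks the common root of $R$ and $I$ at $\omega=0$, which locates the cone $m\eta^2=(n-m)\xi^2$ where an eigenvalue of $A(\xi,\eta)$ vanishes; it does not exclude a Hopf-type crossing at the \emph{nonzero} root $\omega=-\operatorname{Im}(P_1)/\operatorname{Re}(P_2)$ of $R$. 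That second crossing is exactly what the paper's condition $\Delta_4>0$ controls, and it is a nontrivial frequency-dependent inequality: after cancellations it reduces to $a(1+\bar K)-\bar h M\geq 0$, i.e.\ $m\leq \bar\rho_s/\underline{c}+K\bar\rho_s/(\alpha\underline{h}\,\underline{c})$, which must hold for all $(\xi,\eta)$. Your derivation of $m<\bar\rho_s/\underline{c}$ from a high-frequency slaving heuristic recovers the right threshold, but only as a necessary condition in the limit $\xi^2+\eta^2\to\infty$; without the $\Delta_4$ computation (or an equivalent) you have not shown that no finite oblique frequency inside the cone is destabilised through a purely imaginary pair. The accompanying continuity argument (``the stable region is the connected component bounded by the crossing surfaces'') is also delicate, because real parts can change sign through the boundary of the frequency domain: at $(\xi,\eta)\to(0,0)$ the matrix $A_0$ has a semisimple double zero eigenvalue and the relevant real parts are of order $\xi^2+\eta^2$, so stability can be lost ``through the origin'' without any interior imaginary-axis crossing. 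This is why the paper supplements the algebraic criterion with separate low- and high-frequency expansions.

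Second, and more seriously, for $K>0$ you establish only the necessity of $K\geq\alpha m\underline{h}\,\underline{c}/\bar\rho_s$ (from the low-frequency reduction onto the kernel of $A_0$) and you explicitly leave open both the existence of the constant $\gamma$ and the sufficiency of the two conditions at \emph{all} frequencies; that is the substantive content of the theorem. The paper's resolution is concrete: it writes $-\Delta_6/(\bar b_1^2\bar\varepsilon^6)$ as a quadratic $T_0+tT_1+t^2T_2$ in $t=\bar\xi^2/\bar\varepsilon^2$ with $T_0,T_2>0$, reads off from the regime $\bar\varepsilon^4=o(\xi^2)$ that $a\bar K\geq \bar h M$ is necessary, then regards $-\Delta_6$ as a concave quadratic in $\bar a_1=1+N\bar h$ (the only place $n$ enters), checks positivity at $N=0$ under $a\bar K\geq\bar h M$, and defines $\gamma$ through $\delta=\inf_{(\xi,\eta)}x_2(\xi,\eta)$, the infimum over frequencies of the larger root of that quadratic; the low- and high-frequency asymptotics guarantee this infimum is attained and at least $1$, whence $\gamma>0$ is a genuine constant independent of $n$. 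Nothing in your outline substitutes for this step, and without it the ``if and only if'' in the $K>0$ case is not proved.
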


The proof of Theorem~\ref{th_stabDomain} is given in Appendix~\ref{app_proof th stab} by using Routh-Hurwitz criterion.\\

This theorem confirms the prominent role of the creep effect that was already observed for the well-posedness property of System~\eqref{eq_WPsyst} since Theorem~\ref{th} required $K$ to be sufficiently large. Here, it plays a crucial role in the stability of the system. Indeed, the condition $K\geq \alpha m \underline{h} \underline{c}/ \bar \rho_s$ is verified only if the creep effect on the soil is large enough. Therefore, this effect has a stabilising effect on the system, as observed in the previous section where the unstable region shrinks to $0$ as the constant $K>0$ increases. 

Conversely, the two conditions of Theorem~\ref{th_stabDomain} mean that the constants $\alpha$, $m$ and $n$ have a destabilising effect on the system. Recall that $\alpha$ measures the ratio between the erosion speed and the water speed. When $\alpha$ is large, the erosion speed is large compared to the fluid velocity which increases the instability of the bottom surface.

When $K=0$, the constant $m$ still has a destabilising effect. The other condition implies that arbitrary high frequencies are unstable.
When $n<m$ then the system is unstable on the whole domain $\RR^2$. When $n \geq m$, the system is stable in an unbounded area of the spectrum  : 
$$\left\{(\xi,\eta) \in \RR^* \times \RR \, ; \, m \leq \frac{\xi^2}{\xi^2+\eta^2}n \right\}.$$
This area is delimited by the lines of equation 
$$
\displaystyle
\eta = \pm \sqrt{\frac{n-m}{m}} \xi,
$$ 
thus increases with $n$: this confirm the observation made in Figure~\ref{stabK=0}. 
In particular, if $n \gg m$ then the system destabilises only if $\xi \ll \eta$, that is for perturbations transverse to the water flux. Moreover, as long as $n \geq m$, the system is unstable in the transverse direction ($\xi = 0$). This may lead to the formation of rills in the direction of the water flux. \\

Theorem~\ref{th_stabDomain} provides a stability criterion but does not give any insight on the nature of the instabilities. The following propositions determine more precisely the spectrum in the small wavenumber regime $\xi^2+\eta^2\ll 1$ and in the large wavenumber regime $\xi^2+\eta^2\gg 1$.
Since the system is spectrally stable if $K \geq \alpha m \underline{h} \underline{c} / \bar \rho_s$, we focus on the case 
$$
\displaystyle
K< \frac{\alpha m \underline{h} \underline{c}}{\bar \rho_s}
$$
in order to detect low frequency instabilities. The following proposition provides a more complete picture of the stability diagram when $K < \alpha m \underline{h} \underline{c} / \rho_s$ in the limit $\xi^2+\eta^2\to 0$. The expansion of the eigenvalues depends on the relative position of $\xi$ and $\eta^2$. We split the analysis into two cases: $\xi = O(\eta^2)$ and $\eta^2 = o(\xi)$.

\begin{proposition}[Instabilities at low frequencies]
     Assume $K < \alpha m \underline{h} \underline{c} / \bar \rho_s$. In the limit $|\xi|+|\eta|\to 0$, System~\eqref{eq_systlin} has the following stability properties:
     \begin{itemize}
          \item When $\xi = O(\eta^2)$ then:
          \begin{itemize}
               \item under the assumption $\alpha m \underline{h} \underline{c} / \bar \rho_s - \alpha \underline{h} < K < \alpha m \underline{h} \underline{c} / \bar \rho_s$, the system is stable if and only if $$\xi^2 < \frac{K}{\bar \rho_s \tan^2 \theta} \frac{(K + \alpha \underline{h} - \alpha m \underline{h} \underline{c})^2}{\alpha m \underline{h} \underline{c} - K} \eta^4.$$
               \item If $K \leq \alpha m \underline{h} \underline{c} / \bar \rho_s - \alpha \underline{h}$, the system is unstable for all these frequencies.
          \end{itemize}
          \item When $\eta^2 = o(\xi)$ then the system is stable if and only if $$\left( K \bar \rho_s + \alpha \underline{h} \underline{c} (n-m) \right) \xi^2 > \left( \alpha m \underline{h} \underline{c} - K \bar \rho_s \right) \eta^2.$$
          In particular, if $K < \alpha (m-n) \underline{h} \underline{c} / \rho_s$ the system is unstable at these frequencies. Moreover the most unstable eigenvalue expands as $$\lambda^1 \underset{\xi^2 + \eta^2 \to 0}{=} - \left(K+\frac{c_{sat}}{\rho_s} \alpha n\underline{h}\underline{c} - \frac{c_{sat}}{\rho_s} \alpha m\underline{h}\underline{c} \right) \xi^2 - \left(K-\frac{c_{sat}}{\rho_s}  \alpha m\underline{h}\underline{c} \right) \eta^2 + o \left(\xi^2 + \eta^2 \right).$$
     \end{itemize}
     \label{prop_lowfr2}
\end{proposition}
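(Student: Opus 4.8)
The plan is to study the three roots $\lambda^i(\xi,\eta)$ of the characteristic polynomial $P(\lambda;\xi,\eta)=\det(\lambda I-A(\xi,\eta))=\lambda^3+p_2\lambda^2+p_1\lambda+p_0$ perturbatively as $(\xi,\eta)\to0$. At the origin $A(0,0)=A_0$ has spectrum $\{0,0,-a\}$, and a direct check shows its kernel is two-dimensional, spanned by $(\underline h,m\underline c,0)^T$ and $(0,0,1)^T$, so the double eigenvalue $0$ is semisimple. The simple root near $-a$ stays in the open left half-plane for small frequencies and is irrelevant, so stability is decided by the two slow eigenvalues bifurcating from $0$. Factoring out the fast root reduces the analysis to the quadratic $a\lambda^2+p_1\lambda+p_0=0$ (since $p_2\to a$ and $\lambda^3$ is negligible at the slow scale), and everything hinges on expanding $p_1$ (the sum of the principal $2\times2$ minors of $A$) and $p_0=-\det A$ to the right order in $\xi,\eta$.

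First I would compute these coefficients from $A=A_0+i\xi A_1-(\xi^2+\eta^2)A_2$, whose entries are linear in $i\xi$ and in $\xi^2+\eta^2$. Two algebraic facts are central. First, the $O(1)$ pieces and several linear-in-$\xi$ pieces cancel in the minors and in $\det A$, reflecting the rank-one structure of $A_0$, leaving $p_1=a\bigl(i\xi\tan\theta+\beta(\xi^2+\eta^2)\bigr)+\dots$ with $\beta:=\alpha\underline h+K-\alpha m\underline h\underline c/\bar\rho_s$. Second, the genuinely second-order terms, namely products of two entries of $i\xi A_1$, carry the only $n$-dependence: a real $\xi^2$ term in $p_1$ and, crucially, an imaginary $\xi^3$ term in $p_0$, so that $p_0\approx -ia\tan\theta\,\xi\bigl[(\alpha m\underline h\underline c/\bar\rho_s-K)(\xi^2+\eta^2)-\alpha n\underline h\underline c\,\xi^2/\bar\rho_s\bigr]+aK\alpha\underline h(\xi^2+\eta^2)^2$. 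Isolating that $\xi^3$ term is the delicate point, since it looks like a higher-order remainder yet it encodes the whole $n$-dependence of the threshold.

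In the regime $\eta^2=o(\xi)$ advection dominates and the two slow roots separate into $\lambda^2\approx-i\xi\tan\theta$ and the erosion mode $\lambda^1=-p_0/p_1$, whose real part I would read off from $\mathrm{Re}(\lambda^1)=-\mathrm{Re}(p_0\overline{p_1})/|p_1|^2\approx -p_0^{I}p_1^{I}/|p_1|^2$ with $p_1^{I}\approx a\xi\tan\theta$. The $\xi^3$ term above is exactly what converts the naive coefficient $\alpha m\underline h\underline c/\bar\rho_s-K$ into $-(K+\alpha(n-m)\underline h\underline c/\bar\rho_s)$ in front of $\xi^2$, producing the stated expansion of $\lambda^1$ and the condition $(K\bar\rho_s+\alpha\underline h\underline c(n-m))\xi^2>(\alpha m\underline h\underline c-K\bar\rho_s)\eta^2$. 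In the distinguished limit $\xi=O(\eta^2)$, advection $i\xi\tan\theta$ and diffusion $\xi^2+\eta^2$ are comparable, the two modes are genuinely coupled and both $O(\eta^2)$, so I would rescale $\lambda=\eta^2\Lambda$, $\xi=\sigma\eta^2$; the reduced quadratic becomes $\Lambda^2+(\beta+i\sigma\tan\theta)\Lambda+\bigl(K\alpha\underline h-i\sigma\tan\theta(\alpha m\underline h\underline c/\bar\rho_s-K)\bigr)=0$ with $O(1)$ coefficients (the $\xi^3$ term is now negligible, which is why this regime is $n$-independent). The Routh--Hurwitz criterion for a complex quadratic — both roots in the left half-plane iff the real part $a_1$ of the linear coefficient is positive and $a_1^2a_2+a_1b_1b_2-b_2^2>0$ — gives first $\beta>0$, which is precisely the dichotomy between the sub-cases, and then, using $\beta+(\alpha m\underline h\underline c/\bar\rho_s-K)=\alpha\underline h$, the bound $\sigma^2<K\beta^2/\bigl(\tan^2\theta(\alpha m\underline h\underline c/\bar\rho_s-K)\bigr)$, i.e. the stated $\xi^2<(\cdots)\eta^4$.

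I expect the main obstacle to be the bookkeeping of this expansion rather than any single idea: the real part that decides stability appears only at second order and only after large cancellations, so one must retain exactly the right products of advective and diffusive entries and nothing more. In particular, the entire $n$-dependence of the low-frequency instability hides in one $\xi^3$ coefficient of $\det A$, and whether it survives is precisely what separates the two scaling regimes; verifying these cancellations and the surviving terms carefully is where the real work lies.
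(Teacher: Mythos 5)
Your proposal is correct and reaches the paper's conclusions, but by a genuinely different route in both regimes, so it is worth comparing. In the regime $\xi=O(\eta^2)$ the paper reduces to the same quadratic for the two slow modes (in the shifted variable $Y=\lambda+i\xi\tan\theta$) but then solves it explicitly, computing $\Re(\sqrt{\Delta})=\sqrt{(\Re\Delta+|\Delta|)/2}$ and manipulating the resulting inequalities; your rescaling $\lambda=\eta^{2}\Lambda$, $\xi=\sigma\eta^{2}$ followed by the complex Routh--Hurwitz test for a quadratic is cleaner, and your two conditions ($\beta>0$ and $\sigma^{2}<K\beta^{2}/(\tan^{2}\theta\,(\alpha m\underline{h}\,\underline{c}/\bar\rho_s-K))$, simplified via $\beta+(\alpha m\underline{h}\,\underline{c}/\bar\rho_s-K)=\alpha\underline{h}$) reproduce exactly the paper's dichotomy and threshold. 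In the regime $\eta^{2}=o(\xi)$ the paper abandons the characteristic polynomial altogether — it states that this route is ``much more complicated'' — and instead runs a Kato-type perturbation of the eigenvalue problem $AV=\lambda V$, projecting order by order with the left eigenvectors of $A_0$; you stay with the polynomial, writing $\lambda^{1}=-p_0/p_1$ and $\Re\lambda^{1}\approx-\Im p_0\,\Im p_1/|p_1|^{2}$, and you correctly identify that the entire $n$-dependence sits in the imaginary $O(\xi^{3})$ coefficient of $p_0$ (equivalently, the $-i\bar\xi N\hh\,Y^{2}$ term the paper carries in its cubic); I checked that your stated approximations of $p_0$ and $p_1$ are exactly what the exact coefficients reduce to, and that the dropped product $\Re p_0\,\Re p_1$ is indeed of higher order, so your expansion of $\lambda^{1}$ and the resulting threshold agree with the paper's. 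What your approach buys is a uniform, purely algebraic treatment with no eigenvector bookkeeping; what the paper's buys in the second regime is that the eigenvector expansion delivers \emph{both} slow eigenvalues at once. That points to the one small omission in your outline: for the ``if'' direction of the stated equivalence in the regime $\eta^{2}=o(\xi)$ you must also check that the advective slow mode satisfies $\Re\lambda^{2}=-\alpha\underline{h}(\xi^{2}+\eta^{2})+o(\xi^{2}+\eta^{2})<0$ (your $-p_0/p_1$ device only produces the root bifurcating with $Y\approx i\bar\xi$, and Vieta on the truncated quadratic is not immediately accurate enough for the real part of the other root); this is routine to fill, e.g.\ by the same Newton-polygon argument applied to the root near $-i\xi\tan\theta$, but it should be stated.
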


\begin{remark}
    This proposition means that the system is stable in all directions at low frequencies if and only if $K \geq \alpha m \underline{h} \underline{c} / \bar \rho_s$, and we recover the global criterion.
\end{remark}

The consequences of Proposition ~\ref{prop_lowfr2} are fully discussed in Section~\ref{sec_ccl_stablow}. However, we can make some preliminary comments. We first note that when $K \to \alpha m \underline{h} \underline{c} / \bar \rho_s$ then 
          $$\frac{K}{\bar \rho_s \tan^2 \theta} \frac{(K + \alpha \underline{h} - \alpha m \underline{h} \underline{c})^2}{\alpha m \underline{h} \underline{c} - K} \to +\infty \quad \text{and} \quad  \frac{\alpha m \underline{h} \underline{c} - K \bar \rho_s}{K \bar \rho_s + \alpha \underline{h} \underline{c} (n-m)} \to 0.$$
          
Consequently, the stable area (the set of frequencies $(\xi,\eta)$ such that the system is stable at these frequencies) increases with $K$, until filling all the low frequencies.
Conversely, when $K \to 0$ the stable area decreases, up to the area described in the second part of Theorem~\ref{th_stabDomain}, when $K=0$.

In the unstable regime $K<\alpha m\underline{h}\underline{c}/\bar{\rho}_s$, we can precise the instability scenario. In the  case $\xi = O(\eta^2)$, the behaviour of the function $f$ defined as
$$
\displaystyle
f(K) := K \frac{(\alpha \underline{h}+ K-\alpha m \underline{h} \underline{c}/ \bar \rho_s)^2}{\alpha m \underline{h} \underline{c}/ \bar \rho_s- K}
$$ 
is described in Figure~\ref{fig_f}. On the interval $\displaystyle\left[m\underline{h}\underline{c} / \bar \rho_s - \alpha \underline{h}, \alpha m\underline{h}\underline{c} / \bar \rho_s \right]$, the function $f$ is increasing from $0$ to $+\infty$, thus for $\xi$, $\eta$ fixed there exists a unique $\bar K(\xi,\eta)$ such that $$\xi^2 = \frac{1}{\bar \rho_s \tan^2 \theta} f(\bar K(\xi,\eta)) \eta^4.$$ If $K \leq \bar K(\xi,\eta)$ then the system is unstable at this frequency $(\xi,\eta)$, and if $K > \bar K(\xi,\eta)$ then the system is stable at this frequency.
  
\begin{figure}[ht]
	\center
	\begin{subfigure}{0.32 \textwidth}
		\center
		\begin{tikzpicture}
			\tkzTabInit[lgt = 1.2, espcl=3]{$K$ / 1, $f(K)$ / 2}{$0$, $\alpha \frac{m\underline{h}\underline{c}}{\bar \rho_s}$}
			\tkzTabVar{-/ $0$, +/ $+\infty$}
		\end{tikzpicture}
		\caption{$m\underline{c} \leq \rho_s$}
		\label{mc<rho_s}
	\end{subfigure} \quad
	\begin{subfigure}{0.57 \textwidth}
		\center
		\begin{tikzpicture}
			\tkzTabInit[lgt = 1.2, espcl=2.5]{$K$ / 1, $f(K)$ / 2}{$0$, $K_0$, $\alpha \frac{m\underline{h}\underline{c}}{\bar \rho_s} - \alpha \underline{h}$, $\alpha \frac{m\underline{h}\underline{c}}{\bar \rho_s}$}
			\tkzTabVar{-/ $0$, +/ $f(K_0)$, -/ $0$, +/ $+\infty$}
		\end{tikzpicture}
		\caption{$m\underline{c} > \rho_s$}
		\label{mc>rho_s}
	\end{subfigure}
	\caption{Variations of the function $f$}
	\label{fig_f}
\end{figure}
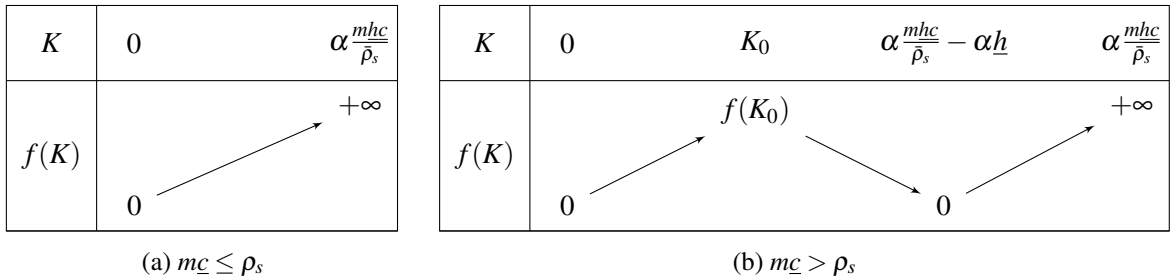

In order to prove Proposition ~\ref{prop_lowfr2}, we compute an asymptotic expansion of the eigenvalues $\lambda^i(\xi, \eta), \, i=1,2$ of the matrix $A(\xi,\eta)$, as $|\xi|+|\eta|$ goes to $0$. Then, we study the sign of their real parts. Note that $A(0,0)=A_0$, which corresponds to homogeneous in space perturbations, admits $\lambda^1 = \lambda^2 = 0$ and $\lambda^3 = -a$ as eigenvalues. We focus on the expansion of the eigenvalues $\lambda^1,\lambda^2$ bifurcating from $0$ as the third one bifurcates from $\lambda^3(0,0)=-a<0$ and its real part remains negative for $|\xi|+|\eta|$ small enough. The matrix $A(0,0)$ is diagonalizable and the eigenvalue $0$ is semi-simple (its algebraic multiplicity, $2$, is equal to its geometric multiplicity). Therefore the eigenvalues $\lambda^i(\xi,\eta)$ admit a Taylor expansion with respect to the perturbation parameters $\xi$ and $\eta^2$ (see \cite{andrew1993derivatives}). These expansions depends heavily on the ratio $\xi/\eta^2$: we split the analysis between the case $|\xi|/\eta^2$ bounded by a constant ,which is studied in Section~\ref{stab low1}, and the case $|\xi|^2/\eta^2 \to +\infty$, which is studied in Section~\ref{second case}.
The details of the proof of Proposition~\ref{prop_lowfr2} can be found in Appendix~\ref{app_stab_low}. \\

The following proposition determine the stability of System~\eqref{eq_systlin} in the  limit $\xi^2 + \eta^2 \to +\infty$. 

\begin{proposition}[Stability analysis at high frequencies]
The stability results for the system at high frequencies are divided in three cases:
\begin{itemize}
    \item When $K > 0$ then System~\eqref{eq_systlin} is stable at high frequencies. The eigenvalues expand as
     \begin{align*}
          \left\{ \begin{array}{l}
          \displaystyle
          \lambda_1(\xi,\eta)=-K(\xi^2+\eta^2)+o(\xi^2+\eta^2)\\
          \displaystyle
          \lambda_2(\xi,\eta)=-\alpha\underline{h}(\xi^2+\eta^2)+o(\xi^2+\eta^2)\\
          \displaystyle
          \lambda_3(\xi,\eta)=-i\xi\tan(\theta)-\frac{s\bar{\rho}_s}{e\underline{h}}+o(1).
          \end{array}\right.
     \end{align*}
    \item If $K=0$ and $\xi \neq 0$, the Taylor expansion of the eigenvalues when $\xi^2 + \eta^2 \to + \infty$ is given by:
     \begin{align*}
          \left\{ \begin{array}{l}
          \displaystyle
          \lambda_1(\xi,\eta) = \frac{s \underline{c}}{e \underline{h}} \left(m - \frac{\xi^2}{\epsilon^2}n \right) + o(1), \vspace{0.2cm}\\
          \displaystyle
          \lambda_2(\xi,\eta) = -\alpha \underline{h}(\xi^2+\eta^2) + o(\xi^2 + \eta^2), \vspace{0.3cm}\\
          \displaystyle
          \lambda_3(\xi,\eta) =  -i\xi \tan \theta - \frac{s \bar \rho_s}{e \underline{h}} + o(1).
          \end{array} \right.
     \end{align*}
     \item If $K=0$ and $\xi = 0$, the Taylor development of the eigenvalues when $\eta \to + \infty$ is:
     \begin{align*}
          \left\{ \begin{array}{l}
          \displaystyle
          \lambda_1(0,\eta) = 0, \vspace{0.3cm}\\
          \displaystyle
          \lambda_2(0,\eta) = -\alpha \underline{h} \eta^2 + o(\eta^2), \vspace{0.1cm}\\
          \displaystyle
          \lambda_3(0,\eta) =  \frac{s \underline{c}}{e \underline{h}} \left( m- \frac{\bar \rho_s}{\underline{c}} \right) + o(1).
          \end{array} \right.
     \end{align*}
\end{itemize}
\label{prop_stab_high}
\end{proposition}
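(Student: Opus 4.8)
The plan is to treat $A(\xi,\eta)=A_0+i\xi A_1-(\xi^2+\eta^2)A_2$ as a perturbation of its dominant part $-(\xi^2+\eta^2)A_2$ in the regime $\xi^2+\eta^2\to+\infty$. The matrix $A_2$ is (block) triangular and its eigenvalues are $\alpha\underline{h}$, $K$ and $0$, the zero eigenvalue being carried by the concentration direction $e_2=(0,1,0)^T$ because the middle row of $A_2$ vanishes (the $c$-equation has no diffusion). Consequently two eigenvalues of $A(\xi,\eta)$ must diverge like $-(\xi^2+\eta^2)$ times a nonzero eigenvalue of $A_2$, while the eigenvalue(s) attached to $\ker A_2$ stay bounded. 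I would organise the whole proof around this fast/slow splitting and carry out the bookkeeping on the characteristic polynomial, which (as in the proof of Theorem~\ref{th_stabDomain}) I would write in the shifted form $P(Y)$ with $Y=\lambda+i\xi\tan\theta$, its coefficients being polynomials in $\bar\epsilon^2=\alpha\underline{h}(\xi^2+\eta^2)$, $\bar\xi=\xi\tan\theta$ and $\bar K=K/(\alpha\underline{h})$; the transport contribution $-i\xi\tan\theta$ is precisely the shift undone at the end.

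For the two diverging eigenvalues, when $K>0$ the top-degree part of $P$ factors at leading order as $(Y+\bar\epsilon^2)(Y+\bar K\bar\epsilon^2)(Y+a)$, so the two large roots are $Y\sim-\bar\epsilon^2$ and $Y\sim-\bar K\bar\epsilon^2$; undoing the shift and recalling $\bar\epsilon^2=\alpha\underline{h}(\xi^2+\eta^2)$ and $\bar K\bar\epsilon^2=K(\xi^2+\eta^2)$ gives $\lambda_2=-\alpha\underline{h}(\xi^2+\eta^2)+o(\xi^2+\eta^2)$ and $\lambda_1=-K(\xi^2+\eta^2)+o(\xi^2+\eta^2)$. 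Equivalently these are the analytic perturbations of the two simple nonzero eigenvalues of $A_2$ with $1/(\xi^2+\eta^2)$ as small parameter. When $K=0$ only $\alpha\underline{h}$ survives as a nonzero eigenvalue of $A_2$, so a single eigenvalue diverges, $\lambda_2=-\alpha\underline{h}(\xi^2+\eta^2)+o(\xi^2+\eta^2)$.

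For the bounded eigenvalue(s) I would use Vieta's relations on $P$. When $K>0$ the product of the three roots is $-p_0$ with $p_0=a\bar\epsilon^2(\bar\epsilon^2\bar K-i\bar\xi)$; dividing $-p_0$ by the product $\bar K\bar\epsilon^4$ of the two large roots gives the bounded root $Y\to-a$, hence $\lambda_3=-i\xi\tan\theta-a+o(1)=-i\xi\tan\theta-\frac{s\bar\rho_s}{e\underline{h}}+o(1)$ (using $a=s\bar\rho_s/(e\underline{h})$). When $K=0$ the kernel of $A_2$ is two dimensional, spanned by $(1,0,-1)^T$ and $(0,1,0)^T$, and two roots stay bounded; I would extract the reduced quadratic $Y^2-SY+\Pi$ they satisfy from the sum and product of all three roots, using that the large root is $Y_0=-\bar\epsilon^2+i\bar\xi\,N\bar h+O(1)$ with $N\bar h=\alpha a n\underline{c}\,\underline{h}/(\bar\rho_s\tan^2\theta)$. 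The crucial point is that the imaginary part of $Y_0$ feeds back an $O(1)$ real correction $-\bar\xi^2 N\bar h/\bar\epsilon^2=-\frac{\xi^2}{\xi^2+\eta^2}\frac{s n\underline{c}}{e\underline{h}}$ into $S=Y_1+Y_2$; undoing the shift then yields $\lambda_1=\frac{s\underline{c}}{e\underline{h}}\!\left(m-\frac{\xi^2}{\xi^2+\eta^2}n\right)+o(1)$ and $\lambda_3=-i\xi\tan\theta-\frac{s\bar\rho_s}{e\underline{h}}+o(1)$. Specialising to $\xi=0$ (so $\bar\xi=0$) removes both the correction and the transport: the reduced quadratic becomes $Y^2-(\bar h M-a)Y=0$ with $\bar h M=am\underline{c}/\bar\rho_s$, whose roots $0$ and $\frac{s\underline{c}}{e\underline{h}}\bigl(m-\frac{\bar\rho_s}{\underline{c}}\bigr)$ are exactly $\lambda_1(0,\eta)$ and $\lambda_3(0,\eta)$, while $\lambda_2=-\alpha\underline{h}\eta^2$.

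Finally I would read off the signs of the real parts: for $K>0$ all three tend either to $-\infty$ or to $-s\bar\rho_s/(e\underline{h})<0$, proving stability at high frequencies; for $K=0$ the diverging mode is stable but $\operatorname{Re}\lambda_1$ may be positive, consistent with Theorem~\ref{th_stabDomain}. The main obstacle is the degenerate case $K=0$, where the zero eigenvalue of $A_2$ is double: first-order reduction on $\ker A_2$ only produces the $O(1)$ matrix
\[
\begin{pmatrix} \dfrac{am\underline{c}}{\bar\rho_s} & -\dfrac{a\underline{h}}{\bar\rho_s}\\[6pt] \dfrac{am\underline{c}}{\underline{h}} & -a-i\bar\xi\end{pmatrix}
\]
and misses the direction-dependent term in $\xi^2/(\xi^2+\eta^2)$. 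Capturing it requires pushing the expansion to second order through the fast subspace, which is delicate precisely because $i\xi A_1$ is \emph{not} a bounded perturbation of $-(\xi^2+\eta^2)A_2$ when $\xi\to\infty$; hence $\xi$ and $\xi^2+\eta^2$ must be tracked as two independent large parameters and the remainder estimates made uniform in the direction of $(\xi,\eta)$.
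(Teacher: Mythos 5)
Your proposal is correct and arrives at all three expansions, but it takes a genuinely different route from the paper's Appendix~C. The paper rewrites the eigenvalue problem as $\bigl(A_2-\frac{i\xi A_1+A_0}{\xi^2+\eta^2}\bigr)V=-\frac{\lambda}{\xi^2+\eta^2}V$ and runs order-by-order perturbation theory on the eigenpair, projecting with the left eigenvectors of $A_2$ and pushing to second order to capture the $O(1)$ terms, with an explicit case split between $\xi\to+\infty$ and $\xi$ bounded (because $i\xi A_1/(\xi^2+\eta^2)$ changes order between those regimes); for $K=0$, $\xi=0$ it factorises the characteristic polynomial exactly, as you do. You instead work entirely on the shifted characteristic polynomial $P(Y)$, identifying the diverging roots from the leading factorisation and recovering the bounded ones by Vieta's relations. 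Both routes face the same two difficulties, which you name correctly: $\xi$ and $\xi^2+\eta^2$ must be tracked as independent large parameters, and for $K=0$ the zero eigenvalue of $A_2$ is double, so the term $-\frac{\xi^2}{\xi^2+\eta^2}\,n$ in $\lambda_1$ is invisible to a first-order reduction on $\ker A_2$ --- your reduced $2\times 2$ matrix is exactly what the paper's first-order step produces. Your mechanism for recovering that term, namely the $O(1)$ real feedback $-\bar\xi^2 N\bar h/\bar\epsilon^2$ from the imaginary part $i\bar\xi N\bar h$ of the diverging root into the sum of the two bounded roots, is correct and arguably more transparent than the paper's second-order eigenvector computation; what the paper's method buys in exchange is the eigenvectors themselves and a template uniform with the low-frequency analysis of Appendix~B. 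One point to make explicit when you write this up: extracting both bounded roots to $o(1)$ accuracy from the reduced quadratic needs the product $Y_1Y_2$ only up to $o(\bar\xi)$ when $\bar\xi\to\infty$ (the roots are separated by $\sim i\bar\xi$), but up to $o(1)$ when $\xi$ stays bounded, so the case split you mention at the end is genuinely required and not merely a remark.
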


The proof of this proposition is postponed in Appendix~\ref{app_stab_high}.\\

As a consequence of Proposition \ref{prop_stab_high}, one finds that when $K >0$, the unstable domain in the spectrum is bounded, as observed in Section~\ref{sec_ccl}. Thus, there exists a wave vector associated to an unstable eigenvalue with a maximum real part which may provide a description of the pattern geometry. When $K=0$, the unstable region is unbounded but there is also a most unstable eigenvalue with its real part bounded by $\frac{s \underline{c}}{e \underline{h}}m$: this is obtained when $\xi=0$ for pure transverse perturbations.

\subsection{Discussion on the low frequencies stability analysis: form of the spectrum} \label{sec_ccl_stablow}


The stability result on the whole domain Theorem~\ref{th_stabDomain} does not give an explicit formula for the localisation of the stable and unstable areas when $K>0$. Therefore in this section we interpret the stability analysis at low frequencies Proposition ~\ref{prop_lowfr2} to study the form of the limit between stable and unstable areas in the spectrum, at these frequencies, for $K>0$. As the real part of the spectrum remains unchanged under the transformations $\xi\to-\xi$ and $\eta\mapsto-\xi$, we examine the behaviour of the system in the top right quarter of the plane. When $K \geq \alpha m \underline{h} \underline{c} / \bar \rho_s$, the system is stable at all low frequencies, thus we focus on the case $K < \alpha m \underline{h} \underline{c} / \bar \rho_s$.\\ 

We first examine the case of the frequencies which verify $\eta^2 = o(\xi)$. If $K \bar \rho_s \leq \alpha (m-n) \underline{h} \underline{c}$, the system is unstable at these frequencies. When $\alpha (m-n) \underline{h} \underline{c} / \bar \rho_s \leq K \leq \alpha m \underline{h} \underline{c} / \bar \rho_s$, the instability comes from the term in $\eta$ of the first eigenvalue, and:
\begin{align*}
     \displaystyle
     \lambda^1 & \geq 0
     \quad \Longleftrightarrow \quad \frac{\eta^2}{\xi^2} \geq \frac{K \bar \rho_s - \alpha (m-n) \underline{h} \underline{c}}{\alpha m \underline{h} \underline{c} - \bar K \rho_s} + o \left(\xi^2 + \eta^2 \right).
\end{align*}

\begin{figure}[h!]
	\centering
     \begin{subfigure}[b]{0.3\textwidth}
          \centering
          \includegraphics[width=\textwidth]{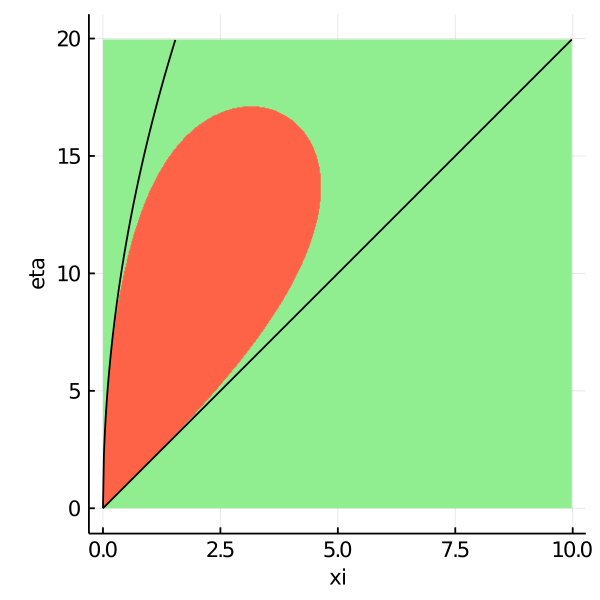}
          \caption{$K = 0.6 \alpha m \underline{h} \underline{c}/\rho_s$}
          \label{stablow21}
     \end{subfigure}
     \begin{subfigure}[b]{0.3\textwidth}
          \centering
          \includegraphics[width=\textwidth]{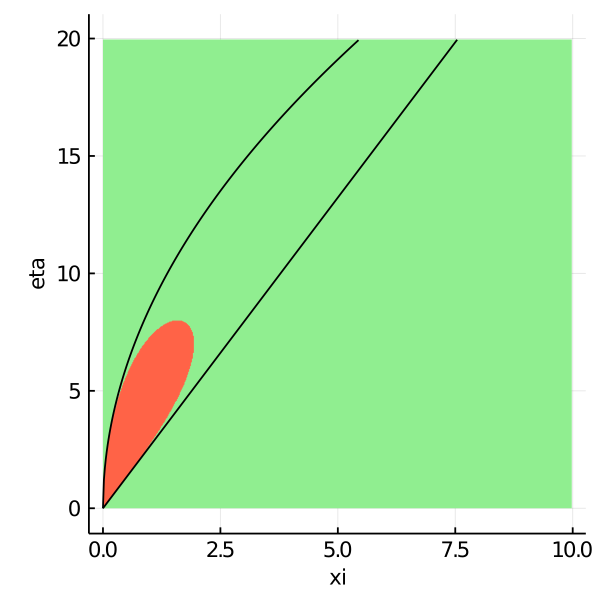}
          \caption{$K = 0.75 \alpha m \underline{h} \underline{c}/\rho_s$}
          \label{stablow22}
     \end{subfigure}
     \begin{subfigure}[b]{0.3\textwidth}
          \centering
          \includegraphics[width=\textwidth]{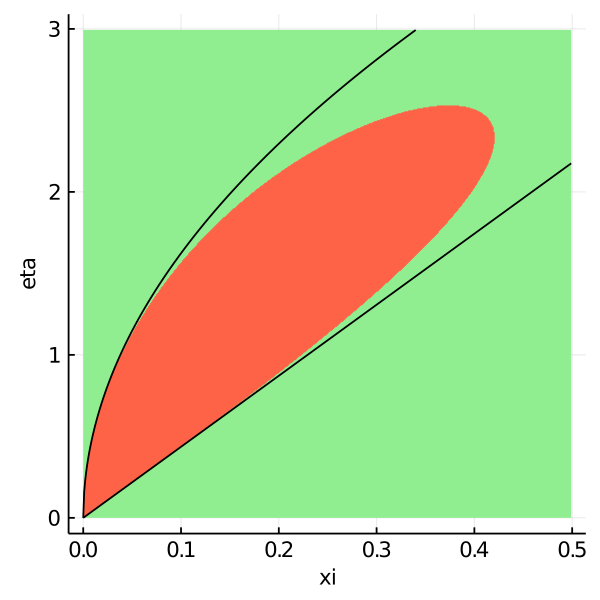}
          \caption{$K = 0.9 \alpha m \underline{h} \underline{c}/\rho_s$, lower scale}
          \label{stablow23}
     \end{subfigure}
     \caption{Stability of the system at low frequencies in the plane $(\xi,\eta)$, when $K > \alpha m \underline{h} \underline{c} / \bar \rho_s - \alpha \underline{h}$. The system is stable in the green area and unstable in the red area. The black curves are the line of slope~\eqref{eq_slope} and the curve of Equation~\eqref{eq_curve}}.
     \label{fig_stablow1}
\end{figure}

Consequently the boundary between the stable and unstable areas of the system is close to a straight line, of slope 
\begin{align}
\frac{\eta}{\xi}=\sqrt{\frac{K \bar \rho_s - \alpha (m-n) \underline{h} \underline{c}}{\alpha m \underline{h} \underline{c} -K \bar \rho_s}}.
\label{eq_slope}
\end{align}
The system is stable below this line, and unstable above. The other case corresponds to the frequency domain $\xi = O(\eta^2)$. If $K \leq \alpha m \underline{h} \underline{c} / \bar \rho_s - \alpha \underline{h}$ the system is unstable. When $\alpha (m-n) \underline{h} \underline{c} / \bar \rho_s \leq K \leq \alpha m \underline{h} \underline{c} / \bar \rho_s$ the destabilizing effect is given by the first eigenvalue, and
\begin{align*}
     \displaystyle
     \lambda^1 & \geq o \left(\xi^2 + \eta^2 \right)
     \Longleftrightarrow \frac{\xi^2 \tan^2 \theta}{\eta^4} \leq \frac{K}{\bar \rho_s}  \frac{\left( K \bar \rho_s + \alpha \underline{h} \bar \rho_s - \alpha m \underline{h} \underline{c} \right)^2}{\alpha m \underline{h} \underline{c} -K \bar \rho_s } + o \left(\xi^2 + \eta^2 \right).
\end{align*}

Therefore the boundary between stable and unstable area in this case is close to the curve of equation 
\begin{align}
     \eta = \left( \frac{\bar \rho_s}{K}  \frac{\alpha m \underline{h} \underline{c} -K \bar \rho_s }{\left( K \bar \rho_s + \alpha \underline{h} \rho_s - \alpha m \underline{h} \underline{c} \right)^2}\right)^{1/4} \sqrt{\tan \theta \, \xi}.
     \label{eq_curve}
\end{align}

Consequently, when $$\min (\alpha m \underline{h} \underline{c} / \bar \rho_s - \alpha \underline{h}, \alpha (m-n) \underline{h} \underline{c} / \bar \rho_s) \leq K \leq \alpha m \underline{h} \underline{c} / \bar \rho_s,$$ the system is stable below the line of slope given by~\eqref{eq_slope} and above the curve of Equation~\eqref{eq_curve}, and unstable between these curves. This situation is illustrated in Figure~\ref{fig_stablow1}, for various values of $K$. The black curves are the boundary curves between stable and unstable areas calculated above, we can see that they fit the calculations. The unstable area are  bounded, and this is confirmed by the stability analysis at high frequencies below. Moreover, as $K$ increases, the unstable area decreases.

\begin{figure}[h!]
	\centering
     \begin{subfigure}[b]{0.3\textwidth}
          \centering
          \includegraphics[width=\textwidth]{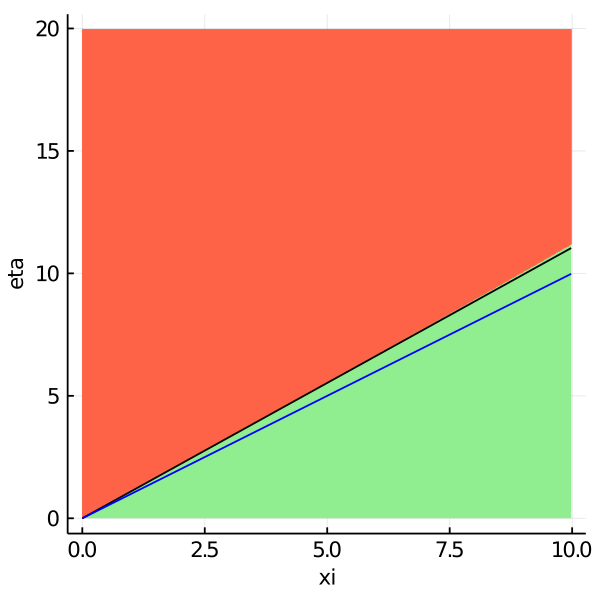}
          \caption{$K = 0.1 \alpha m \underline{h} \underline{c}/\rho_s$}
          \label{stablow11}
     \end{subfigure}
     \begin{subfigure}[b]{0.3\textwidth}
          \centering
          \includegraphics[width=\textwidth]{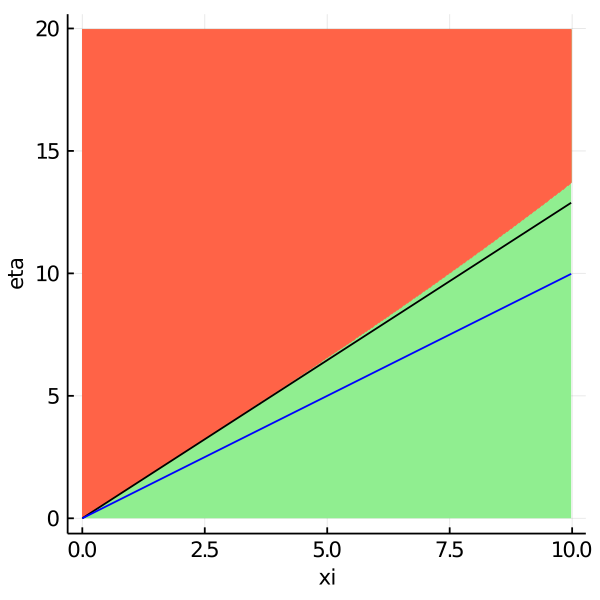}
          \caption{$K = 0.25 \alpha m \underline{h} \underline{c}/\rho_s$}
          \label{stablow12}
     \end{subfigure}
     \begin{subfigure}[b]{0.3\textwidth}
          \centering
          \includegraphics[width=\textwidth]{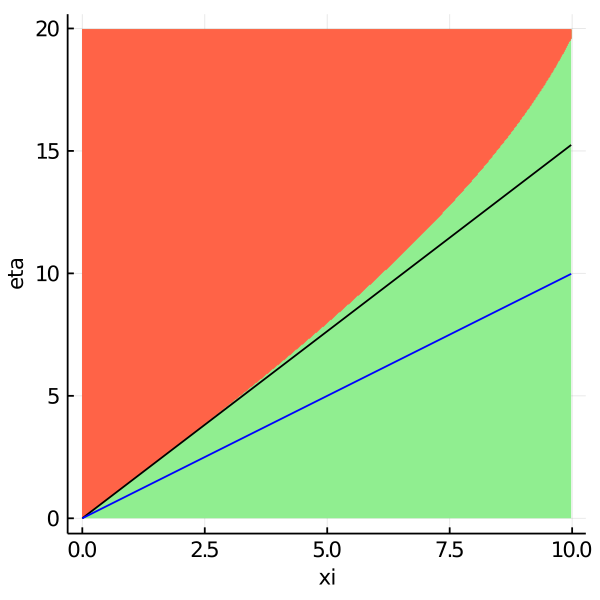}
          \caption{$K = 0.4 \alpha m \underline{h} \underline{c}/\rho_s$}
          \label{stablow13}
     \end{subfigure}
     \caption{Stability of the system at low frequencies in the plane $(\xi,\eta)$, when $K < \alpha m \underline{h} \underline{c} / \bar \rho_s - \alpha \underline{h}$. The system is stable in the green area and unstable in the red area. The black line is the line of slope~\eqref{eq_slope}, the blue line is the line of slope $\sqrt{\frac{n-m}{m}}$ (the boundary line when $K=0$).}
     \label{fig_stablow2}
\end{figure}

If $K$ is smaller than $\alpha m \underline{h} \underline{c} / \bar \rho_s - \alpha \underline{h}$ or $\alpha (m-n) \underline{h} \underline{c} / \bar \rho_s$ then one of the curves (or both) of equation~\eqref{eq_slope} and~\eqref{eq_curve} disappear, and the instability area increases. Figure~\ref{fig_stablow2} illustrates the case $\alpha (m-n) \underline{h} \underline{c} / \bar \rho_s < K < \alpha m \underline{h} \underline{c} / \bar \rho_s - \alpha \underline{h}$, for various values of $K$. The only boundary between the stable and unstable areas is the line of slope~\eqref{eq_slope}. The system is stable below this line and unstable above this line. As $K$ decreases the unstable area increases, and comes closer to the line of slope $\sqrt{\frac{n-m}{m}}$ which is the boundary when $K=0$, given in Theorem~\ref{th_stabDomain}.

\section{Direct numerical simulations} \label{sec_simu}

In this section, we present some numerical experiments of the erosion of a tilted plane. The parameters of these experiments come from physical data. The quantities $L_x$, $L_y$, $V$, $\underline{h}$, $\rho_s$, $c_{sat}$, $e$ and $\theta$ are chosen according to the laboratory experiment \cite{guerin_streamwise_2020}, which erodes a block of salt. The choice of values for the exponents $m$ and $n$ has been investigated many times in the literature. The values are chosen between $0$ and $3$, with an additional relation $n = 2m$, as  explained in \cite{chen_landscape_2014} (here the constant $n$ corresponds to the constant $m+n$ in the literature). We choose $n = 2m$, with $n$ sufficiently large in order to observe the formation of channels in the simulations. Indeed, we found that the effect of digging in depressions is reinforced when these exponents are larger.
The choice of parameters is given in Table~\ref{tab_param}.\\

\begin{table}[ht]
\center 
\begin{tabular}{| l| l | l |}
     \hline
     Length of the domain & $L_x$ & $40$ $cm$\\
     Width of the domain & $L_y$ & $10$ $cm$\\
     Characteristic water speed & $ V = \mu \tan \theta$ & $1$ $m/s$\\	
     Initial water height & $h_0$ & $0.5$ $mm$\\
     Initial sediment concentration & $c_0$ & $317$ $g/m^3$\\
     Exponent of friction over $h$ & $m$ & $1,6$\\
     Exponent of friction over $v$ & $n$ & $3,2$\\
    Density of the sediments & $\rho_s$ & $2.17 \times 10^6$ $g/m^3$ \\
    Concentration of saturation & $c_{sat}$ & $3.17 \times 10^5$ $g/m^3$ \\
     Erosion speed & $e$ & $0.5$ mm/hour\\
     Sedimentation speed & $s$ & $e/2000$ \\
     Angle of the plane & $\theta$ & $39^\circ$\\
     \hline  
\end{tabular}
\caption{Parameters of numerical simulations}
\label{tab_param}
\end{table}

Once the ratio between the erosion speed and the water speed has been fixed, we assume that the time variations of the water height and concentration are small, consequently we neglect these variations in the simulation. Indeed, in the simulation, the eroded height of the soil is of the order of a millimeter, thus the characteristic time $T = Z/e$ is of the order of an hour. The water crosses the domain in $0.4$ seconds, thus there are four orders of magnitude between the characteristic time of the water flow and that of erosion. A direct numerical simulation of the full system, with time derivatives, would impose a severe CFL restriction: indeed, the fluid velocity is about $1m.s^{-1}$ whereas the erosion rate is around $1mm.h^{-1}$. Since we are interested in the erosion process, the natural time scale is one hour and a typical time step would be a minute. However, the numerical time step $\delta t$ is driven by a CFL: $\delta t\leq \delta x/v_{water}\approx 0.001 s$ if one consider a  typical mesh size $\delta x=1mm$ (for a channel of length $400mm$). This increases the numerical cost of the scheme. Instead, we solve stationary problems at each time steps, where the time step is determined by the erosion time scale. A comparison between the two resolution methods is made in Appendix \ref{appendix_scheme}, it shows that taking the stationary equation does not affect the results.

The stationary equations for the water height and concentration of sediments in water are discretised with a finite volume method. The scheme is given in Appendix~\ref{appendix_scheme}. The equation~\eqref{eq_systcomplet_c} on $c$ is a linear equation, and we discretise it with an explicit Euler scheme by considering it as an evolution equation with respect to the variable $x$, as the speed in this direction does not vanish:
\begin{align}
     \dx c + \frac{v_y}{v_x} \dy c = \frac{\rho_s}{h} (E-S).
     \label{eq_c_lin}
\end{align}
The equation~\eqref{eq_systcomplet_h} on $h$ is non linear, thus it is harder to discretise it. In order to avoid an implicit discretisation of this equation, we made the choice to linearise the equation. Denoting by $h^n$ the solution of the equation at time $t^n$, we approximate: 
$$\dive(h\nabla h)(t_n) + \dive (h\nabla z)(t_n) \approx \dive(h^{n-1} \nabla h^n) + \dive (h^n \nabla z^n).$$

As the solution at the previous time step is known, the right hand term is linear in $h^n$. Thus we can discretise it with a finite volume scheme in two dimensions. The justification for the quasi-stationary model and the numerical scheme are given in Appendix~\ref{appendix_scheme}.\\

We have chosen periodic boundary conditions in the transverse direction, therefore in the stability analysis there are only countable frequencies in this direction. The frequencies are the $\eta_n = 2 \pi n / L_y$ where $L_y$ is the width of the domain.  The boundary conditions at the top of the domain are Dirichlet condition for $h$, $c$ and $z$. At the bottom of the domain, z is prescribed by a Dirichlet condition and we suppose that water flows freely. Thus we fix Neumann condition for $h$ and $c$.\\

In the simulations, the initial surface is a flat tilted plane with a small random perturbation. This surface is represented in Figure~\ref{sol_init}, and it shows a flat view of the two dimensional plane. The color scale show the height difference between the actual soil, and the flat plane. Thus the yellow areas are the less dug parts.\\ 

\begin{figure}[h!]
     \centering
     \includegraphics[width=0.49\textwidth]{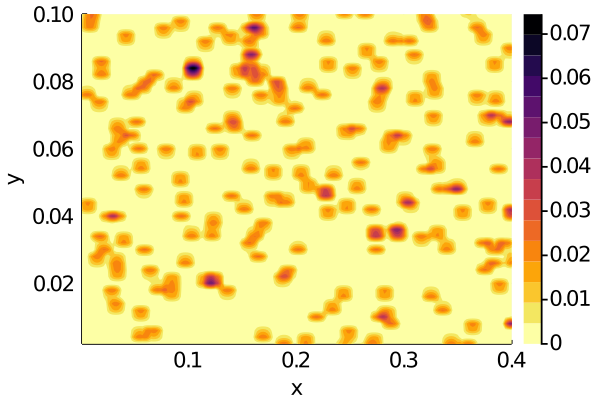}
     \caption{Perturbation of the initial surface, on the domain $[0,L_x] \times [0,L_y]$. The represented quantity is the difference between the flat surface and the perturbed soil, in millimeters. This quantity is zero at yellow points, and increase when color is darker.}
     \label{sol_init}
\end{figure}

\subsection{Simulations without source term for water} 

In this part, we present some results of simulations when $r=0$, as in the spectral stability analysis. These simulations are compared to the theoretical results of stability.

\begin{figure}[h!]
	\centering
     \begin{subfigure}[b]{0.49\textwidth}
          \centering
          \includegraphics[width=\textwidth]{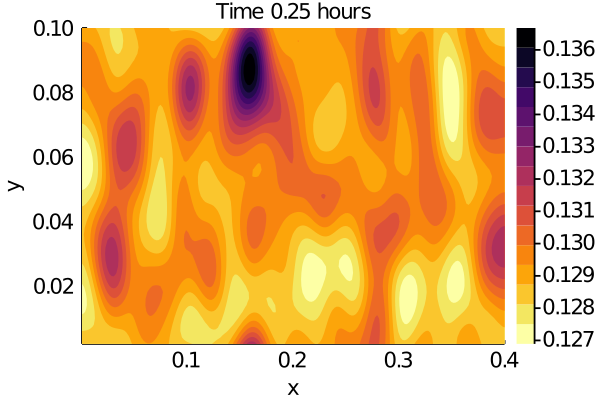}
          \caption{$K=K_e$, $T=0.25$}
          \label{e1}
     \end{subfigure}
     \begin{subfigure}[b]{0.49\textwidth}
          \centering
          \includegraphics[width=\textwidth]{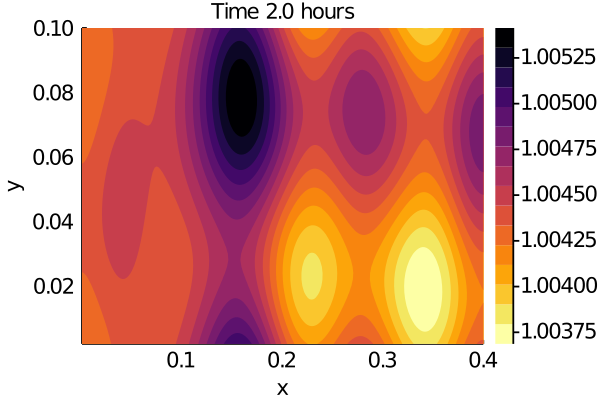}
          \caption{$K=K_e$, $T=2$}
          \label{e2}
     \end{subfigure}
     \caption{Eroded height of the soil in $mm$. The darkest areas are the most eroded areas.}
     \label{simu_stable}
\end{figure}

First, in Figure~\ref{simu_stable} we presents results of the simulation of the system, when 
$$
\displaystyle
K=K_e=\frac{5\times 10^{-4}}{3600}m^2s^{-1}.
$$ 
In this case, the system is spectrally stable at all frequencies $\{ (\xi,\eta_n); \, \xi \in \RR, \, n \in \NN \}$. The pictures represent the eroded height $z(T)-z_0$ at time $T=0.25$ hour and $T=2$ hours. We observe that surface perturbations are quickly smoothed, and tends to disappear. After $15$ minutes, we see in Figure~\ref{e1} that the amplitude of perturbation has not decreased yet, but these perturbations are smoother than initially. After $2$ hours, we see in Figure~\ref{e2} that the amplitude of the perturbations has decreased. \\

\begin{figure}[h!]
	\centering
     \begin{subfigure}[b]{0.49\textwidth}
          \centering
          \includegraphics[width=\textwidth]{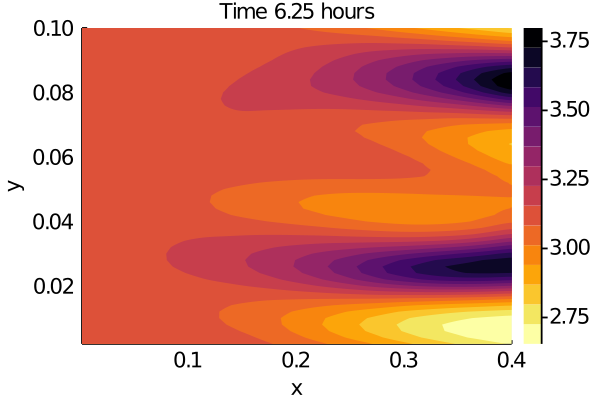}
          \caption{$\displaystyle K = \frac{K_e}{20}$}
          \label{e/20}
     \end{subfigure}
     \begin{subfigure}[b]{0.49\textwidth}
          \centering
          \includegraphics[width=\textwidth]{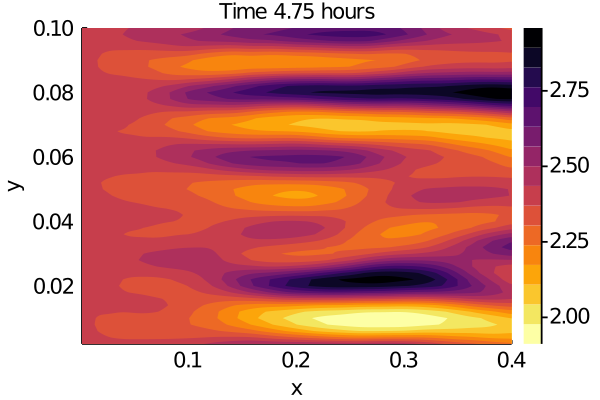}
          \caption{$\displaystyle K = \frac{K_e}{50}$}
          \label{e/50}
     \end{subfigure}
     \caption{Eroded height of the soil in $mm$ with respect to the flat surface. The darkest areas are the most eroded areas. The width of the channel are larger for larger values of $K$}
     \label{simu}
\end{figure}

\begin{figure}[h!]
	\centering
     \begin{subfigure}[b]{0.49\textwidth}
          \centering
          \includegraphics[width=\textwidth]{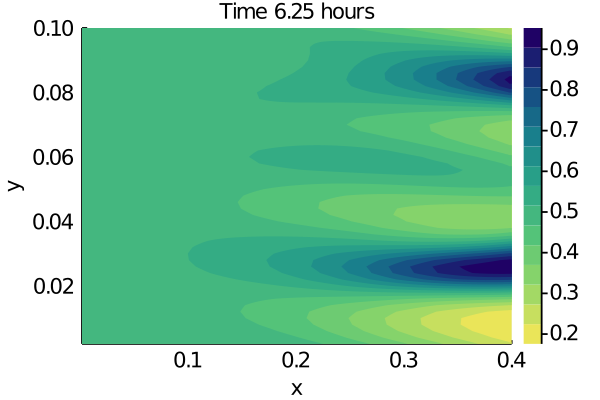}
          \caption{$\displaystyle K = \frac{K_e}{20}$}
          \label{eau_e/20}
     \end{subfigure}
     \begin{subfigure}[b]{0.49\textwidth}
          \centering
          \includegraphics[width=\textwidth]{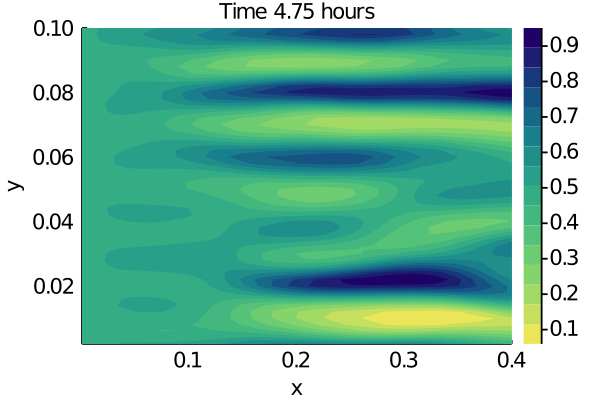}
          \caption{$\displaystyle K = \frac{K_e}{50}$}
          \label{eau_e/50}
     \end{subfigure}
     \caption{Water height on the domain, in millimeters. The water depth is greater in darker areas.}
     \label{simu_water}
\end{figure}

In Figures~\ref{simu} and \ref{simu_water}, we represent respectively the eroded surface and the fluid height when the system is unstable at some frequencies. 
In Figure~\ref{simu}, we can observe the formation of channels in the soil, in the flow direction. The width of these channels are larger when $K$ is higher, and they take more time to appear. This can be explained by the stability analysis, as discussed below. 
Figure~\ref{simu_water} have some similarities with Figure~\ref{simu} since the water tends to fill the eroded channels ; the water depth is larger in the channels, and smaller between them. \\

\begin{figure}[h!]
	\centering
    \includegraphics[width=0.7\textwidth]{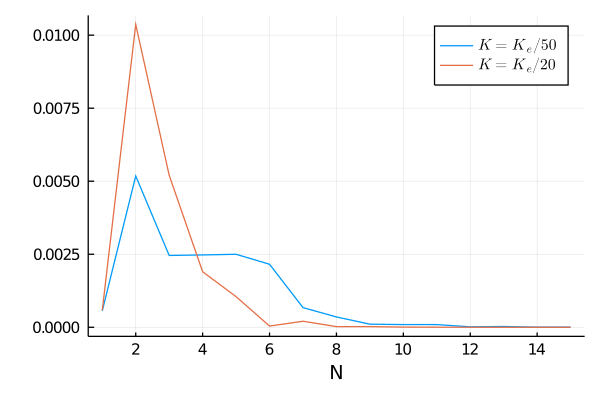}
     \caption{Discrete frequency analysis of the result of the simulation, when $K = K_e/20$ (Figure~\ref{e/20}), and $K=K_e/50$ (Figure~\ref{e/50}). The quantity represented is the norm of the discrete Fourier transform of the signal in the transverse direction.}
     \label{trFourier}
\end{figure}

We quantify the number of channels generated by the simulations in order to validate the stability analysis. For that purpose, we compute the discrete Fourier transform of the eroded surface at the end of the simulation. As the main direction of perturbations is transverse to the slope, we have calculated this Fourier transform in this direction, at fixed $x$.
In Figure~\ref{trFourier}, we represent the norm of the discrete Fourier transform (DFT) of the result of the simulations \ref{e/20},~\ref{e/50} in the transverse direction, calculated at the bottom of the plane (for $x = L_x$). The frequencies are the $2 \pi N / L_y$, for $N \in \{0, \dots, Ny/2 \}$. When $K = K_e/20$, we can see in Figure~\ref{trFourier} that the dominant frequency is reached when $N = 2$ at the bottom of the tilted plane. When $K=K_e/50$, we have contribution between $N=2$ and $N=5$ frequency

\begin{figure}[h!]
	\centering
     \begin{subfigure}[b]{0.45\textwidth}
          \centering
          \includegraphics[width=\textwidth]{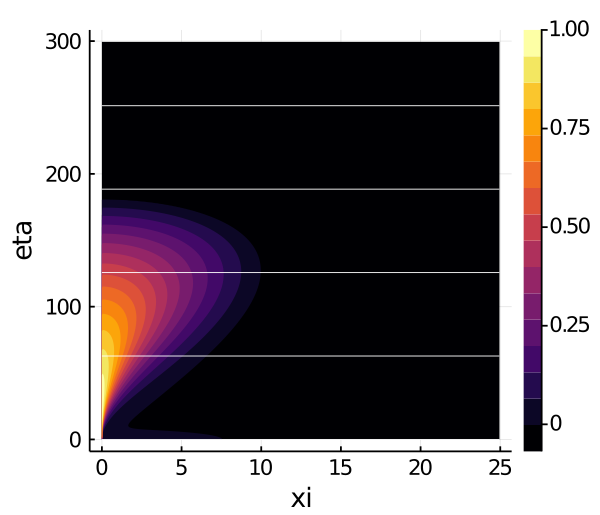}
          \caption{$K = \frac{K_e}{20}$}
          \label{stab_e/20}
     \end{subfigure}
     \begin{subfigure}[b]{0.45\textwidth}
          \centering
          \includegraphics[width=\textwidth]{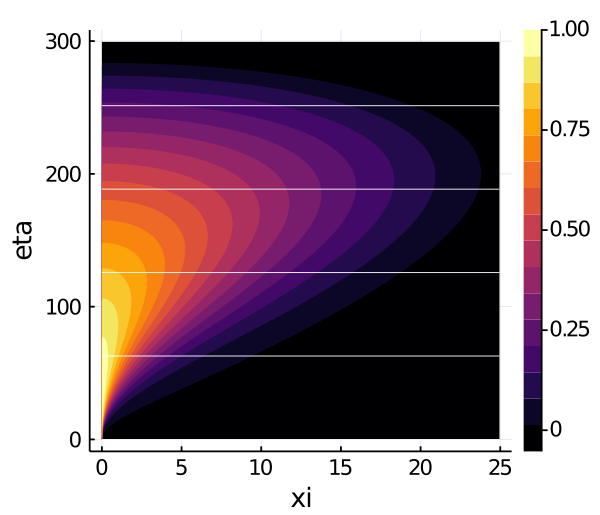}
          \caption{$K = \frac{K_e}{50}$}
          \label{stab_e/50}
     \end{subfigure}
     \caption{Maximum of the real part of the three eigenvalues, normalised by the higher one, for $(\xi,\eta) \in [0,20] \times [0,300]$. The most unstable areas are in yellow, and in the black parts the three eigenvalues have a negative real part, thus the system is stable.}
     \label{fig_stab}
\end{figure}

Then, in order to compare this observation with the stability analysis, we compute numerically the frequencies which create the most unstable modes.
In Figure~\ref{fig_stab}, we present numerical computations of the eigenvalues of the linearised system, depending on the frequencies. The largest real part of the three eigenvalues is represented, for each frequencies $\xi$ and $\eta$ in a bounded domain. This quantity control the stability, the system is stable if and only if it is negative. Due to the periodic boundary conditions, the frequencies allowed in the transverse direction are the $\eta_n = 2 \pi n / L_y$, where $L_y$ is the width of the domain. In Figure~\ref{fig_stab}, the white lines shows these frequencies. For both cases Figure~\ref{stab_e/20} and \ref{stab_e/50}, the system is unstable and the maximum of instability is reached at $\xi = 0$, $\eta_1 = 2 \pi / L_y$. Thus the system destabilises in the transverse direction, and the most destabilising frequency has period one. This is of the same order of magnitude as computed by the DFT of the simulations, where this frequency is $(\xi, \eta) = (0,\, 2 \times 2 \pi / L_y)$. The difference between the period of $1$ predicted by the stability analysis and the periods of $2-5$ obtained in the simulations could come from the non linear effects of the model, that are not taken in account in the stability analysis.

\subsection{Simulations including a source term for water}

In this part, we present some results of simulations when the source term $r$ is a positive function, to observe the effect of rain in the model. In the following simulations, $K=K_e/20$.

\begin{figure}[h!]
	\centering
     \begin{subfigure}[b]{0.49\textwidth}
          \centering
          \includegraphics[width=\textwidth]{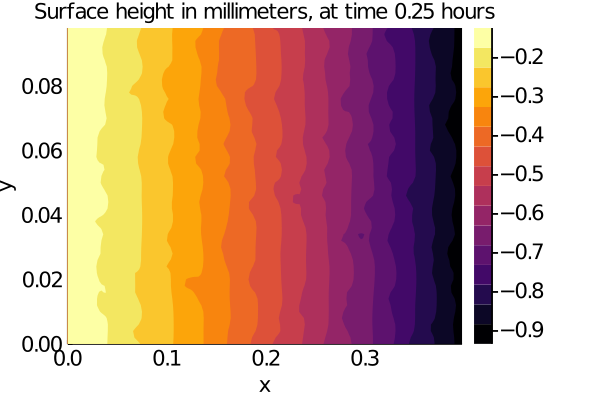}
     \end{subfigure}
     \begin{subfigure}[b]{0.49\textwidth}
          \centering
          \includegraphics[width=\textwidth]{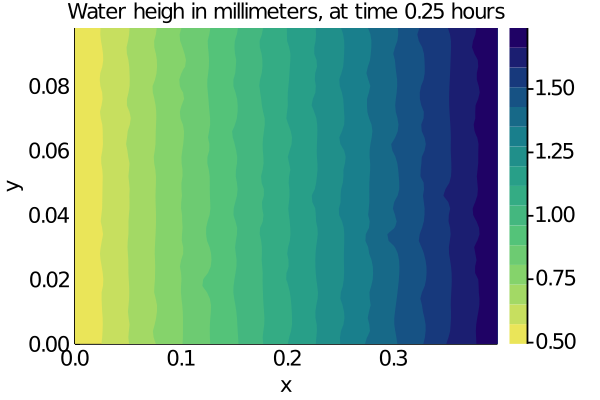}
     \end{subfigure}
     \caption{Results of the simulation when $r=r_1$, after $2000$ iterations.}
     \label{fig_simu_r2_early}
\end{figure}

First, we consider a constant in time, and uniform in space source term $r_1 = 0.005$mm/s. Figure~\ref{fig_simu_r2_early} shows the surface height (left figure) and water height (right figure) computed by the simulation, at time $0.25$ hours. We can observe that the water height is almost four times higher at the bottom of the domain ($x=L_x$), than at the top ($x=0$). As the erosion rate is proportional to a power of the water height $h$, the surface erodes faster at the bottom of the domain.

\begin{figure}[h!]
	\centering
     \begin{subfigure}[b]{0.49\textwidth}
          \centering
          \includegraphics[width=\textwidth]{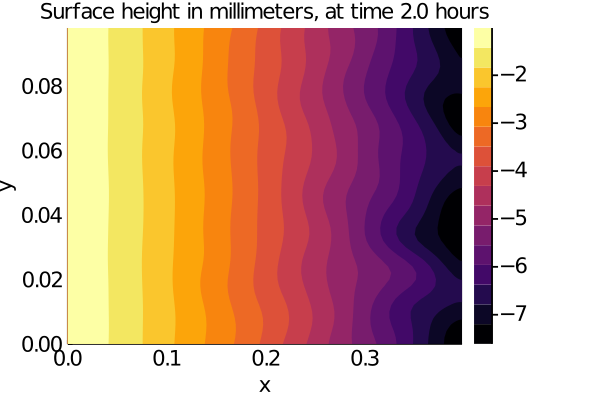}
     \end{subfigure}
     \begin{subfigure}[b]{0.49\textwidth}
          \centering
          \includegraphics[width=\textwidth]{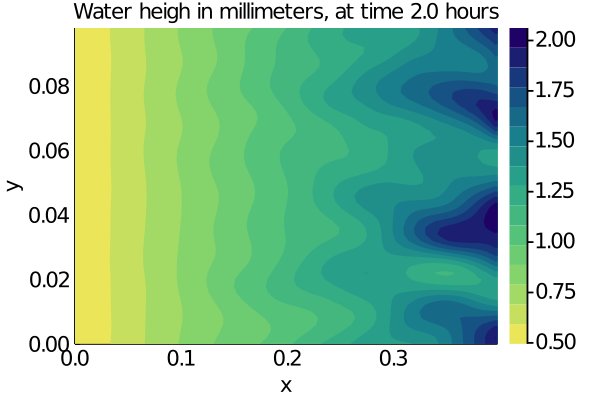}
     \end{subfigure}
     \caption{Results of the simulation when $r=r_1$, after $16000$ iterations.}
     \label{fig_simu_r2}
\end{figure}

Then, Figure~\ref{fig_simu_r2} shows the results of the same simulation, at time $2$ hours. The initial perturbations have almost disappeared, and some transverse perturbations have developed at the bottom of the domain. 

\begin{figure}[h!]
	\centering
     \begin{subfigure}[b]{0.49\textwidth}
          \centering
          \includegraphics[width=\textwidth]{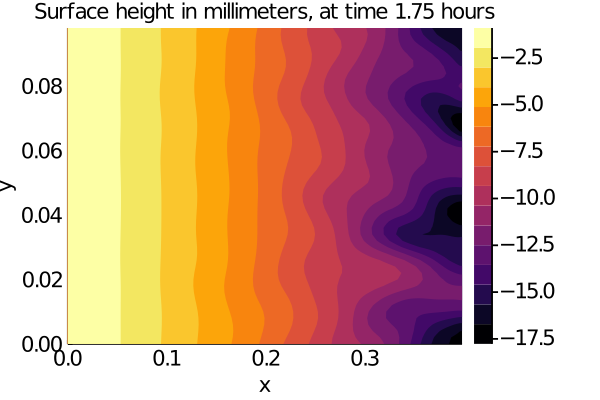}
     \end{subfigure}
     \begin{subfigure}[b]{0.49\textwidth}
          \centering
          \includegraphics[width=\textwidth]{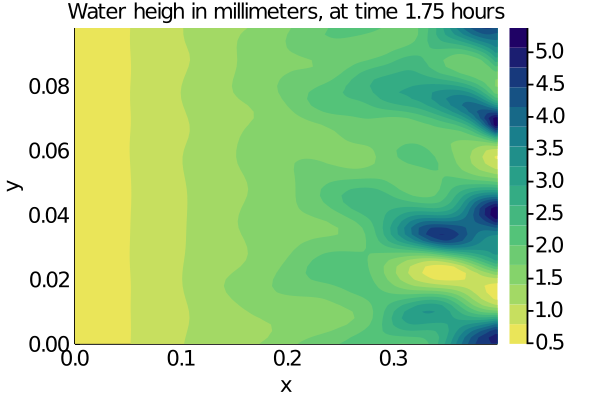}
     \end{subfigure}
     \caption{Results of the simulation when $r=r_1$, after $14000$ iterations.}
     \label{fig_simu_r1}
\end{figure}

Next, we choose a bigger source term: $r_2 = 0.01$m/s, and results of this simulation are shown in Figure~\ref{fig_simu_r1}. We can see that there are bigger perturbations at the bottom of the domain than with the source term~$r_1$. These perturbations are also not totally transverse: they undulate a little bit in the longitudinal direction, which is a different behaviour from the case without source term. 

\begin{figure}[h!]
	\centering
     \begin{subfigure}[b]{0.49\textwidth}
          \centering
          \includegraphics[width=\textwidth]{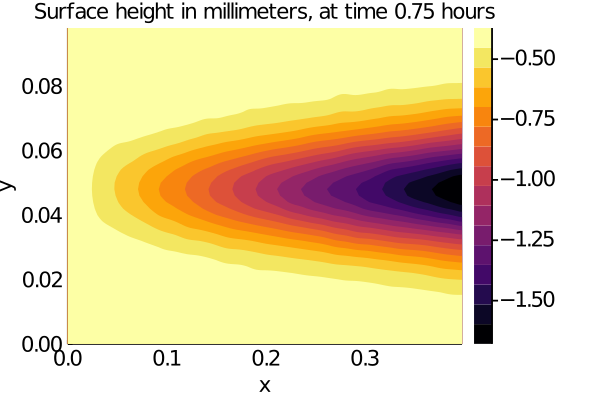}
     \end{subfigure}
     \begin{subfigure}[b]{0.49\textwidth}
          \centering
          \includegraphics[width=\textwidth]{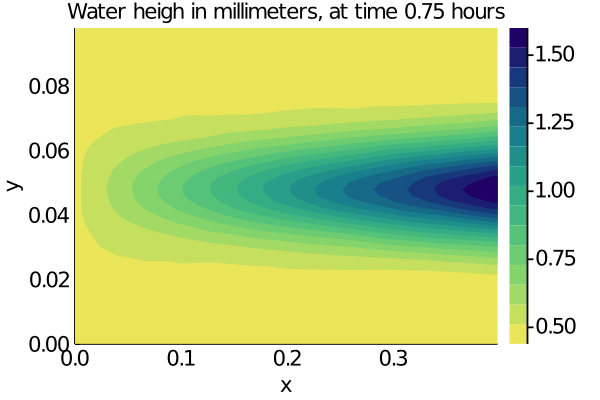}
     \end{subfigure}
     \caption{Results of the simulation when $r=r_3$, after $6000$ iterations.}
     \label{fig_simu_r3}
\end{figure}
Now we take a constant non uniform source term:
$$r_3(x,y) = 0.005 \exp \left(-60* \frac{(y-Ly/2)^2}{Ly^2} \right),$$
that correspond to a positive longitudinal band of rain in the middle of the domain which decrease exponentially fast when approaching $y=0$ or $y=0.1$. At time $0.75$ hours, the surface is highly eroded at the bottom of the band. Here, the evolution of the system is driven by the source term, the erosion landscape depends mainly on this term.

These simulation show that the source term may quantitatively influence the results, although qualitatively they look similar to the case with no source term.

\section{Conclusion and perspectives} \label{sec_ccl}

In this paper, we have considered a model for the evolution of landscape subject to water erosion, in order to study the formation of patterns. This model takes into account the water flow, the dissolved sediments and the main effects of erosion and sedimentation, while remaining simple enough to be studied both theoretically and numerically. We proved that under realistic hypotheses, the system is well posed locally in time. Then a complete spectral analysis of a linearization of the system around stationary solutions has been performed. This analysis highlights the various behaviors of the system depending on the parameters and on the frequencies of the perturbations. A very important parameter of the system is the constant of creep $K$, which controls the stability. The system can become stable if this constant is large enough, and is unstable if this constant is too small. Moreover, the instabilities in the system appears in the transverse direction, and this leads to the formation of channels parallel to the water flow. \smallskip

This analysis is a preliminary step in the study of pattern formation on erodible surfaces. We have shown that an instability mechanism can explain the formation of parallel channels at the early stages of erosion. We plan to perform a more complete parametric study of the system, in order to understand the various behaviors of the model. This analysis should be completed by numerical simulation to illustrate these behaviors.
There are several interesting direction of research. First, the analysis of this model could be extended to more general landscapes like mountains or valley. In these cases, the well-posedness of the system in short time can be still valid, as long as the water level and the water speed does not vanish. A similar stability analysis could be carried out if steady states exist.
One could also consider more involved models. Indeed a lot of factors are not taken into account as weather, vegetation, animal and human activities. It is be quite difficult to include these factors in the model, but a possible amelioration would be to include randomness in the equations. A random term could allow to model these factors which fluctuate over time. Moreover, a non constant source term, that would model the variations of rain in time would make the model more realistic. Another potential improvement of the model would be to consider more complex laws for the fluid velocity or even consider shallow water type models for the evolution of the layer of fluids.

\appendix

\section{Proof of the stability theorem} \label{app_proof th stab}

This section is devoted to the proof of Theorem~\ref{th_stabDomain}. First we state the Routh Hurwitz criteria for complex polynomial of degree $3$, proven in \cite{morris1962routh}:
\begin{proposition}
Let $P(X) = X^3 + (a_1+i b_1) X^2 + (a_2+i b_2) X + a_3+i b_3$ be a polynomial with 
complex coefficients. Then the roots of $P$ have a positive imaginary part if and only if the 
three following conditions are satisfied :
\begin{align*}
\left\{ \begin{array}{ll}
-\Delta_2 = -\left| \begin{array}{ll} 1 & a_1 \\ 0 & b_1 \end{array} \right| > 0 \\
\Delta_4 = \left| \begin{array}{llll} 1 & a_1 & a_2 & a_3 \\ 0 & b_1 & b_2 & b_3\\
     0 & 1 & a_1 & a_2 \\ 0 & 0 & b_1 & b_2 \end{array} \right| > 0, & \quad
-\Delta_6 = -\left| \begin{array}{llllll} 1 & a_1 & a_2 & a_3 & 0 & 0 \\ 
    0 & b_1 & b_2 & b_3 & 0 & 0 \\ 0 & 1 & a_1 & a_2 & a_3 & 0 \\ 
    0 & 0 & b_1 & b_2 & b_3 & 0 \\ 0 & 0 & 1 & a_1 & a_2 & a_3 \\ 
    0 & 0 & 0 & b_1 & b_2 & b_3 \end{array} \right| > 0
\end{array} \right.
\end{align*}
\end{proposition}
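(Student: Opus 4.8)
The plan is to prove this complex Routh--Hurwitz criterion by combining the argument principle with Hermite--Biehler interlacing theory. The key starting observation is that, although the coefficients of $P$ are complex, its restriction to the real axis splits cleanly: for real $X = t$ one has $P(t) = f(t) + i\,g(t)$ with $f(t) = t^3 + a_1 t^2 + a_2 t + a_3$ and $g(t) = b_1 t^2 + b_2 t + b_3$ both \emph{real} polynomials. Counting the roots of $P$ lying in the open upper half-plane therefore reduces to a winding-number computation for the planar real curve $t \mapsto (f(t), g(t))$, which is precisely the setting in which the Hurwitz determinants arise.

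First I would apply the argument principle to $P$ on the boundary of a large half-disk $\{|X| \leq R,\ \mathrm{Im}\,X \geq 0\}$. The contribution of the large semicircle is fixed by the leading term $X^3$, while the contribution of the real segment $[-R, R]$ is the total variation of $\arg\big(f(t) + i g(t)\big)$. All three roots lie in the upper half-plane if and only if this total winding attains its maximal value, which by the standard Sturm/Cauchy-index bookkeeping happens exactly when the real zeros of $f$ and $g$ are real, simple and strictly interlace with the correct orientation, i.e. when $(f,g)$ is a Hermite--Biehler ``positive pair.''

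Next I would convert the interlacing condition into the stated determinants. The classical tool is Hermite's theorem: the Cauchy index $I_{-\infty}^{+\infty}(g/f)$ equals the signature of the Bézoutian (Hermite) form $B(f,g)$. Proper interlacing with the right sign is equivalent to definiteness of $B(f,g)$, and by Sylvester's (Jacobi's) criterion definiteness is equivalent to a sign condition on the leading principal minors of $B(f,g)$. The final step is a purely algebraic determinant identity showing that these principal minors coincide, up to positive multiplicative factors, with $\Delta_2$, $\Delta_4$ and $\Delta_6$, thereby producing the signs $-\Delta_2 > 0$, $\Delta_4 > 0$, $-\Delta_6 > 0$. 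Because $\deg P = 3$ is fixed, this identity can alternatively be verified by a direct expansion of the $2\times 2$, $4\times 4$ and $6\times 6$ determinants against the entries of the $3\times 3$ Bézoutian.

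The main obstacle I anticipate is exactly this matching of two different determinantal encodings of the same data: the Hankel-type minors coming from $B(f,g)$ versus the interlaced-row Hurwitz minors $\Delta_{2k}$. Getting the multiplicative factors to be positive and the three signs to emerge as $(-,+,-)$ requires careful tracking of the row/column permutations and of the normalization $a_0 = 1$, $b_0 = 0$; a Schur-complement (determinant condensation) argument is the cleanest way to organize it. A secondary technical point is the degenerate case where $f$ and $g$ share a real zero, equivalently a root of $P$ on the real axis: there the winding is not maximal and at least one determinant vanishes, so the strict inequalities correctly exclude these configurations, but this boundary case must be handled separately to make the ``if and only if'' fully rigorous.
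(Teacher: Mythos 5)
Your outline is sound, but note first that the paper itself contains no proof of this proposition: it is quoted as a known result from \cite{morris1962routh} (Frank's Routh--Hurwitz criterion for complex polynomials of degree three), and the appendix only applies it. What you propose is essentially a reconstruction of the classical literature proof: splitting $P(t)=f(t)+ig(t)$ on the real axis, using the argument principle on a large half-disk so that having all three roots in the open upper half-plane is equivalent to an extremal Cauchy index $I_{-\infty}^{+\infty}(g/f)$, converting that via Hermite's theorem into definiteness of the Bézoutian form $B(f,g)$, and then via Sylvester's criterion into sign conditions on its three leading principal minors, which agree with $\Delta_2$, $\Delta_4$, $\Delta_6$ up to positive factors. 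This is the standard Gantmacher-style derivation of generalized Routh--Hurwitz criteria, and since the degree is fixed at $3$ the final determinant identity can indeed be verified by brute-force expansion, so the approach has no structural gap --- though as written it is a plan rather than a complete proof, since the identity and the sign bookkeeping you flag as the main obstacle are deferred rather than carried out. The citation buys the authors brevity in an applied paper; your route buys a self-contained argument and, as a by-product, the count of roots in each half-plane when the determinants are nonzero but have other sign patterns. Two details deserve care if you complete it: (i) orientation --- with the normalization $a_0=1$, $b_0=0$, the first condition reads $-\Delta_2=-b_1>0$, i.e. $b_1<0$, consistent with the necessary condition that the sum of the imaginary parts of the roots equals $-b_1>0$ (test case $P(X)=(X-i)^3$), so the Hermite--Biehler ``positive pair'' convention must be fixed to produce the sign pattern $(-,+,-)$ rather than its opposite; (ii) besides common real zeros of $f$ and $g$, your boundary analysis should cover the degeneration $b_1=0$, where $\deg g<2$ and interlacing fails automatically --- the strict inequality $-\Delta_2>0$ correctly excludes it, but it is a separate case from a vanishing resultant.
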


This proposition is directly used to prove Theorem~\ref{th_stabDomain}.

\begin{proof}[Proof of Theorem~\ref{th_stabDomain} ]
The characteristic polynomial of $A$, denoted by $P$, is :
\begin{align*}
P(X) &= 
\left| \begin{array}{ccc}
 X+i\xi \tan \theta + \alpha \epsilon^2 \underline{h} & 0 & \alpha \epsilon^2 \underline{h}\\
\displaystyle - \frac{am\underline{c}}{\underline{h}} + \frac{\alpha i\xi a n \underline{c}}{\tan \theta} 
& X + a +  i\xi \tan \theta 
& \displaystyle \frac{\alpha i\xi an\underline{c}}{\tan \theta} \\ 
\displaystyle \frac{am\underline{c}}{\bar \rho_s} - \alpha i\xi \frac{an\underline{h}\underline{c}}{\bar \rho_s \tan \theta} 
& \displaystyle - \frac{a \underline{h}}{\bar \rho_s} 
& \displaystyle X - \alpha i\xi \frac{an\underline{h}\underline{c}}{\bar \rho_s \tan \theta} + \epsilon^2 K
\end{array} \right| \\[1em]
& = 
\left| \begin{array}{ccc}
X+i\xi \tan \theta + \alpha \epsilon^2 \underline{h} & 0 & \alpha \epsilon^2 \underline{h}\\
\displaystyle - \frac{am\underline{c}}{\underline{h}}+ \frac{\alpha i\xi a n \underline{c}}{\tan \theta} & X + a + i\xi \tan \theta 
& \displaystyle \frac{\alpha i\xi an\underline{c}}{\tan \theta}\\
0 & \displaystyle \frac{\underline{h} X+ i\xi \underline{h} \tan \theta}{\bar \rho_s} &  X + \epsilon^2 K
\end{array} \right|.
\end{align*}
where we have denoted $\epsilon^2  = \xi^2 + \eta^2$. In order to simplify the computations, we introduce the variables 
$$
\displaystyle
Y := X+i\xi \tan \theta,\quad \bar \xi = \xi \tan \theta,\quad \bar \epsilon^2 = \alpha \underline{h} \epsilon^2,\quad \bar K \, \alpha \underline{h} = K\quad, \bar \rho_s \bar h = \underline{h}.$$ We also define the constants $N = \alpha an \underline{c} / \tan^2 \theta$, $M = am \underline{c} / \underline{h}$.
Consequently the polynomial writes as:
\begin{align}
P(Y) &=
\left| \begin{array}{ccc}
Y+ \bar \epsilon^2 & 0 & \bar \epsilon^2\\
- M + i\xi N & Y + a & i \bar \xi N\\
0 & \bar h Y & Y + \bar \epsilon^2 K - i \bar \xi
\end{array} \right| \nonumber \\
&= - \hh Y \left(i \bar \xi N Y + \be M \right) + (Y+a)(Y+\be )(Y-i \bar \xi + \be \bK) \nonumber \\
& \begin{array}{ll} 
     \hspace{-0.9mm}= & Y^3 + \Big[ a + \be (1+\bK) - i \bar \xi (1+N \hh) \Big] Y^2 + \Big[\be (a(1+\bK)-\hh M +\bK \be) - i \bar \xi (a+\be) \Big] Y\\ 
     & + a\be (\be \bK - i \bar \xi) .
\end{array} 
\label{polCar2}
\end{align}
The system is stable if and only if the roots of $X \mapsto P(X)$ have a negative real part. As $Y$ have the same real part as $X$, this is equivalent to the fact that the roots of $Y \mapsto P(Y)$ have a negative real part. Denoting $i\lambda := Y$, the system is stable if and only if the roots of $\lambda \mapsto P(i\lambda)$ have a positive imaginary part. Thus we will apply the Routh Hurwitz criteria to the polynomial $ \lambda \mapsto Q(\lambda) := iP(i\lambda)$ where $i\lambda := Y$:
\begin{align*}
     Q(\lambda) = & \lambda^3 - \Big[\bar \xi (1+N \hh) + i \left(a + \be (1+\bK) \right)  \Big]\lambda^2 - \Big[\be (a(1+\bK)-\hh M +\bK \be) - i\bar \xi (a+\be) \Big] \lambda\\
     &+ a\be (\bar \xi + i \be \bK).
\end{align*}
We denote:
\begin{align*}
     \left\{ \begin{array}{l} \ba = 1+N \hh,\\ \bb = a + \be (1+\bK), \end{array} \right. \qquad
     \left\{ \begin{array}{l} \baa = a(1+\bK) - \hh  M + \be \bK, \\ \bbb = a + \be. \end{array} \right.
\end{align*}
Thus $Q(\lambda) = \lambda^3 - \left(\bar \xi \ba + i\bb)  \right) \lambda^2 + \left(-\be \baa + i\bar \xi \bbb \right) \lambda + a\be (\bar \xi + i \be \bK)$.

\paragraph{The first condition : }
The determinant $-\Delta_2$ writes as :
\begin{align*}
     -\Delta_2 = -\left| \begin{array}{cc} 1 & -\bar \xi \ba \\ 0 & -\bb \end{array} \right| = \bb = a + \be (1+\bK) .
\end{align*}
Consequently, as $a > 0$ and $K \geq 0$, the condition $-\Delta_2>0$ is always satisfied.

\paragraph{The second condition : }
The determinant $\Delta_4$ is given by :
\begin{align*}
     \Delta_4 &= \left| \begin{array}{ccc} -\bb & \bar \xi \bbb &  \bar \epsilon^4 a \bK\\
     1 & -\bar \xi \ba & -\be \baa \\ 0 & -\bb & \bar \xi \bbb \end{array} \right| =
     \left| \begin{array}{ccc} -\bb & \bar \xi (\bbb - \ba \bb) &  \be (\be a \bK - \bb \baa)\\
          1 & 0 & 0 \\ 0 & -\bb & \bar \xi \bbb \end{array} \right| \\
     &= \be \left( \frac{\bar \xi^2}{\be} \bbb \left(\ba \bb - \bbb \right) + \bb \left(\baa \bb - \be a \bK \right) \right).
\end{align*}
Let $\displaystyle t = \frac{\bar \xi^2}{\be} \in [0, \frac{\mu \tan^2 \theta}{e \underline{h}}]$, then one finds that
$\Delta_4 > 0 \; \forall \, (\xi, \eta) \in (\RR^2)^*$ if and only if  $$\displaystyle t \bbb \left(\ba \bb - \bbb \right) + \bb \left(\baa \bb - \be a \bK \right) > 0 \; \forall \, t \in [0, \frac{\mu \tan^2 \theta}{e \underline{h}}], \, \forall \be >0.$$
The terms $\bb$ and $\bbb$ are positive, and :
$$\ba \bb - \bbb = a + \be (1+\bK) + N \hh (a+\be \left(1+\bK)\right) - a - \be = \be \bK + N\hh \left(a+\be(1+\bK)\right) > 0.$$
Consequently, one has $\Delta_4 > 0, \; \forall \, (\xi, \eta) \in (\RR^2)^*$ if and only if $$\baa \bb - \be a \bK \geq 0 \; \forall \, \be > 0.$$ 
We compute:
\begin{align*}
     \baa \bb - \be a \bK &= \left( a(1+\bK) - \hh  M + \be \bK \right) \left(a + \be (1+\bK) \right) - \be a \bK \\
     &= a \left(a(1+\bK)-\hh M \right) + \be (1+\bK) \left(a(1+\bK) - \hh  M + \be \bK \right) 
\end{align*}
As a consequence, one has $\Delta_4 > 0 \; \forall \, (\xi, \eta) \in (\RR^2)^*$ if and only if $a(1+\bK)- \hh M \geq 0$. This condition is equivalent to:
$$
\displaystyle
 \frac{\bar \rho_s}{\underline{c}} + \frac{K \bar \rho_s}{\alpha \underline{h} \underline{c}}-m  \geq 0.
$$

\paragraph{The third condition : }
The determinant $\Delta_6$ is given by:
\begin{align*}
     \Delta_6 &=
     -\bb \left| \begin{array}{cccc} \bar \xi (\bbb - \ba \bb) & \be ( \be a \bK -\baa \bb) & \bar \xi \be a \bb & 0\\ -\bb & \bar \xi \bbb &  \bar \epsilon^4 a \bK & 0 \\ 1 & -\bar \xi \ba & -\be \baa &  \bar \xi \be a \\ 0 & -\bb & \bar \xi \bbb &  \bar \epsilon^4 a \bK \end{array} \right|\\[1em]
     & = -\bb^2 \left| \begin{array}{ccc} \be ( \be a \bK -\baa \bb) + \bar \xi^2 \ba (\bbb - \ba \bb) & \bar \xi \be a \bb + \bar \xi \be \baa (\bbb - \ba \bb) & - \bar \xi^2 \be a (\bbb - \ba \bb) \\ \bar \xi (\bbb - \ba \bb) &  \be (\be a \bK - \baa \bb) & \bar \xi \be a \bb \\ -\bb & \bar \xi \bbb &  \bar \epsilon^4 a \bK \end{array} \right|.
\end{align*}
We can factorize out the term $\be$ in the third column. Then we develop this expression with respect to the third row. One has:
$$
\Delta_6=-\overline{\varepsilon}^6\overline{b}_1^2\left(T_0 + t T_1 + t^2 T_2\right)
$$
with
\begin{align*}
     T_0 & = \be a \bK \left(\be a \bK - \baa \bb \right)^2 > 0,\\
     T_1 & = \be a \bK \Big[\be a \bK \ba \left(\bbb - \ba \bb \right) + \left(\ba \bb - \bbb \right) \left(a \bb + \baa \bbb \right) - a \bb \bbb + a \bb \left(\ba \bb - \bbb \right)\Big]\\
     & \qquad + a \baa \bb^2 \bbb - a^2 \bb^3 \\
     & =  \be a \bK \Big[\left(\ba \bb - \bbb \right) \left(2 a \bb + \baa \bbb - \be a \ba \bK \right) - a \bb \bbb \Big] + a \bb^2 \left(\baa \bbb - a \bb \right), \\
     T_2 & = a \bbb^2 \left(\ba \bb - \bbb \right) > 0.
\end{align*}
We thus have to determine the sign of $T_1$. First, one finds that
\begin{align*}
     \ba \bbb - a \bb & = \left(a \left(\cancel{1} +\bK \right) - \hh M + \be \bK \right) \left(a + \be \right) - a \left(\cancel{a} + \be \left(\cancel{1} + \bK \right) \right)\\
     & = \left(a \bK  - \hh M + \be \bK \right) \left(a + \be \right) - \be a \bK \\
     & = a \left(a \bK - \hh M \right) + \be \left(a \bK - \hh M \right) + \bar \epsilon^4 \bK.
\end{align*}
The constant term in $T_1$ is $a^4 \left(a \bK - \hh M \right)$.
When $\be = o(t)$, that is $\epsilon^4 = o(\xi^2)$, the dominant term in $\bar \Delta_6$ is the constant term of $T_1$ times $t$, that is $a^4 \left(a \bK - \hh M \right) t$. Thus a necessary condition for the positivity of $\bar \Delta_6$ is $a \bK \geq \hh M$.

We can write $-\frac{\Delta_6}{\bb^2 \bar \epsilon^6} := -\bar \Delta_6$ as a polynomial of degree two in $a_1$, with a negative coefficient in front of $a_1^2$ :
\begin{align*}
     -\bar \Delta_6 & = - \epsilon^4 a^2 K^2 \bb t \ba^2 + \Big[ \be a \bK \bb t \left(2\ba \bb + \baa \bbb) + \be a \bK \bbb \right) + a \bb \bbb^2 t^2\Big] \ba\\
     & + a \bb^2 \left(\baa \bbb - a \bb \right) t - \be a \bK \Big[\bbb \left(2 a \bb + \baa \bbb \right) + a \bb \bbb \Big] t - a \bbb^3 t^2 + \be a K \left(\baa \bb - \be a K \right)^2.
\end{align*}
The  term $\ba$ writes as $\ba = 1 + N \hh \geq 1$. When $N = 0$ and $a \bK \geq \hh M$, we can verify that $\forall (\xi,\eta) \in (\RR^2)^*$, $-\bar \Delta_6 > 0$. Consequently when $a \bK \geq \hh M$ the polynomial $-\bar \Delta_6 (\ba)$ has two roots, the first one $x_1 < 1$ and the second one $x_2 > 1$. Now we suppose that $a \bK \geq \hh M$, and we define 
$$\delta = \inf \{x_2(\xi,\eta)) ; \, (\xi,\eta) \in (\RR^2)^*\} \quad \geq 1.$$
According to the asymptotic calculus of stability, when $a \bK \geq \hh M$ the system is stable near $0$ and $+\infty$. Consequently $\delta$ is an infimum of a continuous function on a compact set of $\RR^2$, thus it admits a minimum. To conclude, $\Delta_6 > 0 \; \forall (\xi,\eta) \in (\RR^2)^*$ if and only if $\ba < \delta$ and $a \bK \geq \hh M$. That is, denoting $\gamma = \tan^2 \theta (\delta-1)/(\alpha n \underline{h} \underline{c})$, this condition reduces to
\begin{align*}
     \displaystyle
    n < \gamma \quad \text{and} \quad \alpha m \underline{h} \underline{c} \leq \bar \rho_s K .
\end{align*}

\paragraph{The third condition when $K = 0$:}

In this case, the computations on $\Delta_6$ are fully explicit. The third  determinant reads:
\begin{align*}
     -\frac{\Delta_6}{\bb^2 \bar \epsilon^6} &= t a \bb^2 \left(\baa \bbb - a \bb \right) + t^2 a \bbb^2 \left(\ba \bb - \bbb \right)\\
     & = t a \bar h \left(a + \be \right)^3 \left( -M + t N \right).
\end{align*}
Thus the stability is directly related to the  sign of $tN-M$. As $\displaystyle t = \frac{\bar \xi^2}{\be} = \frac{\tan^2 \theta}{\alpha \underline{h}} \frac{\xi^2}{\epsilon^2}$, and $M = \frac{a m \underline{c}}{\underline{h}}$, $N = \alpha \frac{a n \underline{c}}{\tan^2 \theta}$, then this condition reads
\begin{align*}
     m < \frac{\xi^2}{\epsilon^2}  n.
\end{align*}
This concludes the proof of Theorem~\ref{th_stabDomain}.
\end{proof}

\section{Proof of the stability results at low frequencies} \label{app_stab_low}

In this section we prove Proposition ~\ref{prop_lowfr2}. 

\subsection{The first case} \label{stab low1}

In this part, we consider the case $\xi\to 0, \eta\to 0$ with $\xi = O(\eta^2)$. We write the characteristic polynomial of the matrix $A(\xi,\eta)$, and use the fact that $\lambda^i(0,0) = 0$ ($i = 1$, $2$) to calculate the two roots of this polynomial bifurcating from zero. As in the proof of Theorem~\ref{stab RH}, we define the variables $Y := X+i\xi \tan \theta$, $\bar \xi = \xi \tan \theta$, $\bar \epsilon^2 = \alpha \underline{h} \epsilon^2$, $\bar K \, \alpha \underline{h} = K$, and $\bar \rho_s \bar h = \underline{h}$. We also define the constants $N = \alpha an \underline{c} / \tan^2 \theta$, $M = am \underline{c} / \underline{h}$. We recall that the characteristic polynomial of $A$, denoted by $P$ is given by Equation~\eqref{polCar2} :
\begin{align*}
     \displaystyle P(Y) = & Y^3 + \Big[ a + \be (1+\bK) - i \bar \xi (1+N \hh) \Big] Y^2 + \Big[\be (a(1+\bK)-\hh M +\bK \be) - i \bar \xi (a+\be) \Big] Y\\
     & + a\be (\be \bK - i \bar \xi).
\end{align*}
The zero order term of the eigenvalues $\lambda^1$ and $\lambda^2$ is zero, thus $\lambda^i = O(\eta^2)$ for $i=1,2$. Consequently $\lambda^1$ and $\lambda^2$ are solution of the approximate equation:
\begin{align*}
\displaystyle
Y^2 + \Big[- i \bar \xi + \bar \eta^2 (1+\bK-\hh M/a) \Big] Y + \bar \eta^2 (\bar \eta^2 \bK - i \bar \xi)= O(\eta^6),
\end{align*}
where $\bar \eta^2 = \alpha \underline{h} \eta^2$.
Therefore, one has
\begin{align*}
\left\{ \begin{array}{ll}
\displaystyle
\lambda^1(\xi,\eta) = -\frac{i \bar \xi + \bar \eta^2 (1+\bK-\hh M/a)}{2} + \frac{1}{2} \sqrt{\Delta} + O(\eta^3),\\
\displaystyle
\lambda^2(\xi,\eta) = -\frac{i \bar \xi  \bar \eta^2 (1+\bK-\hh M/a)}{2} - \frac{1}{2} \sqrt{\Delta} + O(\eta^3), \vspace{1mm} \\
\displaystyle
\Delta = -\bar \xi^2 + 2i\bar \xi \bar \eta^2 (1+\bK-\hh M/a) + \bar \eta^4 ((1+\bK-\hh M/a)^2 -4 \bar K) + O(\eta^6).
\end{array} \right.
\end{align*}
When $\xi$ is of the order of $\eta^2$, we can write $\bar \xi = q \bar \eta^2$ with $q \neq 0$. Thus,
$$\Delta = \left(-q^2 + 2i q (1+\bK-\hh M/a) + ((1+\bK-\hh M/a)^2 -4 \bar K) \right) \bar \eta^4 + O(\eta^6).$$
Therefore we cannot compute explicitly the expression of its square roots, neither directly determine the sign of the real part of the eigenvalues. Consequently, we compute their sign. First, the real part of the square root of the discriminant $\Delta$ is given by:
\begin{align*}
     \Re(\sqrt{\Delta}) = \sqrt{\frac{Re(\Delta)+|\Delta|}{2}}, \quad \text{where:} \quad
     |\Delta|^2 & = \bar \xi^4 + 2 \bar \xi^2 \bar \eta^4 \left(2(1-\bK+\hh M/a)^2 - (1+\bK-\hh M/a)^2 +4 \bar K \right)\\
     & + \bar \eta^8 \left((1+\bK-\hh M/a)^2 - 4 \bar K \right)^2 + O(\eta^{12}).
\end{align*}

We study the sign of $\Re(\lambda^1)$: since $\Re(\lambda^1) \geq \Re(\lambda^2)$, this will determine the spectral stability of the system. One has:
\begin{align*}
Re(\lambda^1) &= -\frac{\bar \eta^2(1+\bK-\hh M/a)}{2} + \frac{1}{2}\sqrt{\frac{Re(\Delta)+|\Delta|}{2}} + O(\eta^3) > 0\\
&\Leftrightarrow \quad \sqrt{\frac{Re(\Delta)+|\Delta|}{2}} > \bar \eta^2 (1+\bK-\hh M/a) + O(\eta^3).
\end{align*}
We then study two cases, depending on the sign of $1+\bK-\hh M/a$ :
\begin{itemize}
\item If $0\leq \bK \leq \hh M/a - 1$, that vis $1+\bK-\hh M/a \leq 0$, then $Re(\lambda^1)+ O(\eta^3) > 0$. This situation can occurs only if $\hh M \geq a$.

\item If $\bar K > \hh M/a - 1$, then 
\begin{align*}
\Re(\lambda^1) > 0 \quad 
&\Leftrightarrow \quad Re(\Delta) + |\Delta| > 2\bar \eta^4 (1+\bK-\hh M/a)^2 + O(\eta^6) \quad\\
&\Leftrightarrow \quad - \bar \xi^2 - 4 \bar \eta^4 \bar K + |\Delta| > \bar \eta^4 (1+\bK-\hh M/a)^2 + O(\eta^6)\\
&\Leftrightarrow \quad |\Delta|^2 > \left(\bar \xi^2 + \bar \eta^4 ((1+\bK-\hh M/a)^2 +4 \bar K)\right)^2 + O(\eta^{12})\\
&\Leftrightarrow \quad \bar \xi^2 (\hh M/a - \bar K) > \bar \eta^4 \bar K(1+\bK-\hh M/a)^2  + O(\eta^{8}).
\end{align*}
\end{itemize}

Consequently, if $\bar K \geq \hh M/a$ then $Re(\lambda^1) + O(\eta^3) < 0$, and if $\hh M/a - 1 \leq K < \hh M/a$ then $Re(\lambda^1) + O(\eta^3) > 0 \Leftrightarrow \frac{\bar \xi^2}{\bar \eta^4} + O(\eta^4) > f(\bar K)$, with 
\begin{align}
     f(\bar K) = \bar K \frac{(1+\bK-\hh M/a)^2}{\hh M/a - \bK}.
     \label{eq_function_f}
\end{align}

\subsection{The second case} \label{second case}

In this section, we consider values of $\xi$ and $\eta$ which go to zeros, with $\eta^2 = o(\xi)$. The matrix $A$ has no zero order terms in $\xi$ and $\eta$, and no first order terms in $\eta$ so we can write the Taylor expansion of the eigenvalues as $\lambda(\xi, \eta) = \lambda_0 + \lambda_1(\xi) + \lambda_2 (\xi^2, \eta^2) + o(\xi^2, \eta^2)$. The method of the previous section gives only the first order terms of the expansion, but in this case they are imaginary. Consequently, we need to calculate the second order terms and the previous method is much more complicated in this case, so we proceed differently. The equation satisfied by $\lambda$ and $V$ is
$$\forall \xi, \eta \in \RR, \quad A(\xi, \eta) V(\xi, \eta) = \lambda(\xi, \eta) V(\xi, \eta).$$
We compute the terms of the Taylor expansion of $\lambda$ and $V$ step by step using this equation and identifying the terms of same order. The Taylor expansion of $V$ is written as $V(\xi, \eta) = V_0 + V_1(\xi) + V_2 (\xi, \eta) + o(\xi^2 + \eta^2)$ (up to a multiplicative constant). For the following calculations we will solve equations of the form $A_0X=Y$ where $Y \in Im (A_0)$. The matrix $A_0$ has a one dimensional image, generated by the vector $X^3$ (which is defined below). Consequently, the solutions $X$ are of the form $\alpha X^3 + K$, where $K \in \ker A_0$.  We also need to compute the eigenvalues and the eigenvectors of $A_0$. The eigenvalues are $ 0$, $0$, $ -a$, and the associated eigenvectors are
\begin{align*}
X^1 =
\left[ \begin{array}{c}
0 \\
0\\
1
\end{array} \right],
& \quad X^2 =
\left[ \begin{array}{c}
\underline{h} \\
m \underline{c}\\
0
\end{array} \right],
\quad X^3 =
\left[ \begin{array}{c}
0 \\
\bar \rho_s \\
-\underline{h}
\end{array} \right].
\end{align*}
Moreover, the two left eigenvectors of $A_0$ associated to the eigenvalue $0$ are used to cancel some terms in the calculus. These vectors are: 
\begin{align*}
X^{*}_1 = \left[ 1, 0, 0 \right],
& \quad X^{*}_2 = \left[0, \underline{h}, \bar \rho_s  \right].
\end{align*}

\paragraph{Order $0$ : }
When $\xi = \eta = 0$ the system $AV = \lambda V$ reduces to $A_0V_0 = \lambda_0 V_0$, so $\lambda$ and $V$ are respectively the eigenvalues and the eigenvectors of $A_0$. Consequently $\lambda_0^1 = \lambda_0^2 = 0$ and $V_0^1$, $V_0^2 \in vect(X^1,X^2)$.

\paragraph{Order $1$ :}
We identify the first order terms of the equation $A V = \lambda V$ :
\begin{equation}
A_0 V_1 + i\xi A_1 V_0 = \lambda_1 V_0,
\label{ordre1}
\end{equation}
because $\lambda_0 = 0$. In order to compute $\lambda_1^1$ and $\lambda_1^2$ we multiply the system $\eqref{ordre1}$ by $X_1^*$ and $X_2^*$. We obtain the following system, denoting $V_0 = u X^1 + v X^2$ :
\begin{align*}
\left( \begin{array}{cc}
0 & \lambda_1\ + i \xi \tan \theta\\
 \lambda_1 & m \underline{c} (\lambda_1 + \alpha \frac{i \xi}{\tan \theta})
\end{array} \right)
\left( \begin{array}{c}
u \\  v \underline{h} 
\end{array} \right) = 0.
\end{align*}
This system admits two solutions up to a multiplying factor :
\begin{align*}
\lambda_1^1 = 0, \quad V_0^1 = 
\left[ \begin{array}{l} 0 \\ 0 \\ 1 \end{array} \right] 
\quad \text{ and } \quad
\lambda_1^2 = -i \xi \tan \theta, \quad V_0^2 = 
\left[ \begin{array}{l} \underline{h} \\ m \underline{c} \\ 0 \end{array} \right].
\end{align*}
Then, using Equation~\eqref{ordre1} we compute $V_1^2$ and $V_1^2$:
\begin{align*}
V_1^1 = -i\xi \frac{\alpha n \underline{c}}{\bar \rho_s \tan \theta} X^3 + K^1, \quad V_1^2 = -i\xi \frac{\alpha n \underline{h} \underline{c}}{\bar \rho_s \tan \theta} X^3 + K^2, \quad \text{with } K^1,\,K^2 \in \ker A_0 = \text{vect} (X^1,X^2).
\end{align*}
As $X^1 $ appears in the leading order of $V^1$, we can suppose (up to renormalisation of $V^1$) that $K^1 = k_1 X^2$ with $k_1 \in \RR$. Consequently,
$$V_1^1 = -i\xi \frac{\alpha n \underline{c}}{\bar \rho_s \tan \theta} X^3 + k_1 X^2.$$
Similarly, as $V^2 = X^2 + V^1_2 + o (\xi, \eta)$ we can write 
$$V_1^2 = -i\xi \frac{\alpha n \underline{h} \underline{c}}{\bar \rho_s \tan \theta} X^3 + k_2 X^1, \quad \text{with } k_2 \in \RR.$$ 

\paragraph{Order $2$ :}

The second order terms of the system $AV^i = \lambda^i V^i$ for $i \in \{1,2\}$ are:
\begin{equation}
A_0V_2 + i\xi A_1 V_1 - (\xi^2+\eta^2)A_2 V_0 = \lambda_1 V_1 + \lambda_2 V_0.
\label{ordre2}
\end{equation}
For $i=1$, we multiply System~\eqref{ordre2} by the two left eigenvectors of $A_0$ associated to the eigenvalue $0$ and we obtain the system :
\begin{align*}
\left( \begin{array}{cc}
i \xi \tan \theta \underline{h} & 0\\
 i\xi \tan \theta m \underline{h} \underline{c} & \bar \rho_s
\end{array} \right)
\left (\begin{array}{cc} k_1\\ \lambda_1^2 \end{array} \right)
= \left( \begin{array}{c}
-(\xi^2+\eta^2) \alpha \underline{h} \\ -\xi^2 \alpha n \underline{h} \underline{c} - (\xi^2+\eta^2) K \bar \rho_s
\end{array} \right).
\end{align*}
Consequently $\lambda_2^1 = -(K+ \alpha \frac{n\underline{h} \underline{c}}{\bar \rho_s} - \alpha \frac{m\underline{h} \underline{c}}{\bar \rho_s} ) \xi^2 - (K- \alpha \frac{m\underline{h} \underline{c}}{\bar \rho_s}) \eta^2$.\\
For the second eigenvalue we multiply System $\eqref{ordre2}$ by $X_1^*$ and $X_2^*$, and we obtain :
\begin{align*}
\left( \begin{array}{cc}
0& \alpha \underline{h}\\
 -i\xi \bar \rho_s & m \underline{h} \underline{c}
\end{array} \right)
\left (\begin{array}{cc} k_2\\ \lambda_2^2 \end{array} \right)
= \left( \begin{array}{c}
-(\xi^2+\eta^2) \underline{h}^2 \\ -\xi^2 \alpha n \underline{h} \underline{c} 
\end{array} \right).
\end{align*}
One finds $\lambda_2^2 = - \alpha \underline{h} (\xi^2 + \eta^2)$.

\paragraph{Conclusion :}
Finally, the three eigenvalues of $A(\xi,\eta)$ when $\xi, \eta \to 0$ and $\xi / \eta^2 \to +\infty$ are : 
\begin{align}
\left\{ \begin{array}{l}
\displaystyle
\lambda^1 = - \left(K+\frac{c_{sat}}{\rho_s} \alpha n\underline{h}\underline{c} - \frac{c_{sat}}{\rho_s} \alpha m\underline{h}\underline{c} \right) \xi^2 - \left(K-\frac{c_{sat}}{\rho_s}  \alpha m\underline{h}\underline{c} \right) \eta^2 + o \left(\xi^2 + \eta^2 \right), \vspace{0.2cm}\\
\displaystyle
\lambda^2 = -i \xi \tan \theta - \alpha \underline{h} \left(\xi^2 + \eta^2 \right) + o \left(\xi^2 + \eta^2 \right), \vspace{0.2cm}\\
\displaystyle
\lambda^3 = - \frac{s \rho_s}{e \underline{h} c_{sat}} + o(1).
\end{array} \right.
\label{eq_lowDL}
\end{align}
Consequently, at low frequencies when $\eta^2 = o(\xi)$, the eigenvalues $\lambda^2$ and $\lambda^3$ have a negative real part. Thus the instability is given by $\lambda_1$, and depends on the parameters. When $K \bar \rho_s > m \underline{h} \underline{c}$ then $\lambda^1$ has a negative real part so System $\eqref{eq_systlin}$ is stable around the stationary solution $(\underline{h},\underline{c},\underline{z})$.  When $\underline{h} \underline{c} (m  - n) < K \bar \rho_s <m \underline{h} \underline{c}$ then the system is stable in the longitudinal direction but not in the transverse direction. In the last case, when $K \bar \rho_s < \underline{h} \underline{c} (m  - n) $ then the system is unstable.
\qed

\section{Proof of the stability results at high frequencies} \label{app_stab_high}

In this Appendix, we prove the stability result at high frequencies, Proposition ~\ref{prop_stab_high}. We use the same method as in Subsection~\ref{second case}. 

\begin{proof}[Proof of Proposition ~\ref{prop_stab_high}]

The system $AV = \lambda V$ is written as
\begin{equation}
\left( A_2 - \frac{i\xi A_1 + A_0}{\xi^2 + \eta^2} \right) V(\xi, \eta) = -\frac{\lambda(\xi,\eta)}{\xi^2+\eta^2} V(\xi,\eta).
\label{eq_infini}
\end{equation}
We can suppose, up to renormalisation that $V(\xi,\eta) = \underset{+\infty}{O} (1)$. We calculate a Taylor expansion of $$\frac{\lambda(\xi,\eta)}{\xi^2+\eta^2} = \Lambda_0 + \Lambda_1(\xi, \eta) + \Lambda_2 (\xi, \eta) + o(1),$$
and of 
$$V(\xi, \eta) = V_0 + V_{1}(\xi, \eta) + V_{2} (\xi, \eta) + o(\frac{1}{\xi^2 + \eta^2}).$$

\paragraph{Order $0$ : }
When $\xi^2+\eta^2 \to +\infty$, System~\eqref{eq_infini} becomes $A_2V_0 = -\Lambda_0 V_0$. Its solutions are the eigenvalues and eigenvectors of $-A_2$, so $\Lambda_0^1 = -K$, $\Lambda_0^2 = -\alpha \underline{h}$ and $\Lambda_0^3 = 0$. The associated eigenvectors are
\begin{align*}
X^1 =
\left[ \begin{array}{c}
\alpha \underline{h} \\ 0\\ K - \alpha \underline{h}
\end{array} \right],
& \quad X^2 =
\left[ \begin{array}{c}
1 \\ 0\\ 0
\end{array} \right],
\quad X^3 =
\left[ \begin{array}{c}
0 \\ 1\\ 0
\end{array} \right].
\end{align*}
When $K >0$ there is one eigenvalue bifurcating from zero, $\Lambda^3$, and two other eigenvalues have negative real parts. When $K = 0$ the two eigenvalues $\Lambda^1$ and $\Lambda^3$ are bifurcating from zero. Consequently we will study these two cases.

\subsection{The case K > 0}

In this part, we continue the asymptotic expansion of the eigenvalue $\Lambda^3$. The zero order terms of the eigenvectors are $V_0^1 = X^1$, $V_0^2 = X^2$ and $V_0^3 = X^3$.\\

We calculate the first order term of $\Lambda_3$. The first order terms of System~\eqref{eq_infini} when $\xi^2+\eta^2 \to + \infty$ depends on the asymptotic behaviour of $i\xi$. If $\xi \to + \infty$ then the first order terms of the system are:
\begin{equation}
A_2V_{1}^3 - \frac{i\xi A_1}{\xi^2 + \eta^2}  V_0^3 = -\Lambda_1^3 V_0^3.
\end{equation}
And if $\xi$ is bounded, $i\xi A_1$ and $A_0$ are of the same order, thus the first order terms are:
\begin{equation}
     A_2V_{1}^3 - \frac{i\xi A_1+A_0}{\xi^2 + \eta^2}  V_0^3 = -\Lambda_1^3 V_0^3.
     \end{equation}
To compute $\Lambda_1^3$ we multiply the equations by the left eigenvector of $A_2$ associated to the eigenvalue $0$, $X^* = X^3$. Then, after some computations:
\begin{align*}
\begin{array}{lll}
     \text{If } \xi \to + \infty, \quad & \displaystyle \Lambda_1^3 = -\frac{i\xi \tan \theta}{\xi^2+\eta^2}, \quad & V_1^3 = \frac{1}{\xi}
     \left[ \begin{array}{l} 0\\1\\0 \end{array} \right].\\
     \text{If $\xi$ bounded}, \quad & \displaystyle \Lambda_1^3 = -\frac{i\xi \tan \theta}{\xi^2+\eta^2} - \frac{s \rho_s}{e \underline{h} c_{sat}} \frac{1}{\xi^2+\eta^2}.
\end{array}
\end{align*}

When $\xi$ is bounded we already computed the second order term of $\Lambda^3$. When $\xi \to +\infty$, the second order terms of the system are:
\begin{equation*}
A_2V_2^3 - \frac{i\xi A_1}{\xi^2 + \eta^2}  V_1^3 - \frac{A_0}{\xi^2 + \eta^2}  V_0^3 = -\Lambda_1^3 V_1^3 - \Lambda_2^3 V_0^3.
\end{equation*}
Then, one finds:
\begin{align*}
\Lambda_2^3 = -\frac{s \rho_s}{e \underline{h} c_{sat}} \frac{1}{\xi^2+\eta^2}.
\end{align*}

Finally, the asymptotic expansion of the eigenvalues of $A(\xi,\eta)$ when $\xi^2 + \eta^2 \to +\infty$ and $K>0$ is: 
\begin{align*}
\left\{ \begin{array}{l}
\displaystyle
\lambda_1(\xi,\eta) = -K (\xi^2+\eta^2) + o(\xi^2 + \eta^2) \vspace{0.2cm}\\
\displaystyle
\lambda_2(\xi,\eta) = -\alpha \underline{h}(\xi^2+\eta^2) + o(\xi^2 + \eta^2) \vspace{0.1cm}\\
\displaystyle
\lambda_3(\xi,\eta) =  -i\xi \tan \theta - \frac{s \bar \rho_s}{e \underline{h}} + o(1)
\end{array} \right.
\end{align*}

\subsection{The case K = 0}

When $K = 0$, the zero order terms of the eigenvalues $\Lambda_1$ and $\Lambda_3 $ vanish. Thus, in order to prove the second point of Proposition ~\ref{prop_stab_high}, we need to compute the next order terms of  these two eigenvalues.\\ 

The eigenvectors associated to the two zero eigenvalues of $-A_2$ are
\begin{align*}
X^1 =
\left[ \begin{array}{c}
\alpha \underline{h} \\ 0\\ - \alpha \underline{h}
\end{array} \right],
\quad X^3 =
\left[ \begin{array}{c}
0 \\ 1\\ 0
\end{array} \right].
\end{align*}
Thus, there exist constants $u_1,v_1,u_3,v_3$ such that $V_0^1 = u_1 X^1 + v_1 X^3$ and $V_0^3 = u_3 X^1 + v_3 X^3$. The left eigenvectors associated to the zeros eigenvalues of $-C$ are:
\begin{align*}
     X^1_* =
     \left[ \begin{array}{lll}
     0 & 0 & 1
     \end{array} \right],
     \quad X^3_* =
     \left[ \begin{array}{lll}
     0 & 1 & 0
     \end{array} \right].
\end{align*}

If $\xi \to + \infty$ then the first order terms of System~\eqref{eq_infini} are:
\begin{equation}
A_2V_{1}^3 - \frac{i\xi A_1}{\xi^2 + \eta^2}  V_0^3 = -\Lambda_1^3 V_0^3.
\label{eq_DLinfty_K0_1}
\end{equation}
If $\xi \neq 0$ is fixed, the first order terms are:
\begin{equation}
     A_2V_{1}^3 - \frac{i\xi A_1+A_0}{\xi^2 + \eta^2}  V_0^3 = -\Lambda_1^3 V_0^3.
\label{eq_DLinfty_K0_2}
\end{equation}

To compute $\Lambda_1^1$ and $\Lambda_1^3$, we multiply the equations~\eqref{eq_DLinfty_K0_1} and~\eqref{eq_DLinfty_K0_2} by $X^1_*$ and $X^3_*$. Then, when $\xi \to + \infty$, one has:
\begin{align*}
     \left\{ \begin{array}{ll}
          \Lambda_1^1 = 0, & V_0^1 = X^1 ,\\
          \displaystyle \Lambda_1^3 = -\frac{i\xi \tan \theta}{\xi^2+\eta^2}, & V_0^3 = X^3.
     \end{array} \right.
\end{align*}

And when $\xi \neq 0$ is fixed, 
\begin{align*}
     \left\{ \begin{array}{l}
          \displaystyle \Lambda_1^1 = \frac{s \underline{c} m}{e \underline{h}}, \\
          \displaystyle \Lambda_1^3 = -\left(i\xi \tan \theta + \frac{s \bar \rho_s}{e \underline{h}} \right) \frac{1}{\xi^2+\eta^2}.
     \end{array} \right.
\end{align*}

When $\xi \to +\infty$, the second order terms of System~\eqref{eq_infini} are:
\begin{equation*}
A_2V_2 - \frac{i\xi A_1}{\xi^2 + \eta^2}  V_1 - \frac{A_0}{\xi^2 + \eta^2}  V_0 = -\Lambda_1 V_1 - \Lambda_2 V_0.
\end{equation*}
Then, one obtains:
\begin{align*}
\Lambda_2^1 = \frac{s \underline{c}}{e \underline{h}} \left(m - \frac{\xi^2}{\epsilon^2}n \right), \quad \Lambda_2^3 = - \frac{s \bar \rho_s}{e \underline{h}} \frac{1}{\xi^2+\eta^2}.
\end{align*}

As a conclusion, the asymptotic expansion of the eigenvalues of $A(\xi,\eta)$, when $\xi^2 + \eta^2 \to +\infty$ and $\xi \neq 0$, $K = 0$ is: 
\begin{align*}
\left\{ \begin{array}{l}
\displaystyle
\lambda_1(\xi,\eta) = \frac{s \underline{c}}{e \underline{h}} \left(m - \frac{\xi^2}{\epsilon^2}n \right) + o(1), \vspace{0.2cm}\\
\displaystyle
\lambda_2(\xi,\eta) = -\alpha \underline{h}(\xi^2+\eta^2) + o(\xi^2 + \eta^2), \vspace{0.3cm}\\
\displaystyle
\lambda_3(\xi,\eta) =  -i\xi \tan \theta - \frac{s \bar \rho_s}{e \underline{h}} + o(1).
\end{array} \right.
\end{align*}

Finally, when $\xi = 0$ the characteristic polynomial $P$ can be factorised:
$$P(X) = X \left(X^2 + (a + \alpha \epsilon^2 \underline{h}) X + \alpha \epsilon^2 a \underline{h} \left(1-\frac{m \underline{c}}{\bar \rho_s} \right) \right).$$
Thus it has a zero root, $\lambda_1 = 0$, and the two other roots are 
\begin{align*}
     \displaystyle
     \lambda_{2,3} = \frac{-\alpha \epsilon^2 \underline{h} - a \pm \sqrt{(a-\alpha \epsilon^2 \underline{h})^2 + 4 \alpha \epsilon^2 \frac{am\underline{h} \underline{c}}{\bar \rho_s}}}{2}.
\end{align*}
Consequently the asymptotic development of the eigenvalues of the system when $\eta^2 \to + \infty$ and $\xi = 0$, $K=0$ are 
\begin{align*}
     \left\{ \begin{array}{l}
     \displaystyle
     \lambda_1(0,\eta) = 0, \vspace{0.3cm}\\
     \displaystyle
     \lambda_2(0,\eta) = -\alpha \underline{h} \eta^2 + o(\eta^2), \vspace{0.1cm}\\
     \displaystyle
     \lambda_3(0,\eta) =  \frac{s \bar \rho_s}{e \underline{h}} \left( \frac{m \underline{c}}{\bar \rho_s}-1 \right) + o(1).
     \end{array} \right.
\end{align*}

This achieves the proof of Proposition ~\ref{prop_stab_high}.
\end{proof}

\section{Numerical scheme} \label{appendix_scheme}

In this appendix, we give the numerical scheme used for the simulations of the system. 

\paragraph{Stationary regime for the water}

The ratio between the erosion speed and the fluid velocity is small and we will suppose that the time derivatives of $c$ and $h$ are small. In order to justify this assumption, we rescale the equations by introducing the characteristic variables $Z$ the eroded height, $L$ the characteristic length, and $T = Z/e$ the characteristic time. We define the dimensionless variables:
\begin{align*}
&h' := \frac{h}{H}, \quad z' = \frac{z}{Z}, \quad v' := \frac{v}{V}, \quad c' := \frac{c}{c_{sat}},\\
&x' := \frac{x}{L}, \quad t' := \frac{e}{Z} t.
\end{align*}
Therefore, dropping the primes, System~\eqref{syst_complet} is written as:
\begin{align*}
\left\{\begin{array}{llll}
\displaystyle \dt h + \frac{1}{\epsilon} \dive(hv) = \frac{r}{e},\\
\displaystyle \dt (ch) + \frac{1}{\epsilon} \dive(chv) = \frac{\rho_s}{c_{sat}} \left( h^m |v|^n - \frac{s}{e} c \right),\\
\displaystyle \dt z = \widetilde K \Delta z - h^m |v|^n + \frac{s}{e} c,\\
\displaystyle v = -\frac{\mu}{V} \nabla (h+z).
\end{array} \right.
\end{align*}

Here, we defined $\epsilon := \frac{Le}{HV}$. If $\epsilon \ll 1$, as generally $\rho_s \gg c_ {sat}$, we can neglect the terms $\dt h$ and $\dt (ch)$ in the equation  $h$ et $ch$. Thus, assuming enough regularity on the solutions, we can write $\dive (chv) = hv. \nabla c + c \dive(hv) = hv. \nabla c$. Moreover we fix $V = \mu$ for simplicity. We obtain:
\begin{align}
	\left\{\begin{array}{llll}
	\displaystyle \dive(hv) = \widetilde r,\\
	hv. \nabla c = \frac{\rho_s}{c_{sat}} \left(h^m |v|^n - \frac{s}{e} c \right), \\
	\displaystyle \dt z = \widetilde K \Delta z - h^m |v|^n + \frac{s}{e} c, \\
    \displaystyle v = - \nabla (h+z).
	\end{array} \right.
     \label{eq_syst_statio}
\end{align}

\paragraph{Discretisation of the equations}

The constraint on $h$ at each time $\dive(h \nabla(h+z)) = 0$ leads to a fully nonlinear problem that may be hard to solve numerically. Instead, we discretize this equation as
\begin{align}
     \dive(h^{n-1} \nabla h^n) + \dive (h^n \nabla z^n) = -r,
\end{align}
where $h^n$ represents the fluid height at time $t_n=n\delta t$ whereas the surface height at time $n$, $z^n$, has been calculated by an explicit Euler method, using the solutions at time $n-1$, $h^{n-1}$ and $c^{n-1}$: $\forall 1 \leq i \leq N_x$, $1 \leq j \leq N_y$,
\begin{align*}
	\displaystyle \frac{z_{i,j}^n - z_{i,j}^{n-1}}{\ddt} = K \frac{z_{i+1,j}^{n-1}-2 z_{i-1,j}^{n-1}+z_{i,j}^{n-1}}{\ddx} + K \frac{z_{i,j+1}^{n-1}-2 z_{i,j-1}^{n-1}+z_{i,j}^{n-1}}{\ddy} - (E-S)_{i,j}^n.
\end{align*}
We discretise the equation on $h^n$ by a centered finite volume scheme, which writes:
\begin{align*}
	-r_{i,j} = & \frac{h_{i+1/2,j}^{n-1} (h_{i+1,j}^n-h_{i,j}^n) - h_{i-1/2,j}^{n-1} (h_{i,j}^n-h_{i-1,j}^n)}{\ddx^2} + \frac{h_{i,j+1/2}^{n-1} (h_{i,j+1}^n-h_{i,j}^n) - h_{i,j-1/2}^{n-1} (h_{i,j}^n-h_{i,j-1}^n)}{\ddy^2}\\
	&+ \frac{h_{i+1/2,j}^{n} (z_{i+1,j}^n-z_{i,j}^n) - h_{i-1/2,j}^{n} (z_{i,j}^n-z_{i-1,j}^n)}{\ddx^2} + \frac{h_{i,j+1/2}^{n} (z_{i,j+1}^n-z_{i,j}^n) - h_{i,j-1/2}^{n} (z_{i,j}^n-z_{i,j-1}^n)}{\ddy^2},
\end{align*}
where $h_{i+1/2,j}^{n-1} = \frac{h_{i,j}^{n-1}+h_{i+1,j}^{n-1}}{2}$. Finally, Equation~\eqref{eq_c_lin} on $c$ is discretised by an upwind finite volume scheme, considering the variable $x$ as a time variable. The time discretisation is an explicit Euler scheme.
\begin{align*}
	\displaystyle \frac{c_{i+1,j} - c_{i,j}}{dt} + \frac{max(v_x[i,j], 0)}{v_y[i,j]} \displaystyle \frac{c_{i,j}-c_{i,j-1}}{\ddy} + \frac{min(v_x[i,j], 0)}{v_y[i,j]} \displaystyle \frac{c_{i,j}-c_{i,j+1}}{\ddy} = - \frac{\rho_s}{h_{i,j} v_x[i,j]} (E-S)_{i,j}^n.
\end{align*}

\paragraph{Boundary conditions}

We choose periodic boundary conditions in the $y$ direction. There remains two boundaries, the top and the bottom of the tilted plane. The boundary conditions for the soil height are Neumann conditions: $\forall 1 \leq j \leq N_y$, 
\begin{align*}
     z_{0,j}^n = z_{1,j}^n, \quad z_{N_x,j}^n = z_{N_{x+1},j}^n.
\end{align*}
The boundary conditions for the water height and for the concentration of sediments are a Dirichlet condition at the top because the incoming flow is fixed, and a free flow Neumann condition at the bottom.
\begin{align*}
     \left\{ \begin{array}{l}
          \displaystyle h_{0,j}^n = h_0, \quad h_{N_x,j}^n = h_{N_{x+1},j}^n,\\
          \displaystyle c_{0,j}^n = c_0, \quad c_{N_x,j}^n = c_{N_{x+1},j}^n.   
     \end{array} \right.
\end{align*}

\paragraph{Comparison with a scheme that solves the non stationary system}

In this paragraph, we drop the stationary assumption for the water height and sediment concentration, and compare the results to those where the stationary assumption is made. We solve System~\eqref{syst_complet}, with a finite volume scheme. A fully explicit scheme fails to solve the system, because the computed solution quickly blows up, even with a time step smaller than the one given by the CFL condition:
\begin{align*}
    \delta t \leq \frac{\delta x}{v_{water}}.
\end{align*}
Therefore, we uses a semi-explicit scheme: linear terms of the water height and sediment concentration equations are implicit, and we explicit a part of the non linear terms. The discretisation in time of equations on $h$ and $c$ is given by:
\begin{align}
	\left\{\begin{array}{llll}
	\displaystyle \frac{h^{k+1} - h^k}{\delta t} + \mu \displaystyle \dive(h^h \nabla h^{k+1} + h^{k+1} \nabla z^{k+1}) = 0,\\
	\displaystyle h^{k+1} \frac{c^{k+1} - c^k}{\delta t} + h^{k+1} v^{k+1} . \nabla c^{k+1} = e \left( \left(h^{k+1} \right)^m \left|v^{k+1} \right|^n \right) - \rho_s \, s \frac{c^{k+1}}{c_{sat}}, \\
    \displaystyle v^k = - \mu \nabla (h^k+z^k).
	\end{array} \right.
     \label{eq_syst_discr_time}
\end{align}
Discretisation in space is done by a finite volume scheme, the same as for the stationary System~\eqref{eq_syst_statio}. The numerical parameters are given by Table~\ref{tab_param}, and $\delta t = 0.1$s. 

\begin{figure}[h!]
	\centering
     \begin{subfigure}[b]{0.49\textwidth}
          \centering
          \includegraphics[width=\textwidth]{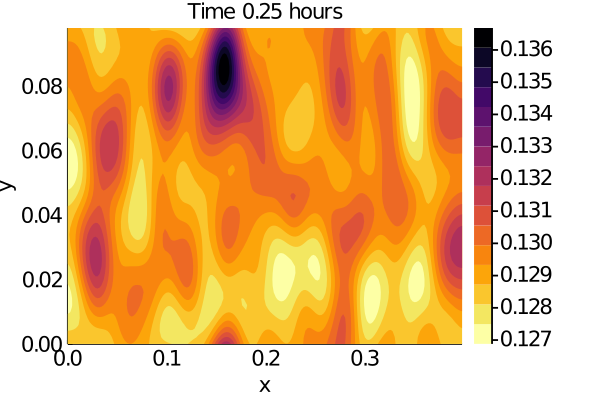}
          \caption{Non stationary scheme, 9000 iterations.}
          \label{stable_nonstatio}
     \end{subfigure}
     \begin{subfigure}[b]{0.49\textwidth}
          \centering
          \includegraphics[width=\textwidth]{sol_VFstatio_K=e_alpha=0_etape2000.png}
          \caption{Stationary scheme, 2000 iterations.}
          \label{stable_statio}
     \end{subfigure}
     \caption{Eroded height of the soil in $mm$, for $K=K_e$, at $T=2$s.}
     \label{simu_stable_nonstatio}
\end{figure}

Figure~\ref{simu_stable_nonstatio} shows the results of two simulations with the same parameters, the non stationary System~\eqref{syst_complet} in Figure~\ref{stable_nonstatio} and the stationary System~\eqref{eq_syst_statio} in Figure~\ref{stable_statio}. At $T=0.25$ hours, we can observe that these two simulations are almost the same. This result supports the fact that the approximation of the complete System~\eqref{syst_complet} by System~\eqref{eq_syst_statio}, where water is in a stationary regime is valid.

\section*{Declarations}

\paragraph{Competing interests}

No funding was received to assist with the preparation of this manuscript.

\paragraph{Availability of Data and Materials}

The code of numerical simulations generated during the current study is available at \url{https://github.com/juliebinard/landscape_evol_finite_volum}.

\paragraph{Acknowledgments}
PD holds a visiting professor association with the Department of Mathematics, Imperial College London.

\bibliographystyle{abbrv}
\bibliography{biblio.bib}

\end{document}